\documentclass[10pt]{article}
\usepackage{amsmath, amssymb ,amsthm, amsfonts, amsgen, color}
\usepackage{graphicx}
\usepackage{bbm}
\usepackage{tikz}
\usepackage[colorinlistoftodos]{todonotes}
\usepackage{hyperref}
\usepackage[bottom]{footmisc}

\numberwithin{equation}{section} 
\setlength{\textwidth}{6.3in}

\setlength{\textheight}{9in} \setlength{\oddsidemargin}{0in}
\setlength{\evensidemargin}{0in} \setlength{\topmargin}{-0.5in}
\setlength{\textwidth}{6.2in}

\definecolor{vg}{rgb}{0.0, 0.26, 0.15}

\newcommand{\Nb}{{\mathbb{N}}}
\newcommand{\Rb}{{\mathbb{R}}}

\newcommand{\wsto}{\stackrel{*}{\rightharpoonup}}

\newcommand{\E}{{\mathcal{E}}}

\newcommand{\cH}{{\mathcal{H}}}

\newcommand{\LL}{{\mathcal{L}}}


\makeatletter
\def\rightharpoonupfill@{\arrowfill@\relbar\relbar\rightharpoonup}
\newcommand{\xrightharpoonup}[2][]{\ext@arrow
0359\rightharpoonupfill@{#1}{#2}} \makeatother

\def\e{{\varepsilon}}
\def\O{{\Omega}}

\def\E{{\cal E}}

\def\L{{\cal L}}

\def\E{{\exists}}

\def\E{{\cal E}}

\newtheorem{theorem}{Theorem}[section]
\newtheorem{lemma}[theorem]{Lemma}
\newtheorem{proposition}[theorem]{Proposition}

\newtheorem{remark}[theorem]{Remark}
\newtheorem{definition}[theorem]{Definition}

\newcommand{{\rr}}{{\mathbb R}}

\newenvironment{@abssec}[1]{%
     \if@twocolumn
       \section*{#1}%
     \else
       \vspace{.05in}\footnotesize
       \parindent .2in
         {\upshape\bfseries #1. }\ignorespaces
     \fi}
     {\if@twocolumn\else\par\vspace{.1in}\fi}

\begin{document}

\title{\sc Relaxation for an optimal design problem in $BD(\O)$}

\author{{\sc Ana Cristina Barroso}\\
Departamento de Matem\'atica and CMAFcIO \\ 
Faculdade de Ci\^encias da Universidade de Lisboa\\ 
Campo Grande, Edif\' \i cio C6, Piso 1\\
1749-016 Lisboa, Portugal\\
acbarroso@ciencias.ulisboa.pt\\
 \\
{\sc Jos\'e Matias}\\
Departamento de Matem\'{a}tica and CAMGSD\\
Instituto Suprior T\'{e}cnico\\
Av. Rovisco Pais 1\\
1049-001 Lisboa, Portugal\\
jose.c.matias@tecnico.ulisboa.pt\\
\\
and
\\
\\
{\sc Elvira Zappale} \\
Dipartimento di Scienze di Base e Applicate per l'Ingegneria\\
Sapienza - Universit\`a di Roma\\
Via Antonio Scarpa, 16\\
00161 Roma (RM), Italy\\
elvira.zappale@uniroma1.it}
\maketitle

\begin{abstract}
We obtain a measure representation for a functional arising in the context of optimal design problems under linear growth conditions. The functional in question corresponds to the relaxation with respect to a pair $(\chi,u)$, where $\chi$ is the characteristic function of a set of finite perimeter and $u$ is a function of bounded deformation, of an energy with a bulk term depending on the symmetrised gradient as well as a perimeter term. 
\end{abstract}

\smallskip

{\bf MSC (2020)}: 49J45, 49Q10

{\bf Keywords}: (special) fields of bounded deformation, optimal design, sets of finite perimeter, symmetric quasiconvexity

\section{Introduction}

In optimal design one aims to find an optimal shape which minimises a cost functional. The optimal shape is a subset $E$ of a bounded, open set 
$\Omega \subset \mathbb R^N$ which is described by its characteristic function 
$\chi: \Omega \to \{0,1\}$, $E = \{\chi= 1\}$, and, in the linear elasticity framework, the cost functional is usually a quadratic energy, so we are lead to the problem
\begin{equation}\label{orpb} 
\displaystyle \min_{(\chi,u)} \int_{\Omega}\chi(x) W_1({\cal E} u(x)) 
	+ (1 - \chi(x))W_0(\E u(x)) \, dx,
\end{equation}
where $W_0$ and $W_1$ are two {\em elastic} densities, with $W_0 \geq W_1$, and $\E u$ denotes the symmetrised gradient of the displacement $u$. We refer to the seminal papers \cite{AL, KS1, KS2, KS3, MT}, among a wide literature (see, for instance, the recent contributions \cite{BIR} and \cite{BIR2}).

However, as soon as plasticity comes into play, the
observed stress-strain relation is no longer linear and, due to the linear growth of the stored elastic energy and to the lack of reflexivity of the space $L^1$, a suitable functional space is necessary to account for fields $u$ whose strains are measures. The space of special fields with bounded deformation, $BD(\Omega)$, was first proposed in \cite{S1}-\cite{S4}
and starting from these pioneering papers a vast literature developed. 

Indeed, already in the case where $\chi\equiv \chi_\Omega$,
the search for equilibria in the context of perfect plasticity leads naturally to the study of lower semicontinuity properties, and eventually relaxation, for energies of the type
\begin{equation}\label{calE}
	\int_{\Omega}f(\E u(x)) \, dx
\end{equation}
where 
$f$ is the volume energy density. As mentioned above, $u$ belongs to the space $BD(\Omega)$ of functions of bounded deformation composed of integrable vector-valued functions for which all components $E_{ij}$, $i,j = 1, \ldots,N$, of the deformation tensor $Eu := \frac{Du + Du^T}{2}$ are bounded Radon measures and 
$\E u$ stands for the absolutely continuous part, with respect to the Lebesgue measure, of the symmetrised distributional derivative $Eu$. 

Lower semicontinuity for \eqref{calE} was established in \cite{BCDM} under convexity assumptions on $f$ and in \cite{E} for symmetric quasiconvex integrands, under linear growth conditions and for $u \in LD(\Omega)$, the subspace of $BD(\Omega)$ comprised of functions for which the singular part $E^su$ of the measure $Eu$ vanishes. For a symmetric quasiconvex density $f$ with an explicit dependence on the position in the body and satisfying superlinear growth assumptions, lower semicontinuity properties were established in \cite{E2} for $u \in SBD(\Omega)$.

In the case where the energy density takes the form $\|\E u\|^2$ or 
$\|\E^D u\|^2 + (\text{div } u)^2$ (where $A^D$ stands for the deviator of the 
$N \times N$ matrix $A$ given by $A^D := A - \frac{1}{N}\text{tr} (A)I$), and the total energy also includes a surface term, a first relaxation result was proved in \cite{BDV}. We also refer to  \cite{KR} for the relaxation in the case where there is no surface energy and to \cite{Mo}, \cite{JS},  and \cite{MMS} for related models concerning evolutions and homogenization, among a wider list of contributions.

For general energy densities $f$, Barroso, Fonseca $\&$ Toader \cite{BFT} studied the relaxation of \eqref{calE} for $u \in SBD(\Omega)$ under linear growth conditions but placing no convexity assumptions on $f$. They showed that the relaxed functional admits an integral representation where a surface energy term arises naturally. The global method for relaxation due to Bouchitté, Fonseca $\&$ Mascarenhas \cite{BFM} was used to characterise the density of this term, whereas the identification of the relaxed bulk energy term relied on the blow-up method  \cite{FM1} together with a Poincaré-type inequality.

Ebobisse $\&$ Toader \cite{ET} obtained an integral representation result for general local functionals defined in $SBD(\Omega)$ which are lower semicontinous with respect to the $L^1$ topology and satisfy linear growth and coercivity conditions. The functionals under consideration are restrictions of Radon measures and are assumed to be invariant with respect to rigid motions. Their work was extended to the space $SBD^p(\Omega)$, $p > 1$, which arises in connection with the study of fracture and damage models, by Conti, Focardi $\&$ Iurlano \cite{CFI} in the 2-dimensional setting. A crucial and novel ingredient of their proof is the construction of a $W^{1,p}$ approximation of an $SBD^p$ function $u$ using finite-elements on a countable mesh which is chosen according to $u$ (recall that $SBD^p$ denotes the space of fields with bounded deformation such that the symmetrised gradient is the sum of an $L^p$ field and a measure supported on a set of finite $\mathcal H^{N-1}$ measure).

The analysis of an integral representation for a variational functional satisfying lower semicontinuity, linear growth conditions and the usual measure theoretical properties, was extended to the full space $BD(\Omega)$ by Caroccia, Focardi $\&$ Van Goethem \cite{CFVG}. In this work, the invariance of the studied functional with respect to rigid motions, required in \cite{ET}, is replaced by a weaker condition stating continuity with respect to infinitesimal rigid motions. Their result relies, as in papers mentioned above, on the global method for relaxation, as well as on the characterisation of the Cantor part of the measure $Eu$, due to De Philippis $\&$ Rindler \cite{DPR0}, which extends to the $BD$ case the result of Alberti's rank-one theorem in $BV$. 

In the study of the minimisation problem \eqref{orpb} one usually prescribes the volume fraction of the optimal shape, leading to a constraint of the form
$\displaystyle \frac{1}{{\mathcal L}^N(\Omega)}\int_\Omega \chi(x) dx= \theta,  \;\theta \in (0,1)$. It is sometimes convenient to replace this constraint 
by inserting, instead, a Lagrange multiplier in the modelling functional
which, in the optimal design context, becomes
 \begin{equation}\label{functnoper}F(\chi,u;\Omega) : = 
 \displaystyle \int_{\Omega}\chi(x) W_1({\cal E} u(x)) 
 + (1 - \chi(x))W_0(\E u(x)) + \int_\Omega k \chi(x) dx. 
 \end{equation}

Despite the fact that we have compactness for $u$ in $BD(\Omega)$ for functionals of the form \eqref{functnoper}, it is well known that the problem
of minimising \eqref{functnoper} with respect to $(\chi,u)$, adding suitable forces and/or boundary conditions, is ill-posed, in the sense that
minimising sequences $\chi_n \in L^\infty(\Omega;\{0,1\})$ tend to highly oscillate and develop microstructure, so that in the limit we may no longer obtain a characteristic function. To avoid this phenomenon, as in \cite{ABper} and \cite{KL}, we add a perimeter penalisation along the interface between the two zones 
$\{\chi = 0\}$ and $\{\chi = 1\}$ (see \cite{CZ} for the analogous analysis performed in $BV$, and \cite{CZ0, BZ, BZ2} for the Sobolev settings, also in the presence of a gap in the growth exponents). 
 
Thus, with an abuse of notation (i.e. denoting $W_1+ k$, in \eqref{functnoper}, still by $W_1$), our aim in this paper is to study the energy functional given by
 \begin{equation}\label{Fint}
 	F(\chi,u;\Omega) : = 
 	\displaystyle \int_{\Omega}\chi(x) W_1({\cal E} u(x)) 
 	+ (1 - \chi(x))W_0(\E u(x)) \, dx + |D\chi|(\Omega), 
 \end{equation}
 where $u \in BD(\Omega)$, $\chi \in BV(\Omega;\{0,1\})$ and the densities $W_i$, $i = 0,1$, are continuous functions satisfying the following linear growth conditions from above and below,
 \begin{equation}\label{growthint}
 	\exists \, \alpha, \beta > 0 \text{ such that }
 	\alpha |\xi| \leq W_i(\xi) \leq \beta (1 + |\xi|), \; \; \forall 
 	\xi \in \Rb^{N \times N}_s.
 \end{equation}
 We point out that no convexity assumptions are placed on $W_i$, $i = 0,1$.
 
To simplify the notation, in the sequel, we let 
$f: \{0,1\} \times \mathbb R^{N \times N}_s \to [0,+\infty)$
be defined as
\begin{equation}\label{densityint}
f\left( q,\xi\right)  :=q  W_1(\xi)+ (1-q)W_0(\xi), 
\end{equation}
and for a fixed $q \in\{0,1\}$, we recall that the recession function of $f$, in its second argument, is given by
\begin{equation}\label{recS}
f^{\infty}(q,\xi) := \limsup_{t\rightarrow+\infty}\frac{f(q,t\xi)}{t}.
\end{equation}

Since we place no convexity assumptions on $W_i$, we consider
the relaxed localised functionals arising from the energy \eqref{Fint}, 
defined, for an open subset $A \subset \Omega$, by 
\begin{align}\label{calFint}
\mathcal{F}\left(\chi,u;A\right)   &  
:=\inf\bigg\{ \liminf_{n\rightarrow +\infty} F(\chi_n,u_n;A): 
u_{n} \in W^{1,1}(A;\mathbb{R}^{N}),
\chi_{n} \in BV(A;\{0,1\}), \\
&  \hspace{4cm} 
u_{n}\to u\text{ in }
L^{1}(A;\mathbb{R}^{N}),
\chi_{n}\to\chi\text{ in }
L^1(A;\{0,1\})  \bigg\},\nonumber
\end{align}
and
\begin{align}\label{calFLDint}
\mathcal{F}_{LD}\left(\chi,u;A\right)   &  
:=\inf\bigg\{ \liminf_{n\rightarrow +\infty} F(\chi_n,u_n;A): 
u_{n} \in LD(A), \chi_{n} \in BV(A;\{0,1\}), \\
&  \hspace{4cm} 
u_{n}\to u\text{ in }
L^{1}(A;\mathbb{R}^{N}),
\chi_{n}\to\chi\text{ in }
L^1(A;\{0,1\})  \bigg\},\nonumber
\end{align}
where $LD(\Omega):=\{u \in BD(\Omega): E^s u=0\}$.

Due to the expression of \eqref{Fint}, and to the fact that 
$\chi_n \overset{\ast}{\rightharpoonup}\chi$ in $BV$ if and only if $\{\chi_n\}$ is uniformly bounded in $BV$ and $\chi_n \to \chi$ in $L^1$, it is equivalent to take
$\chi_n \overset{\ast}{\rightharpoonup}\chi$ in $BV$ or $\chi_n \to \chi$ in $L^1$
in the definitions of the functionals \eqref{calFint} and \eqref{calFLDint}, obtaining for each of them the same infimum regardless of the considered convergence.

As a simple consequence of the density of smooth functions in $LD(\Omega)$ we show in Remark~\ref{remkW11LD} that, under the above growth conditions on $W_0, W_1$, 
$$\mathcal{F}\left(\chi,u;A\right) = \mathcal{F}_{LD}\left(\chi,u;A\right), \;
\mbox{for every} \; \chi \in BV(A;\left\{0,1\right\}), u \in BD(\Omega),
A \in {\cal O}(\Omega).$$
We prove in Proposition~\ref{traceRm} that $\mathcal{F}\left(\chi,u;\cdot\right)$ is the restriction to the open subsets of $\Omega$ of a Radon measure, the main result of our paper concerns the characterisation of this measure.

\begin{theorem}\label{main}
Let $f:\{0,1\} \times \mathbb R^{N \times N}_s\to [0, + \infty)$ be a continuous function as in \eqref{densityint}, where $W_0$ and $W_1$ satisfy \eqref{growthint}, and consider 
$F:BV(\Omega;\{0,1\})\times BD(\Omega)\times \mathcal O(\Omega)$ 
defined in \eqref{Fint}. Then
\begin{align}\label{F11}
\mathcal{F}\left(  \chi,u;A\right)&=\int_A SQf(\chi(x), \E u(x)) \, dx + 
\int_{A \cap (J_\chi \cup J_u)} 
g(x,\chi^+(x),\chi^-(x),u^+(x),u^-(x),\nu(x))\, d\cH^{N-1}(x) \nonumber \\
& + \int_A(SQf)^{\infty}(\chi(x), \frac{d E^c u}{d |E^c u|}(x)) \, d|E^c u|(x),
\end{align}
where $SQf$ is the symmetric quasiconvex envelope of $f$ and 
$(SQf)^{\infty}$ is its recession function (cf. Subsection~\ref{defqcx} and \eqref{recS}, respectively). The relaxed surface energy density is given by
$$
g(x_0,a,b,c,d,\nu) := \limsup_{\varepsilon \to 0^+}
\frac{m(\chi_{a,b,\nu}(\cdot - x_0),u_{c,d,\nu}(\cdot - x_0);Q_\nu(x_0,\e))}
{\e^{N-1}}
$$
where $Q_{\nu}(x_0,\e)$ stands for an open cube with centre $x_0$, sidelength $\e$ and two of its faces parallel to the unit vector $\nu$, 
$$m(\chi,u;V) := \inf\left\{\mathcal F (\theta,v;V) : 
\theta \in  BV(\O;\{0,1\}), v \in BD(\O), \theta = \chi \mbox{ on } \partial V,
v = u \mbox{ on } \partial V\right\},$$
for any $V$ open subset of $\Omega$ with Lipschitz boundary,
and, for $(a,b,c,d,\nu)  \in \{0,1\} \times \{0,1\}  \times \mathbb{R}^{N} \times\mathbb{R}^{N} \times S^{N-1},$ the functions $\chi_{a,b,\nu}$ and
$u_{c,d,\nu}$ are defined as
\begin{equation*}
\chi_{a,b,\nu}(y) := \begin{cases}
a, & {\rm if } \; y \cdot \nu > 0\\
b, & {\rm if } \; y \cdot \nu < 0
\end{cases}
\; \; \; {\rm and } \; \; \;
u_{c,d,\nu}(y) := \begin{cases}
c, & {\rm if } \; y \cdot \nu > 0\\
d, & {\rm if } \; y \cdot \nu < 0.
\end{cases}
\end{equation*}
 \end{theorem}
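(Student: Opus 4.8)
The plan is to prove the measure representation by the now-standard three-pronged strategy adapted to the mixed pair $(\chi,u)$ in $BD$, combining the global method for relaxation of Bouchitté--Fonseca--Mascarenhas with the blow-up method of Fonseca--Müller. First I would establish the auxiliary structural facts: that $\F(\chi,u;\cdot)$ is the trace of a Radon measure absolutely continuous with respect to $\mathcal L^N + |E^c u| + \cH^{N-1}\res(J_\chi\cup J_u)$ (this is Proposition~\ref{traceRm}), together with the equivalence $\F = \F_{LD}$ from Remark~\ref{remkW11LD}. One also needs the basic lower and upper bounds $\alpha|Eu|(A) + |D\chi|(A) \le \F(\chi,u;A) \le \beta(\mathcal L^N(A) + |Eu|(A)) + |D\chi|(A)$, obtained from \eqref{growthint} by mollification on the upper side and by symmetric quasiconvexity (Jensen-type inequality) on the lower side, so that the relevant measures share the same null sets and the Radon--Nikodym derivatives exist $\mathcal L^N$-, $|E^c u|$- and $\cH^{N-1}$-a.e.

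Next I would identify the bulk density. Fix a point $x_0$ where the Radon--Nikodym derivative $\frac{d\F(\chi,u;\cdot)}{d\mathcal L^N}(x_0)$ exists, where $x_0$ is a Lebesgue point of $\chi$ and an approximate differentiability point of $u$ with $\E u(x_0)$ the approximate symmetrised gradient, and where the singular parts of $Eu$ and $D\chi$ have zero density. Blowing up at scale $\varepsilon\to 0$, rescaling $u_\varepsilon(y) = \varepsilon^{-1}(u(x_0+\varepsilon y) - u(x_0) - \E u(x_0)[\varepsilon y])$ and $\chi_\varepsilon(y) = \chi(x_0+\varepsilon y)$, one shows the limit of the rescaled functionals is bounded below by $SQf(\chi(x_0),\E u(x_0))$, using the definition of the symmetric quasiconvex envelope together with a cut-off/De~Giorgi slicing argument in $LD$ to correct boundary values without increasing the energy too much, plus the fact that $\chi_\varepsilon\to\chi(x_0)$ in $L^1$ so the perimeter contribution vanishes in the blow-up. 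For the matching upper bound one uses a recovery sequence built from the affine map plus a symmetric quasiconvexity test field, with $\chi$ held at its constant value. The Cantor part is handled analogously at a point $x_0$ where $\frac{dE^cu}{d|E^cu|}(x_0)$ exists: by the De~Philippis--Rindler result this derivative is (up to sign) a symmetrised rank-one tensor $a\odot\nu$, and a blow-up combined with the positive $1$-homogeneity and the structure of $(SQf)^\infty$ yields the density $(SQf)^\infty(\chi(x_0), \frac{dE^cu}{d|E^cu|}(x_0))$; here the key point is that near a Cantor point $\chi$ is still approximately constant so the perimeter term again does not contribute.

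Then I would characterise the surface density via the global method: at $\cH^{N-1}$-a.e. point $x_0\in J_\chi\cup J_u$ the Radon--Nikodym derivative of $\F(\chi,u;\cdot)$ with respect to $\cH^{N-1}\res(J_\chi\cup J_u)$ equals $g(x_0,\chi^+(x_0),\chi^-(x_0),u^+(x_0),u^-(x_0),\nu(x_0))$, where $g$ is the minimisation value $m$ defined in the statement, rescaled by $\varepsilon^{N-1}$. This follows from the abstract scheme of \cite{BFM} once one verifies its structural hypotheses for our functional: locality, the measure property, the growth bounds, and a suitable almost-subadditivity/compatibility with respect to the class of competitors; the fact that $\chi^\pm, u^\pm$ and $\nu$ are the only relevant jump data is ensured because, on the one-dimensional slices, a $BD$ (resp. $BV$) function is determined at a jump point by its two traces and the normal. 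One must check that the blow-up limits of $(\chi,u)$ at such a point are precisely the piecewise-constant profiles $\chi_{a,b,\nu}(\cdot-x_0)$, $u_{c,d,\nu}(\cdot-x_0)$, which uses the fine properties of $BD$ jump sets (that $\cH^{N-1}$-a.e. point of $J_u$ is a point of approximate jump with a well-defined normal) and the corresponding statement for $\chi$, together with the fact that $\cH^{N-1}$-a.e. point of $J_\chi\cup J_u$ is a point of positive density of at most one of the two jump sets or a common jump point. I expect the main obstacle to be precisely this surface analysis: reconciling the two independent jump sets $J_\chi$ and $J_u$ (which need not coincide), producing competitors in $m$ that simultaneously match the constant value of $\chi$ and the linear-trace behaviour of $u$ on $\partial Q_\nu$ while keeping the perimeter term controlled, and carrying out the gluing/slicing in $BD$ (where one only controls the symmetrised gradient, so the Poincaré-type inequality in $BD$ and careful use of the $LD$-density are needed) without losing the sharp constant. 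The bulk and Cantor parts, by contrast, are comparatively routine once the blow-up machinery and the De~Philippis--Rindler theorem are in place.
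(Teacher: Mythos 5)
Your overall architecture --- Radon-measure property, blow-up for the bulk and Cantor densities, global method for the surface density --- coincides with the paper's, and your description of the surface analysis is essentially Proposition~\ref{mg} together with Lemmas~\ref{BFT310}--\ref{BFT312}. But there is a genuine gap in the bulk and Cantor lower bounds: you never explain how to replace $f(\chi_n(x),\E u_n(x))$ by $f(\chi(x_0),\E u_n(x))$ along the blown-up minimising sequences. Saying that $\chi_\e\to\chi(x_0)$ in $L^1$ ``so the perimeter contribution vanishes'' misses the real obstruction: by \eqref{G} the substitution error is of order $\int|\chi_n-\chi(x_0)|\,(1+|\E u_n|)\,dx$, and $\{\E u_n\}$ is only bounded in $L^1$, so it may concentrate precisely on the small set where $\chi_n\neq\chi(x_0)$; $L^1$-convergence of $\chi_n$ alone gives no control of this product. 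The paper's Propositions~\ref{lbbulk} and \ref{lbCantor} resolve this with Chacon's Biting Lemma (equiintegrability of the strains off the bited sets $D_r$, a diagonal choice of $k(j)$, $r_j$, and a convergence-in-measure argument for $\chi_k$), following \cite{MMZ}; this decoupling of $\chi$ from the possibly concentrating strains is the technical heart of the paper, which stresses that the explicit $\chi$-dependence is exactly what prevents quoting existing $BD$ relaxation results directly.

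A second, related discrepancy concerns the Cantor term. You propose a self-contained blow-up at Cantor points based on the De Philippis--Rindler rank-one structure and positive $1$-homogeneity; the paper instead freezes $\chi$ (again via the biting argument) and reduces to the auxiliary functionals $\mathcal F_0,\mathcal F_1$ with constant first argument, whose Cantor densities are supplied by the integral representation of Caroccia, Focardi and Van Goethem \cite{CFVG} (this is where \cite{DPR0} enters, indirectly). Carrying out your direct blow-up would amount to reproving a substantial part of that representation (lower semicontinuity and relaxation at Cantor points for linear-growth symmetric-quasiconvex integrands on the full space $BD$), which is far from routine; so labelling the bulk and Cantor parts ``comparatively routine'' misplaces the difficulty. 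With the Biting Lemma decoupling and the reduction to \cite{CFVG} added, your outline would match the paper's proof.
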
 

For the notation regarding the jump sets $J_\chi$, $J_u$ and the corresponding vectors
$\chi^+(x)$, $\chi^-(x)$, $\nu_\chi(x)$, $u^+(x)$, $u^-(x)$ and $\nu_u(x)$ we refer to Subsections~\ref{BV}, \ref{BD} and \ref{surface}.

The above expression for the relaxed surface energy density arises as an application of the global method for relaxation \cite{BFM}. However, as we will see in Subsection~\ref{surface}, in the case where $f$ satisfies the additional hypothesis \eqref{finfty}, this density can be described more explicitly, leading to an integral representation for \eqref{calFint}, in the $BD$ setting, entirely similar to the one in $BV$, obtained in \cite{CZ}, when $W_0$ and $W_1$ depend on the whole gradient $\nabla u$. Indeed, under this assumption, we show that
$$g(x_0,a,b,c,d,\nu) = K(a,b,c,d,\nu)$$
where
\begin{equation}\label{KSQ}
K(a,b,c,d,\nu):=\inf\left\{  
	\displaystyle\int_{Q_{\nu}}
	(SQf)^{\infty}(\chi(x),\E u(x)) \, dx+|D\chi|(Q_{\nu}):\left(\chi,u\right)
\in\mathcal{A}(a,b,c,d,\nu)\right\}, 
\end{equation}
and, for $(a,b,c,d,\nu)  \in \{0,1\} \times \{0,1\}  \times \mathbb{R}^{N} \times\mathbb{R}^{N} \times S^{N-1},$ the set of admissible functions is
\begin{align*}
\mathcal{A}(a,b,c,d,\nu)   &  :=\bigg\{  \left(\chi,u\right)
\in BV_{\rm loc}\left(S_{\nu};\{0,1\}\right)  \times 
W^{1,1}_{\rm loc}\left(S_{\nu};\mathbb{R}^{N}\right)  : \nonumber \\
&   (\chi(y),u(y)) = (a,c) \text{ if } y\cdot\nu=\frac{1}{2},
~(\chi(y),u(y)) = (b,d) \text{ if } y\cdot\nu=-\frac{1}{2},\nonumber\\
&   (\chi, u)\text{ are 1-periodic in the directions of }\nu_{1},\dots,\nu_{N-1}\bigg\},
\end{align*}
$\left\{\nu_{1},\nu_{2},\dots,\nu_{N-1},\nu\right\}$ is an orthonormal
basis of $\mathbb{R}^{N}$ and $S_\nu$ is the strip given by
$$S_\nu = \left\{x \in \Rb^N : |x \cdot \nu| < \frac{1}{2}\right\}.$$

As an application of the result of Caroccia, Focardi $\&$ Van Goethem, obtained in the abstract variational functional setting in \cite{CFVG}, the authors proved an integral representation for the relaxed functional, defined in $BD(\Omega) \times \mathcal O(\Omega)$,
$$
\mathcal F_{0}(u; A) := \inf \Big \{\liminf_{n \to + \infty} 
F_0(u_n;A) :  u_n \in W^{1, 1}(A; \Rb^N), u_n \to u \; {\rm in} \; L^1(A;\Rb^N)
\Big \},$$
where
$$F_0(u;A) :=\begin{cases}
\displaystyle \int_A f_{0} (x,u(x),\E u_n(x)) \, dx, & {\rm if} \; u \in W^{1,1}(\Omega;\Rb^N)\\
+\infty, & {\rm otherwise}
\end{cases}$$
and the density $f_0$ satisfies linear growth conditions from above and below
$$\frac{1}{C}|A| \leq f_0(x,u,A) \leq C\left(1 + |A|\right), \; \forall (x,u,A) \in
\Omega \times \Rb^N \times \Rb^{N \times N}_s$$
as well as a continuity condition with respect to $(x,u)$. This generalises to the full space $BD(\Omega)$, and to the case of densities $f_0$ depending explicitly on $(x,u)$, the results obtained in \cite{BFT}. We will make use of their work in Subsection~\ref{Cantor} to prove both lower and upper bounds for the density of the Cantor part of the measure 
$\mathcal F(\chi,u;\cdot)$, by means of an argument based on Chacon's Biting Lemma which allows us to fix $\chi$ at an appropriately chosen point $x_0$, as in \cite{MMZ}.

The contents of this paper are organised as follows. In Section~\ref{prelim}
we fix our notation and provide some results pertaining to $BV$ and $BD$ functions and notions of quasiconvexity 
which will be used in the sequel.
Section~\ref{auxres} contains some auxiliary results which are needed to prove our main theorem. In particular, in Proposition~\ref{traceRm} we show that $\mathcal F(\chi,u;\cdot)$ is the restriction to the open subsets of $\Omega$ of a Radon measure $\mu$. Section~\ref{mainthm} is dedicated to the proof of our main theorem, which characterises this measure. In each of Subsections~\ref{bulk}, \ref{Cantor} and \ref{surface} we prove lower and upper bounds of the densities of $\mu$ with respect to the bulk and Cantor parts of $Eu$, as well as with respect to a surface measure which is concentrated on the union of the jump sets of $\chi$ and $u$.

The fact that our functionals have an explicit dependence on the $\chi$ field
prevented us from applying existing results (such as \cite{ARDPR} and \cite{BDG})  directly and required us to obtain direct proofs.

\section{Preliminaries}\label{prelim}

In this section we fix notations and quote some definitions and 
results that will be used in the sequel.

Throughout the text $\Omega \subset \mathbb R^{N}$ will denote an open, bounded set
with Lipschitz boundary.

We will use the following notations:
\begin{itemize}
\item ${\mathcal B}(\Omega)$, ${\mathcal O}(\Omega)$ and ${\mathcal O}_{\infty}(\Omega)$ represent the families of all Borel, open and open subsets of $\Omega$ with Lipschitz boundary, respectively;
\item $\mathcal M (\Omega)$ is the set of finite Radon
measures on $\Omega$;
\item $\left |\mu \right |$ stands for the total variation of a measure  $\mu\in \mathcal M (\Omega)$; 
\item $\mathcal L^{N}$ and $\mathcal H^{N-1}$ stand for the  $N$-dimensional Lebesgue measure 
and the $\left(  N-1\right)$-dimensional Hausdorff measure in $\mathbb R^N$, respectively;
\item the symbol $d x$ will also be used to denote integration with respect to $\mathcal L^{N}$;
\item the set of symmetric $N \times N$ matrices is denoted by $\Rb_s^{N\times N}$;
\item given two vectors $a, b \in \Rb^N$, $a \odot b$ is the symmetric $N \times N$ matrix defined by $a \odot b : = \dfrac{a \otimes b + b \otimes a}{2}$, where $\otimes$ indicates tensor product;
\item $B(x, \e)$ is the open ball in $\Rb^N$ with centre $x$ and radius $\e$, 
$Q(x,\e)$ is the open cube in $\Rb^N$ with two of its faces parallel to the unit vector $e_N$, centre $x$ and sidelength $\e$, whereas
$Q_{\nu}(x,\e)$ stands for a cube with two of its faces parallel to the unit vector $\nu$; when $x=0$ and $\e = 1$, $\nu=e_N$ we simply write $B$ and $Q$;
\item $S^{N-1} := \partial B$ is the unit sphere in $\Rb^N$;
\item $C_c^{\infty}(\Omega; \Rb^N)$ and $C_{\rm per}^{\infty}(\Omega; \Rb^N)$ are the spaces of $\Rb^N$-valued smooth functions with compact support in $\Omega$ and
smooth and $Q$-periodic functions from $Q$ to $\Rb^N$, respectively;
\item by 
$\displaystyle \lim_{\delta, n}$ we mean 
$\displaystyle \lim_{\delta \to 0^+} \lim_{n\to +\infty}$,
$\displaystyle \lim_{k, n}$ means 
$\displaystyle \lim_{k \to +\infty} \lim_{n\to +\infty}$;
\item  $C$ represents a generic positive constant that may change from line to line.
\end{itemize}

\subsection{BV Functions and Sets of Finite Perimeter}\label{BV}

In the following we give some preliminary notions regarding functions of bounded variation and sets of finite perimeter. For a detailed treatment we refer to \cite{AFP}.

\smallskip

Given $u \in L^1(\Omega; \Rb^d)$ we let $\O _u$ be the set of Lebesgue points of $u$,
i.e., $x\in \O _u$ if there exists $\widetilde u(x)\in {\mathbb{R}}^d$ such
that 
\begin{equation*}
\lim_{\varepsilon\to 0^+} \frac{1}{\e^N}\int_{B(x,\varepsilon)}
|u(y)-\widetilde u(x)|\,dy=0,
\end{equation*}
$\widetilde u(x)$ is called the approximate limit of $u$ at $x$.
The Lebesgue discontinuity set $S_u$ of $u$ is defined as 
$S_u := \Omega \setminus \O _u$. It is known that ${\mathcal{L}}^{N}(S_u) = 0$ and the function 
$x \in \Omega \mapsto \widetilde u(x)$, which coincides with $u$ $\L ^N$- a.e.
in $\O _u$, is called the Lebesgue representative of $u$.

The jump set of the function $u$, denoted by $J_u$, is the set of
points $x\in \O \setminus \O _u$ for which there exist 
$a, \,b\in {\mathbb{R}}^d$ and a unit vector $\nu \in S^{N-1}$, normal to $J_u$ at $x$, such that $a\neq b$ and 
\begin{equation*}  
\lim_{\varepsilon \to 0^+} \frac {1}{\varepsilon^N} \int_{\{ y \in
B(x,\varepsilon) : (y-x)\cdot\nu > 0 \}} | u(y) - a| \, dy = 0,
\qquad
\lim_{\varepsilon \to 0^+} \frac {1}{\varepsilon^N} \int_{\{ y \in
B(x,\varepsilon) : (y-x)\cdot\nu < 0 \}} | u(y) - b| \, dy = 0.
\end{equation*}
The triple $(a,b,\nu)$ is uniquely determined by the conditions above  
up to a permutation of $(a,b)$ and a change of sign of $\nu$
and is denoted by $(u^+ (x),u^- (x),\nu_u (x)).$ The jump of $u$ at $x$ is defined by
$[u](x) : = u^+(x) - u^-(x).$

\smallskip

We recall that a function $u\in L^{1}(\Omega;{\mathbb{R}}^{d})$ is said to be of bounded variation, and we write $u\in BV(\Omega;{\mathbb{R}}^{d})$, if
all its first order distributional derivatives $D_{j}u_{i}$ belong to 
$\mathcal{M}(\Omega)$ for $1\leq i\leq d$ and $1\leq j\leq N$.

The matrix-valued measure whose entries are $D_{j}u_{i}$ is denoted by $Du$
and $|Du|$ stands for its total variation.
The space $BV(\O ; {\mathbb{R}}^d)$ is a Banach space when endowed with the norm 
\begin{equation*}
\|u\|_{BV(\O ; {\mathbb{R}}^d)} = \|u\|_{L^1(\O ; {\mathbb{R}}^d)} + |Du|(\O )
\end{equation*}
and we observe that if $u\in BV(\Omega;\mathbb{R}^{d})$ then $u\mapsto|Du|(\Omega)$ is lower semicontinuous in $BV(\Omega;\mathbb{R}^{d})$ with respect to the
$L_{\mathrm{loc}}^{1}(\Omega;\mathbb{R}^{d})$ topology.

By the Lebesgue Decomposition Theorem, $Du$ can be split into the sum of two
mutually singular measures $D^{a}u$ and $D^{s}u$, the absolutely continuous
part and the singular part, respectively, of $Du$ with respect to the
Lebesgue measure $\mathcal{L}^N$. By $\nabla u$ we denote the 
Radon-Nikod\'{y}m derivative of $D^{a}u$ with respect to $\mathcal{L}^N$, so that we
can write 
\begin{equation*}
Du= \nabla u \mathcal{L}^N \lfloor \O + D^{s}u.
\end{equation*}

If $u \in BV(\O )$ it is well known that $S_u$ is countably $(N-1)$-rectifiable, see \cite{AFP},  
and the following decomposition holds 
\begin{equation*}
Du= \nabla u \mathcal{L}^N \lfloor \O + [u] \otimes \nu_u {\mathcal{H}}^{N-1}\lfloor S_u + D^cu,
\end{equation*}
\noindent where $D^cu$ is the Cantor part of the
measure $Du$.

If $\Omega$ is an open and bounded set with Lipschitz boundary then the
outer unit normal to $\partial \Omega$ (denoted by $\nu$) exists ${\mathcal{H}}^{N-1}$-a.e. and the trace for functions in $BV(\Omega;{\mathbb{R}}^d)$ is
defined.

\begin{theorem}
(Approximate Differentiability)\label{approxdiff} 
If $u \in BV(\Omega; {\mathbb{R}}^d),$ then for $\mathcal{L}^N$-a.e. $x \in\Omega$ 
\begin{equation}\label{apdif}
\lim_{\varepsilon \rightarrow 0^+} \frac{1}{\varepsilon^{N+1} }
\int_{Q(x, \varepsilon)} |u(y) - u(x) - \nabla u(x).(y-x)|\, dy  = 0.
\end{equation}
\end{theorem}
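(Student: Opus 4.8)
The statement to prove is the Approximate Differentiability Theorem for $BV$ functions: for $u \in BV(\Omega;\Rb^d)$ and $\L^N$-a.e.\ $x \in \Omega$, the rescaled averages
\begin{equation*}
\frac{1}{\e^{N+1}}\int_{Q(x,\e)} |u(y) - u(x) - \nabla u(x)\cdot(y-x)|\,dy
\end{equation*}
tend to $0$ as $\e \to 0^+$. The plan is the classical approach: combine the Lebesgue differentiation theorem for the measure $Du$, the Calder\'on--Zygmund structure of $Du = \nabla u\,\L^N + D^s u$, and a Poincar\'e-type inequality on small cubes. The heart of the matter is that at $\L^N$-a.e.\ point the singular part $D^s u$ is negligible at the infinitesimal scale while the absolutely continuous part $\nabla u\,\L^N$ is approximately constant.

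\textbf{Key steps.} First I would fix a point $x_0$ belonging to the (full-measure) set of points where: (i) $x_0$ is a Lebesgue point of $\nabla u$ with respect to $\L^N$, i.e.\ $\fint_{Q(x_0,\e)} |\nabla u(y) - \nabla u(x_0)|\,dy \to 0$; (ii) $x_0$ is a Lebesgue point of $u$ itself, with approximate limit $\widetilde u(x_0)$; and (iii) $|D^s u|(Q(x_0,\e))/\e^N \to 0$, which holds $\L^N$-a.e.\ since $|D^s u|$ is singular with respect to $\L^N$ (by the Besicovitch derivation theorem the upper $N$-density of $|D^s u|$ vanishes $\L^N$-a.e.). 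Next, for such $x_0$ consider the rescaled functions $u_\e(z) := \bigl(u(x_0 + \e z) - \widetilde u(x_0) - \e\,\nabla u(x_0)\cdot z\bigr)/\e$ on the unit cube $Q$. A change of variables shows $\nabla u_\e(z) = \nabla u(x_0+\e z) - \nabla u(x_0)$ on the absolutely continuous part and $D^s u_\e$ is the rescaling of $D^s u$, so $|Du_\e|(Q) = \fint_{Q(x_0,\e)}|\nabla u(y)-\nabla u(x_0)|\,dy + |D^su|(Q(x_0,\e))/\e^N \to 0$ by (i) and (iii). Then I would apply the Poincar\'e inequality in $BV$: there is a constant $C = C(N)$ such that $\int_Q |u_\e - c_\e|\,dz \le C\,|Du_\e|(Q)$, where $c_\e$ is the average of $u_\e$ over $Q$ (or a median). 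Combining, $\int_Q |u_\e - c_\e|\,dz \to 0$. It remains to control the constants $c_\e$: using (ii), the average of $u(x_0+\e z)$ over $z \in Q$ converges to $\widetilde u(x_0)$, and the average of $\e\,\nabla u(x_0)\cdot z$ over $Q$ is $0$ by symmetry, so $c_\e \to 0$. Therefore $\int_Q |u_\e|\,dz \to 0$, which is exactly the claimed limit after undoing the rescaling.

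\textbf{Main obstacle.} The delicate point is item (iii): one must justify that for $\L^N$-a.e.\ $x_0$ the singular measure $|D^s u|$ has vanishing $N$-dimensional density, $\lim_{\e\to 0^+}|D^s u|(Q(x_0,\e))/\e^N = 0$. This is a consequence of the Besicovitch derivation theorem (or the Lebesgue--Radon--Nikod\'ym decomposition together with density estimates): since $|D^s u| \perp \L^N$, the set where $\limsup_{\e\to 0}|D^s u|(Q(x_0,\e))/\e^N > 0$ is $\L^N$-null. A secondary technical point is the precise form of the Poincar\'e inequality on cubes and the choice of $c_\e$ (mean versus median) so that the constant is dimensional and uniform in $\e$; this is standard but must be invoked carefully, and one should note that all cubes $Q(x_0,\e)$ are homothetic to the fixed cube $Q$, so a single constant suffices. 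Finally, one checks that the set of ``good'' points where (i), (ii), (iii) simultaneously hold has full $\L^N$-measure, being a finite intersection of full-measure sets.
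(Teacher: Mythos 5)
Your overall scheme (Lebesgue point of $\nabla u$, vanishing $N$-density of $|D^s u|$, blow-up plus a Poincar\'e inequality in $BV$) is the standard one; note that the paper itself states this classical Calder\'on--Zygmund theorem without proof, so the comparison is with the textbook argument. However, your control of the constants $c_\e$ has a genuine gap. By your definition of $u_\e$, and since the average of $\nabla u(x_0)\cdot z$ over the centred unit cube $Q$ vanishes,
\begin{equation*}
c_\e \;=\; \frac{1}{\e}\left(\frac{1}{\e^{N}}\int_{Q(x_0,\e)}u(y)\,dy \;-\; \widetilde u(x_0)\right).
\end{equation*}
The Lebesgue point property (ii) only says that the quantity in parentheses is $o(1)$ as $\e\to 0^+$; after dividing by $\e$ this gives no information, so the claim ``$c_\e\to 0$'' does not follow from (ii). In fact $c_\e\to 0$ is essentially equivalent to the $o(\e)$ decay of the averaged deviation that the theorem asserts, so at this point the argument is circular: being a Lebesgue point of $u$ and of $\nabla u$ is strictly weaker than the rate you need.

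The missing idea is a chaining (dyadic/telescoping) argument across scales. Set $v(y):=u(y)-\widetilde u(x_0)-\nabla u(x_0)\cdot(y-x_0)$ and let $(v)_\rho$ denote its average over $Q(x_0,\rho)$. The Poincar\'e inequality at each scale gives
\begin{equation*}
\big|(v)_{\rho/2}-(v)_{\rho}\big| \;\le\; \frac{2^{N}}{\rho^{N}}\int_{Q(x_0,\rho)}\big|v-(v)_{\rho}\big|\,dy \;\le\; C\,\rho\,\frac{|Dv|(Q(x_0,\rho))}{\rho^{N}},
\end{equation*}
and your items (i) and (iii) give $|Dv|(Q(x_0,\rho))\le \eta(\rho)\,\rho^{N}$ with $\eta(\rho)\to 0$. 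Summing over the dyadic scales $\rho_j=2^{-j}\e$ and using (ii) only to identify $\lim_{j\to\infty}(v)_{\rho_j}=0$, one gets $|(v)_\e|\le C\,\e\sum_{j\ge 0}2^{-j}\eta(2^{-j}\e)=o(\e)$, i.e.\ exactly $c_\e\to 0$; the series tends to $0$ as $\e\to 0^+$ by dominated convergence. Equivalently one may invoke the scale-integrated Poincar\'e estimate used in the proof of Theorem 3.83 in the book of Ambrosio, Fusco and Pallara, which packages the same chaining. With this step supplied, the rest of your proposal (the full-measure choice of $x_0$, the rescaling identities for $Du_\e$, and the uniformity of the Poincar\'e constant on homothetic cubes) is correct.
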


\begin{definition}
	\label{Setsoffiniteperimeter} Let $E$ be an $\mathcal{L}^{N}$- measurable
	subset of $\mathbb{R}^{N}$. For any open set $\Omega\subset\mathbb{R}^{N}$ the
	{\em perimeter} of $E$ in $\Omega$, denoted by $P(E;\Omega)$, is 
	given by
	\begin{equation}
	\label{perimeter}P(E;\Omega):=\sup\left\{  \int_{E} \mathrm{div}\varphi(x) \,dx:
	\varphi\in C^{1}_{c}(\Omega;\mathbb{R}^{d}), \|\varphi\|_{L^{\infty}}%
	\leq1\right\}  .
	\end{equation}
	We say that $E$ is a {\em set of finite perimeter} in $\Omega$ if $P(E;\Omega) <+
	\infty.$
\end{definition}

Recalling that if $\mathcal{L}^{N}(E \cap\Omega)$ is finite, then $\chi_{E}
\in L^{1}(\Omega)$, by \cite[Proposition 3.6]{AFP}, it follows
that $E$ has finite perimeter in $\Omega$ if and only if $\chi_{E} \in
BV(\Omega)$ and $P(E;\Omega)$ coincides with $|D\chi_{E}|(\Omega)$, the total
variation in $\Omega$ of the distributional derivative of $\chi_{E}$.
Moreover,  a
generalised Gauss-Green formula holds:
\begin{equation}\nonumber
{\int_{E}\mathrm{div}\varphi(x) \, dx
=\int_{\Omega}\left\langle\nu_{E}(x),\varphi(x)\right\rangle \, d|D\chi_{E}|,
\;\;\forall\,\varphi\in C_{c}^{1}(\Omega;\mathbb{R}^{d})},
\end{equation}
where $D\chi_{E}=\nu_{E}|D\chi_{E}|$ is the polar decomposition of $D\chi_{E}$.

The following approximation result can be found in \cite{BA}.

\begin{lemma}\label{polyhedra}
Let $E$ be a set of finite perimeter in $\Omega$. Then, there exists a sequence of polyhedra $E_n$, with characteristic functions $\chi_n$, such that $\chi_n\to \chi$ in $L^1(\Omega;\{0,1\})$ and $P (E_n;\Omega)\to  P(E;\Omega)$.
\end{lemma}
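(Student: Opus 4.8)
The plan is to prove Lemma~\ref{polyhedra} by combining an interior polyhedral approximation with a careful control of the perimeter near $\partial\Omega$, exploiting that $\Omega$ itself has Lipschitz boundary. First I would recall the classical fact (see, e.g., \cite{AFP}) that any set of finite perimeter $E\subset\mathbb R^N$ can be approximated in $L^1_{\rm loc}$ by smooth open sets $E_k$ with $\mathcal H^{N-1}(\partial E_k\cap\Omega)\to P(E;\Omega)$; equivalently one may start from the mollification $u_\delta := \chi_E * \rho_\delta$, which is smooth and satisfies $u_\delta\to\chi_E$ in $L^1_{\rm loc}$ with $\int_\Omega|\nabla u_\delta|\,dx\to |D\chi_E|(\Omega)$ as $\delta\to0^+$. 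Then, by the coarea formula applied to $u_\delta$ and Sard's theorem, for a.e. level $t\in(0,1)$ the superlevel set $\{u_\delta>t\}$ has smooth boundary inside $\Omega$, and one can choose $t$ (and a diagonal sequence in $\delta$) so that the resulting smooth sets $F_n$ satisfy $\chi_{F_n}\to\chi_E$ in $L^1(\Omega)$ and $P(F_n;\Omega)\to P(E;\Omega)$ — here the upper bound $\limsup_n P(F_n;\Omega)\le P(E;\Omega)$ comes from the coarea estimate and the matching lower bound from lower semicontinuity of the perimeter under $L^1$ convergence.

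The second step is to replace the smooth sets $F_n$ by polyhedra. A set with smooth compact boundary can be approximated by polyhedral sets: triangulate a neighbourhood of $\partial F_n$ at a sufficiently fine scale $h$ and take the piecewise-affine interpolation of the defining function, or equivalently use that $\partial F_n$ is a smooth compact hypersurface which can be approximated in the $C^1$ (hence Hausdorff-measure) sense by polyhedral surfaces. This gives polyhedra $P_{n,h}$ with $\chi_{P_{n,h}}\to\chi_{F_n}$ in $L^1$ and $\mathcal H^{N-1}(\partial P_{n,h})\to\mathcal H^{N-1}(\partial F_n)$ as $h\to0^+$, from which a further diagonalization produces a single sequence of polyhedra $E_n$ with $\chi_n:=\chi_{E_n}\to\chi_E$ in $L^1(\Omega;\{0,1\})$ and $\limsup_n P(E_n;\Omega)\le P(E;\Omega)$; the reverse inequality is again lower semicontinuity. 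Note that, since $P(E_n;\Omega)$ only counts the part of $\partial E_n$ lying inside $\Omega$, polyhedra with vertices outside $\overline\Omega$ are perfectly admissible, which is why no artificial boundary effect appears in the statement.

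The main obstacle I anticipate is the convergence of perimeters (the ``$\Gamma$-recovery'' direction $\limsup_n P(E_n;\Omega)\le P(E;\Omega)$) rather than the $L^1$ convergence, which is routine. The delicate point is ensuring that the approximation does not create spurious perimeter concentrating on $\partial\Omega$ — i.e. that mass of $|D\chi_{E_n}|$ does not escape to the boundary in the limit. One handles this by working with the mollification on all of $\mathbb R^N$ after noting that, up to modifying $E$ outside a slightly larger open set, one may assume $P(E;\mathbb R^N)<+\infty$, or more simply by exhausting $\Omega$ with open sets $\Omega_j\Subset\Omega$ with $P(E;\Omega_j)\to P(E;\Omega)$ and $\mathcal H^{N-1}(\partial\Omega\cap\partial^* E)=0$ (true for a.e. dilation), performing the polyhedral approximation on each $\Omega_j$ and diagonalizing. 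Since this lemma is quoted verbatim from \cite{BA}, in the paper it suffices to cite that reference; the sketch above indicates the route one would take to reconstruct the proof.
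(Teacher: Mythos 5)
The paper itself does not prove this lemma: it simply quotes it from \cite{BA}, so there is no internal argument to compare with, and your overall strategy (mollify, use the coarea formula and Sard's theorem to select smooth superlevel sets, approximate smooth boundaries by polyhedra, diagonalise, and get the lower bound from lower semicontinuity) is indeed the standard route. You also correctly identify where the real content lies, namely that the convergence is of the \emph{relative} perimeters $P(\cdot;\Omega)$ and that no mass may be created near $\partial\Omega$.

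However, your proposed resolutions of that boundary issue have a genuine gap. You cannot ``assume $P(E;\mathbb R^N)<+\infty$ up to modifying $E$ outside a slightly larger open set'': $E$ is only of finite perimeter \emph{in} $\Omega$, so it may fail to have locally finite perimeter in any neighbourhood of $\overline\Omega$, and the only modification available without further work is cutting at $\partial\Omega$, i.e.\ replacing $E$ by $E\cap\Omega$ (which does have finite perimeter in $\mathbb R^N$ since $\partial\Omega$ is Lipschitz). But then $|D\chi_{E\cap\Omega}|$ charges $\partial\Omega$ wherever the trace of $\chi_E$ equals $1$, and global mollification lets exactly this mass leak into $\Omega$: take $E=\Omega$, so $P(E;\Omega)=0$, while the smooth superlevel sets of $\chi_\Omega*\rho_\delta$ (for levels $t>1/2$, say) have boundaries inside $\Omega$ with $\mathcal H^{N-1}$-measure close to $\mathcal H^{N-1}(\partial\Omega)>0$; thus $\limsup_n P(E_n;\Omega)\le P(E;\Omega)$ fails for that construction. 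The alternative you sketch, exhausting by $\Omega_j\Subset\Omega$ and diagonalising, is also incomplete as stated, because the polyhedra built for $\Omega_j$ carry uncontrolled perimeter in the collar $\Omega\setminus\Omega_j$, which contributes to $P(E_n;\Omega)$. The standard repair (and what lies behind the result quoted from \cite{BA}) is to use the Lipschitz regularity of $\partial\Omega$ to extend $E$ across $\partial\Omega$ by a local reflection/bi-Lipschitz extension, producing a set $\tilde E$ of finite perimeter in a neighbourhood $\Omega'\supset\supset\Omega$ with $\tilde E\cap\Omega=E\cap\Omega$ and, crucially, $|D\chi_{\tilde E}|(\partial\Omega)=0$ (the traces from the two sides of $\partial\Omega$ coincide). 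Mollifying $\chi_{\tilde E}$, applying coarea and Sard, and then your polyhedral approximation of the resulting smooth boundaries gives $\chi_{E_n}\to\chi_E$ in $L^1(\Omega)$ together with $|D\chi_{E_n}|\wsto|D\chi_{\tilde E}|$ locally in $\Omega'$ with convergence of total masses; since $|D\chi_{\tilde E}|(\partial\Omega)=0$, this yields $P(E_n;\Omega)\to P(E;\Omega)$, while the lower bound is, as you say, lower semicontinuity. With this extension step inserted, your argument becomes a correct reconstruction of the cited lemma.
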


\subsection{BD and LD Functions}\label{BD}

We now recall some facts about functions of bounded deformation. More details can be found in \cite{ACDM, BFT, BCDM, T, TS}.

\smallskip

A function $u\in L^{1}(\Omega;{\mathbb{R}}^{N})$ is said to be of bounded deformation, and we write $u\in BD(\Omega)$, if the symmetric part of its distributional derivative $Du$, $Eu : = \dfrac{Du + Du^T}{2},$ is a matrix-valued bounded Radon measure. 
The space $BD(\O)$ is a Banach space when endowed with the norm 
\begin{equation*}
\|u\|_{BD(\Omega)} = \|u\|_{L^1(\Omega; \Rb^N)} + |Eu|(\O ).
\end{equation*}
We denote by $LD(\O)$ the subspace of $BD(\O)$ comprised of functions $u$ such that
$Eu \in L^1(\O;\Rb^{N\times N}_s)$, a counterexample due to Ornstein \cite{O} shows that $W^{1,1}(\O;\mathbb R^N) \subsetneq LD(\O)$.

The intermediate topology in the space $BD(\O)$ is the one determined by the distance
$$d(u,v) := \|u-v\|_{L^1(\O;\Rb^N)} + \big| |Eu|(\O) - |Ev |(\O)\big|, 
\;u, v \in BD(\O).$$
Hence, a sequence $\{u_n\} \subset BD(\O)$ converges to a function $u \in BD(\O)$ with respect to this topology, written $u_n \stackrel{i}{\to}u$, if and only if,
$u_n \to u$ in $L^1(\O;\Rb^N)$, $Eu_n \wsto Eu$ in the sense of measures and
$|Eu_n|(\O) \to |Eu|(\O)$. 

Recall that if $u_n \to u$ in $L^1(\Omega;\Rb^N)$ and there exists $C > 0$ such that $|Eu_n|(\Omega) \leq C, \forall n \in \Nb$, then $u \in BD(\Omega)$ and 
\begin{equation}\label{Elsc}
|Eu|(\Omega) \leq \liminf_{n\to +\infty}|Eu_n|(\Omega).
\end{equation}

By the Lebesgue Decomposition Theorem, $Eu$ can be split into the sum of two
mutually singular measures $E^{a}u$ and $E^{s}u$, the absolutely continuous
part and the singular part, respectively, of $Eu$ with respect to the
Lebesgue measure $\mathcal{L}^N$. The 
Radon-Nikod\'{y}m derivative of $E^{a}u$ with respect to $\mathcal{L}^N$, is denoted by $\E u$ so we have
\begin{equation*}
Eu= \E u \, \mathcal{L}^N \lfloor \O + E^{s}u.
\end{equation*}
With these notations we may write 
$$LD(\Omega):=\{u \in BD(\Omega): E^s u=0\}$$
and (cf. \cite{T}) $LD(\Omega)$ is a Banach space when endowed with the norm
$$\|u\|_{LD(\Omega)} : = \|u\|_{L^1(\Omega;\mathbb R^N)} + 
\|{\cal E} u\|_{L^1(\Omega;\mathbb R^N)}.$$

If $\O$ is a bounded, open subset of $\Rb^N$ with Lipschitz boundary $\Gamma$, then
there exists a linear, surjective and continuous, both with respect to the norm and to the intermediate topologies, trace operator
$$\text{tr }: BD(\O) \to L^1(\O;\Rb^N)$$
such that tr $u = u$ if $u \in BD(\O) \cap C(\overline{\O};\Rb^N)$. Furthermore, the following Gauss-Green formula holds
\begin{equation}\label{GGBD}
\int_{\O}(u \odot D \varphi)(x) \, dx + \int_{\O}\varphi(x) \, dEu(x) =
\int_{\Gamma}\varphi (x)(\text{tr}u \odot \nu)(x) \, d \mathcal H ^{N-1}(x),
\end{equation}
for every $\varphi \in C^1(\overline{\O})$ (cf. \cite{ACDM, T}).

\smallskip

The following lemma is proved in \cite{BFT}.

\begin{lemma}\label{lemma2.2BFT}
Let $u \in BD(\O)$ and let $\rho \in C_0^{\infty}(\Rb^N)$ be a non-negative function such that ${\rm supp}(\rho) \subset\subset B(0,1)$, $\rho(-x) = \rho(x)$ for every $x \in \Rb^N$ and $\displaystyle \int_{\Rb^N}\rho(x) \, dx = 1$. For any $n \in \Nb$ set $\rho_n(x) : = n^N\rho(nx)$ and
$$u_n(x) := (u *\rho_n)(x) = \int_{\O}u(y)\rho_n(x-y) \, dy, \; \;
\mbox{for} \; x \in \left\{y \in \O : {\rm dist}(y,\partial \O) > \frac{1}{n}\right\}.$$
Then $u_n \in C^{\infty}\left(\left\{y \in \O : {\rm dist}(y,\partial \O) > \frac{1}{n}\right\};\Rb^N\right)$ and
\begin{itemize}
\item[i)] for any non-negative Borel function $h : \O \to \Rb$
$$\int_{B(x_0,\varepsilon)}h(x) |\E u_n(x)| \, dx \leq 
\int_{B(x_0,\varepsilon+ \frac{1}{n})}(h*\rho_n)(x) \, d|E u|(x),$$
whenever $\e + \frac{1}{n} < {\rm dist}(x_0,\partial \O)$;
\item[ii)] for any positively homogeneous of degree one, convex function 
$\theta : \Rb^{N\times N}_{\rm sym} \to [0,+\infty[$ and any $\e \in \, ]0,{\rm dist}(x_0,\partial \O)[$ such that $|E u|(\partial B(x_0,\e)) = 0$,
$$\lim_{n\rightarrow +\infty}\int_{B(x_0,\varepsilon)} \theta(\E u_n(x)) \, dx =
\int_{B(x_0,\varepsilon)}\theta\left(\frac{d Eu}{d |Eu|}\right) \, d |Eu|,$$
\item[iii)] $\displaystyle \lim_{n \to + \infty}u_n(x) = \widetilde u(x)$ and
$\displaystyle \lim_{n \to + \infty}(|u_n -u| * \rho_n)(x) = 0$ for every
$x \in \O \setminus S_u$, whenever $u \in L^{\infty}(\O;\Rb^N)$.
\end{itemize}
\end{lemma}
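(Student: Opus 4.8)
The plan is to base everything on the pointwise identity
\begin{equation}\label{eq:mollidentity}
\E u_n(x) = \int_\O \rho_n(x-y)\, dEu(y), \qquad x \in \O_n := \Big\{y \in \O : \dist(y,\partial \O) > \tfrac1n\Big\},
\end{equation}
and to deduce i) and ii) from it, treating iii) by direct estimates. The smoothness of $u_n$ on $\O_n$ is the standard fact that a convolution with $\rho_n \in C_c^\infty$ can be differentiated under the integral sign. To obtain \eqref{eq:mollidentity}, fix $x \in \O_n$, so that $y \mapsto \rho_n(x-y)$ belongs to $C_c^\infty(\O)$; then $\partial_k(u_n)_j(x) = \int_\O u_j(y)\,(\partial_k\rho_n)(x-y)\, dy = \langle D_k u_j,\rho_n(x-\cdot)\rangle$ in the sense of distributions, and symmetrising in $(j,k)$ and recalling that the entries $(Eu)_{jk} = \tfrac12(D_j u_k + D_k u_j)$ are, by definition of $BD$, the components of the finite Radon measure $Eu$, one arrives at \eqref{eq:mollidentity}.

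For i), since $\rho_n \geq 0$ the pointwise bound $|\E u_n(x)| = \big|\int_\O \rho_n(x-y)\tfrac{dEu}{d|Eu|}(y)\, d|Eu|(y)\big| \leq \int_\O \rho_n(x-y)\, d|Eu|(y)$ holds; I would then multiply by $h(x)\geq0$, integrate over $B(x_0,\e)$, apply Tonelli's theorem, and use the elementary estimate $\int_{B(x_0,\e)} h(x)\rho_n(x-y)\, dx \leq \int_\O h(x)\rho_n(y-x)\, dx = (h*\rho_n)(y)$ — valid because $B(x_0,\e) \subset \O$, $h \geq 0$ and $\rho_n$ is even — together with the fact that the left-hand side of this estimate vanishes unless $y \in B(x_0,\e+\tfrac1n)$.

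For ii) I would prove the two inequalities separately. From \eqref{eq:mollidentity} and i) (with $h\equiv1$) the measures $\E u_n\,\mathcal L^N\lfloor B(x_0,\e)$ are bounded in $\mathcal M(B(x_0,\e))$, and testing componentwise against $\varphi\in C_c^\infty(B(x_0,\e))$, using the evenness of $\rho_n$, gives $\int_\O\varphi\,\E u_n\, dx = \int_\O(\varphi*\rho_n)\, dEu \to \int_\O\varphi\, dEu$; hence $\E u_n\,\mathcal L^N\lfloor B(x_0,\e)\wsto Eu\lfloor B(x_0,\e)$, and the classical lower semicontinuity theorem of Reshetnyak (see \cite{AFP}), which applies since $\theta$ is convex, positively $1$-homogeneous and nonnegative, yields $\liminf_n\int_{B(x_0,\e)}\theta(\E u_n)\, dx \geq \int_{B(x_0,\e)}\theta\big(\tfrac{dEu}{d|Eu|}\big)\, d|Eu|$. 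For the opposite inequality I would exploit the sublinearity of $\theta$, writing $\theta(\xi) = \sup_{A\in C}A:\xi$ for a compact convex set $C$, to get from \eqref{eq:mollidentity} the Jensen-type bound $\theta(\E u_n(x)) \leq \int_\O\rho_n(x-y)\,\theta\big(\tfrac{dEu}{d|Eu|}(y)\big)\, d|Eu|(y)$; integrating over $B(x_0,\e)$, applying Tonelli and using $\int_{B(x_0,\e)}\rho_n(x-y)\, dx \leq 1$ with support in $B(x_0,\e+\tfrac1n)$ gives $\int_{B(x_0,\e)}\theta(\E u_n)\, dx \leq \int_{B(x_0,\e+\frac1n)}\theta\big(\tfrac{dEu}{d|Eu|}\big)\, d|Eu|$, and since $\theta$ is bounded on the unit sphere and $|Eu|(\partial B(x_0,\e))=0$ the right-hand side tends to $\int_{B(x_0,\e)}\theta\big(\tfrac{dEu}{d|Eu|}\big)\, d|Eu|$ as $n\to+\infty$.

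For iii), fix $x\in\O\setminus S_u$ with approximate limit $\widetilde u(x)$. From $\operatorname{supp}\rho\subset\subset B(0,1)$ we have $0\leq\rho_n\leq\|\rho\|_\infty n^N\chi_{B(0,1/n)}$, so $|u_n(x)-\widetilde u(x)| \leq \|\rho\|_\infty n^N\int_{B(x,1/n)}|u(y)-\widetilde u(x)|\, dy\to0$ by the defining property of a Lebesgue point. For the second convergence I would write $u_n(z)-u(z) = \int_\O\rho_n(z-y)(u(y)-u(z))\, dy$, use $|u(y)-u(z)| \leq |u(y)-\widetilde u(x)| + |u(z)-\widetilde u(x)|$, and, after Tonelli, observe that the first term produces $\int_\O(\rho_n*\rho_n)(x-y)|u(y)-\widetilde u(x)|\, dy$ — with $\rho_n*\rho_n$ an even mollifier supported in $B(0,2/n)$ and bounded by $\|\rho\|_\infty n^N$ — while the second produces $\int_\O\rho_n(x-z)|u(z)-\widetilde u(x)|\, dz$; both vanish as $n\to+\infty$ exactly as before, the boundedness of $u$ serving to make all the integrals finite so that Tonelli/Fubini applies. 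The main obstacle is part ii): the lower bound hinges on the Reshetnyak semicontinuity theorem for the weak-$*$ convergence established above, and the upper bound on the Jensen inequality for convex positively $1$-homogeneous integrands together with the limit passage using $|Eu|(\partial B(x_0,\e))=0$; parts i) and iii) are routine by comparison, the former once \eqref{eq:mollidentity} is in hand.
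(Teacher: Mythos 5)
Your proof is correct, and since the paper gives no proof of this lemma (it is simply quoted from [BFT, Lemma 2.2]), your argument is essentially the standard one from that reference: the mollification identity $\E u_n(x)=\int_\O \rho_n(x-y)\,dEu(y)$, Tonelli plus the evenness of $\rho_n$ for i), Reshetnyak lower semicontinuity for the weak-$*$ convergent measures $\E u_n\,\mathcal L^N$ combined with the sublinear Jensen bound and $|Eu|(\partial B(x_0,\e))=0$ for ii), and Lebesgue-point estimates for iii). No gaps to report.
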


The following result, proved in \cite{T}, see also \cite[Theorem 2.6]{BFT}, shows that it is possible to approximate any $BD(\O)$ function $u$ by a sequence of smooth functions which preserve the trace of $u$.

\begin{theorem}\label{densitysmooth}
Let $\O$ be a bounded, connected, open set with Lipschitz boundary. For every $u \in BD(\O)$, there exists a sequence of smooth functions $\{u_n\} \subset C^{\infty}(\O;\Rb^N) \cap W^{1,1}(\O;\Rb^N)$ such that $u_n \stackrel{i}{\to}u$ and
tr $u_n =$ tr $u$. If, in addition, $u \in LD(\O)$, then $\E u_n \to \E u$ in
$L^1(\O;\Rb_s^{N\times N}).$ 
\end{theorem}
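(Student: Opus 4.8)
The plan is to follow the classical localisation-and-mollification scheme (as in Temam's work), the one delicate point being the \emph{exact} preservation of the trace. Since $\partial\O$ is compact and Lipschitz, I would cover $\overline\O$ by finitely many open sets $U_0,U_1,\dots,U_m$ with $U_0\ss\O$ and such that, for $j\ge1$, after a rigid change of coordinates $U_j\cap\O$ is the subgraph $\{x_N>\g_j(x')\}$ of a Lipschitz function $\g_j$; in particular there are a unit vector $e_j$ and constants $t_j,c_j>0$ with $B(x+te_j,c_jt)\subset\O$ for every $x\in\overline\O\cap U_j$ and $0<t<t_j$. Let $\{\f_j\}_{j=0}^m$ be a smooth partition of unity subordinate to $\{U_j\}$ with $\sum_j\f_j\equiv1$ on a neighbourhood of $\overline\O$, so that $\sum_j\nabla\f_j\equiv0$ there. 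Writing $u=\sum_j\f_ju$ one has $E(\f_ju)=\f_jEu+u\odot\nabla\f_j$ and, crucially, $\sum_j u\odot\nabla\f_j=0$: this is the identity that makes the spurious terms created by smoothing the pieces separately cancel in the limit.

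For small $t>0$ I would set, for $j\ge1$,
\[
v_{j,t}(x):=\bigl((\f_ju)(\cdot+te_j)*\rho_{c_jt/2}\bigr)(x),
\]
which by the cone property is well defined and $C^\infty$ on a neighbourhood of $\overline{\O\cap U_j}$, lies in $W^{1,1}$, and satisfies $v_{j,t}\to\f_ju$ in $L^1(\O)$; one also sets $v_{0,t}:=(\f_0u)*\rho_{c_0t}\in C_c^\infty(\O)$ for $t$ small. Since the symmetrised derivative commutes with translations and with mollification, $Ev_{j,t}$ is the (translated, mollified) measure $\f_jEu+u\odot\nabla\f_j$; here each term $[(\f_jEu)(\cdot+te_j)]*\rho_{c_jt/2}$ has total variation on $\O$ at most $|\f_jEu|(\O)=\int\f_j\,d|Eu|$, uniformly in $t$, so the sum over $j$ has total variation $\le|Eu|(\O)$, while the $L^1$ terms $[(u\odot\nabla\f_j)(\cdot+te_j)]*\rho_{c_jt/2}-u\odot\nabla\f_j$ tend to $0$ in $L^1$ as $t\to0$ by continuity of translation. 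Hence $u_t^{(0)}:=\sum_{j=0}^m v_{j,t}\in C^\infty(\O)\cap W^{1,1}(\O)$ obeys $u_t^{(0)}\to u$ in $L^1$, so $Eu_t^{(0)}\wsto Eu$, and combining the mass estimate above with the lower semicontinuity \eqref{Elsc} gives $|Eu_t^{(0)}|(\O)\to|Eu|(\O)$; that is $u_t^{(0)}\stackrel{i}{\to}u$.

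What is still missing is the trace, which by continuity of $\text{tr}$ for the intermediate topology only satisfies $\text{tr}\,u_t^{(0)}\to\text{tr}\,u$ in $L^1(\partial\O)$. Put $h_t:=\text{tr}\,u-\text{tr}\,u_t^{(0)}$, so $\|h_t\|_{L^1(\partial\O)}\to0$, and let $R:L^1(\partial\O;\Rb^N)\to W^{1,1}(\O;\Rb^N)$ be a bounded trace lifting, $\|Rh\|_{W^{1,1}(\O)}\le C\|h\|_{L^1(\partial\O)}$, $\text{tr}\,(Rh)=h$, \emph{with $Rh\in C^\infty(\O)$}. On a merely Lipschitz domain the last property is the point requiring care: one realises $Rh$ by composing a Gagliardo lift with a variable-scale mollification supported in a collar of $\partial\O$ and carried out at a radius comparable to a smooth regularised distance $d(x)$ to $\partial\O$ (after the substitution $y=x-d(x)z/K$ the $x$-dependence sits only in the smooth kernel, which gives smoothness in $\O$), the radius vanishing at $\partial\O$ being exactly what leaves the trace equal to $h$, and the $W^{1,1}$ bound following from the cancellation of the singular terms as in the Anzellotti–Giaquinta regularisation. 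Then $u_t:=u_t^{(0)}+R(h_t)\in C^\infty(\O)\cap W^{1,1}(\O)$, $\text{tr}\,u_t=\text{tr}\,u_t^{(0)}+h_t=\text{tr}\,u$, and since $|ER(h_t)|(\O)\le|DR(h_t)|(\O)\le C\|h_t\|_{L^1(\partial\O)}\to0$ we keep $u_t\to u$ in $L^1$ and $|Eu_t|(\O)\to|Eu|(\O)$; a diagonal choice $t=t_n\to0$ produces $\{u_n\}$.

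Finally, if $u\in LD(\O)$, then $Eu=\E u\,\mathcal L^N$, and since $\E$ commutes with translation and mollification, $\E u_t^{(0)}=\sum_j\bigl((\f_j\E u+u\odot\nabla\f_j)(\cdot+te_j)\bigr)*\rho_{c_jt/2}$ on $\O$; continuity of translation in $L^1$ and the approximate-identity property give $\E u_t^{(0)}\to\sum_j(\f_j\E u+u\odot\nabla\f_j)=\E u$ in $L^1(\O;\Rb^{N\times N}_s)$, while $\|\E R(h_t)\|_{L^1(\O)}\le\|DR(h_t)\|_{L^1(\O)}\to0$, so $\E u_n\to\E u$ in $L^1$. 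I expect the genuine difficulty to be the third step: forcing the trace to equal $\text{tr}\,u$ \emph{exactly} while keeping the approximants smooth in $\O$ and intermediate-convergent — these requirements compete, because mollification is what produces smoothness but it displaces the trace, and the remedies (variable-scale/boundary-layer mollification of the lifted error) must be shown not to spoil $|Eu_n|(\O)\to|Eu|(\O)$, which works precisely because $h_t\to0$ in $L^1(\partial\O)$ makes its lift negligible in $BD$. The only remaining care is the partition-of-unity bookkeeping, handled by $\sum_j u\odot\nabla\f_j=0$ and the diagonal argument.
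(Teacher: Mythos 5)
The paper itself offers no proof of this statement: it is quoted from Temam \cite{T} (see also \cite[Theorem 2.6]{BFT}), where the approximation is produced by a single interior, variable-scale (Meyers--Serrin type) mollification whose radii shrink to zero near $\partial\Omega$, which preserves the trace outright and requires no boundary correction. Your route is genuinely different but sound: you first build approximants smooth up to the boundary by the inward-translation-plus-mollification device on a Lipschitz boundary atlas --- the product rule $E(\varphi_j u)=\varphi_j Eu+u\odot\nabla\varphi_j$, the cancellation $\sum_j u\odot\nabla\varphi_j=0$ and the mass bound $\sum_j\int_\Omega\varphi_j\,d|Eu|=|Eu|(\Omega)$ combined with the lower semicontinuity \eqref{Elsc} do give $u^{(0)}_t\stackrel{i}{\to}u$ --- and then you restore the displaced trace by adding a lifted copy of the error $h_t$. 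What your approach buys is smoothness up to $\partial\Omega$ before the correction; what the classical approach buys is exact trace preservation with no lifting at all. Two caveats, neither fatal: (i) there is no bounded \emph{linear} lifting $L^1(\partial\Omega;\Rb^N)\to W^{1,1}(\Omega;\Rb^N)$ (Gagliardo, Peetre), so $R$ must be read as Gagliardo's nonlinear construction, which does give $\|Rh\|_{W^{1,1}(\Omega)}\le C\|h\|_{L^1(\partial\Omega)}$ and is all you actually use, since you only apply it to $h_t\to0$; (ii) the smooth-in-$\Omega$, trace-preserving realisation of $Rh$ does not need the collar/regularised-distance construction you sketch: the standard Meyers--Serrin smoothing of a $W^{1,1}$ function differs from it by an element of $W^{1,1}_0(\Omega)$ (a norm-convergent sum of compactly supported corrections), hence has the same trace and comparable norm. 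Finally, invoking the continuity of the trace with respect to the intermediate topology to get $h_t\to0$ in $L^1(\partial\Omega;\Rb^N)$ is legitimate, since that continuity is established independently of the density theorem, so there is no circularity; the $LD(\Omega)$ refinement then follows exactly as you indicate, because $\E$ commutes with translation and mollification and $\|\E R(h_t)\|_{L^1(\Omega)}\to0$.
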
 

It is also shown in \cite{T} that if $\O$ is an open, bounded subset of $\Rb^N$, with Lipschitz boundary, then $BD(\O)$ is compactly embedded in $L^q(\O;\Rb^N)$, for every $1 \leq q < \frac{N}{N-1}.$ 
In particular, 
the following result holds.

\begin{theorem}\label{THM2.10BFT}
Let $\Omega$ be an open, bounded subset of $\mathbb R^N$, with Lipschitz boundary
and let $1\leq q <\frac{N}{N-1}$. If $\{u_n\}$ is bounded in $BD(\Omega)$, then there exist $u \in BD(\Omega)$ and a subsequence $\{ u_{n_k}\}$ of  $\{u_n\}$ such that $u_{n_k}\to u$ in $L^q (\Omega;\mathbb R^N)$.
\end{theorem}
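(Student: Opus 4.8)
The plan is to derive this compactness statement from the two $BD$-embedding facts recalled immediately above — the continuous embedding $BD(\Omega)\hookrightarrow L^{N/(N-1)}(\Omega;\mathbb R^N)$ and the compact embedding $BD(\Omega)\hookrightarrow\hookrightarrow L^{1}(\Omega;\mathbb R^N)$, both from \cite{T} — combined with the lower semicontinuity inequality \eqref{Elsc} and a Lebesgue-space interpolation estimate.

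First I would set $C_0:=\sup_{n\in\mathbb{N}}\big(\|u_n\|_{L^1(\Omega;\mathbb R^N)}+|Eu_n|(\Omega)\big)<+\infty$, which is exactly the assumed bound in $BD(\Omega)$. By the Sobolev–Korn-type inequality for functions of bounded deformation (see \cite{T}) there is a constant $C=C(N,\Omega)$ with $\|v\|_{L^{N/(N-1)}(\Omega;\mathbb R^N)}\le C\,\|v\|_{BD(\Omega)}$ for every $v\in BD(\Omega)$; hence $\{u_n\}$ is also bounded in $L^{N/(N-1)}(\Omega;\mathbb R^N)$, say $\|u_n\|_{L^{N/(N-1)}(\Omega;\mathbb R^N)}\le C_1$ for all $n$.

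Next, since the embedding $BD(\Omega)\hookrightarrow\hookrightarrow L^{1}(\Omega;\mathbb R^N)$ is compact (again \cite{T}; alternatively, mollifying each $u_n$ as in Lemma~\ref{lemma2.2BFT}, or invoking the smooth approximation of Theorem~\ref{densitysmooth}, reduces matters to Rellich's theorem in $W^{1,1}$, the uniform bound on $|Eu_n|(\Omega)$ supplying the equi-continuity of translations needed for a Fréchet–Kolmogorov argument), there are a subsequence $\{u_{n_k}\}$ and $u\in L^1(\Omega;\mathbb R^N)$ with $u_{n_k}\to u$ in $L^1(\Omega;\mathbb R^N)$. Because $|Eu_{n_k}|(\Omega)\le C_0$ for all $k$, \eqref{Elsc} yields $u\in BD(\Omega)$ with $|Eu|(\Omega)\le C_0$; in particular $u\in L^{N/(N-1)}(\Omega;\mathbb R^N)$ and, passing to a further subsequence converging $\mathcal L^N$-a.e. and applying Fatou's lemma, $\|u\|_{L^{N/(N-1)}(\Omega;\mathbb R^N)}\le C_1$.

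Finally I would upgrade the $L^1$ convergence to $L^q$: fix $q$ with $1\le q<\frac{N}{N-1}$ and pick $\lambda\in(0,1]$ with $\frac1q=\lambda+\frac{(1-\lambda)(N-1)}{N}$; the interpolation inequality $\|f\|_{L^q}\le\|f\|_{L^1}^{\lambda}\|f\|_{L^{N/(N-1)}}^{1-\lambda}$ applied to $f=u_{n_k}-u$, together with $\|u_{n_k}-u\|_{L^{N/(N-1)}(\Omega;\mathbb R^N)}\le 2C_1$, gives $\|u_{n_k}-u\|_{L^{q}(\Omega;\mathbb R^N)}\le (2C_1)^{1-\lambda}\|u_{n_k}-u\|_{L^1(\Omega;\mathbb R^N)}^{\lambda}\to 0$ as $k\to\infty$ (for $q=1$ take $\lambda=1$, which is the previous step itself). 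The only substantial ingredients are Temam's two $BD$-embedding theorems quoted above, and among them the $L^1$-compactness, which ultimately rests on the Sobolev–Korn inequality for $BD$ — the genuine difficulty, since only the symmetrised distributional derivative, not the full gradient, is controlled. Granting that inequality (available from \cite{T}), the remaining steps — lower semicontinuity via \eqref{Elsc} and Lebesgue interpolation — are routine; reproving the $BD$-Sobolev inequality from scratch (e.g.\ via Ne\v{c}as' negative-norm estimate) would be the main obstacle in a self-contained treatment.
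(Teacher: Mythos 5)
Your argument is correct. Note, though, that the paper does not really prove this statement: it is presented as an immediate particular case of the compact embedding $BD(\Omega)\hookrightarrow\hookrightarrow L^q(\Omega;\mathbb R^N)$, $1\le q<\frac{N}{N-1}$, which is quoted directly from \cite{T}. Your route is slightly different and a bit more self-contained: you only invoke from \cite{T} the two weaker facts that $BD(\Omega)$ embeds continuously into $L^{N/(N-1)}$ and compactly into $L^1$, and then recover the $L^q$-compactness for all intermediate exponents by the standard Lebesgue interpolation inequality applied to $u_{n_k}-u$, using the uniform $L^{N/(N-1)}$ bound. You also take care of the point the paper leaves implicit, namely that the limit $u$ belongs to $BD(\Omega)$, by combining the uniform bound on $|Eu_{n_k}|(\Omega)$ with the lower semicontinuity property \eqref{Elsc}. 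What the paper's citation buys is brevity (the full strength of Temam's embedding theorem is simply assumed); what your derivation buys is that the only genuinely hard input is the $L^1$-compactness (equivalently the Sobolev--Korn inequality for $BD$), with everything else reduced to routine estimates — your exponent bookkeeping $\frac1q=\lambda+\frac{(1-\lambda)(N-1)}{N}$ with $\lambda\in(0,1]$ for $q<\frac{N}{N-1}$ is correct, and the case $q=1$ is indeed just the compactness step itself.
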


\smallskip

If $u \in BD(\O )$ then $J_u$ is countably $(N-1)$-rectifiable, see \cite{ACDM}, 
and the following decomposition holds 
\begin{equation*}
Eu= \E u \mathcal{L}^N \lfloor \O + [u] \odot \nu_u {\mathcal{H}}^{N-1}\lfloor J_u + E^cu,
\end{equation*}
\noindent where $[u] = u^+ - u^-$, $u^{\pm}$ are the traces of $u$ on the sides of $J_u$ determined by the unit normal $\nu_u$ to $J_u$ and $E^cu$ is the Cantor part of the measure $Eu$ which vanishes on  Borel sets $B$ with 
$\mathcal H^{N-1}(B) < + \infty.$

We end this subsection by pointing out that the equivalent of \eqref{apdif}, with $\E u(x)$ replacing $\nabla u(x)$, is false (see \cite{ACDM}). 
However the following result holds (cf. \cite[Theorem 4.3]{ACDM} and \cite[Theorem 2.5]{E2}).

\begin{theorem}
(Approximate Symmetric Differentiability)\label{approxsymdiff} 
If $u \in BD(\Omega),$ then, for $\mathcal{L}^N$-a.e. $x \in\Omega$ ,
there exists an $N\times N$ matrix $\nabla u(x)$ such that
\begin{equation}\label{apdifBD}
\lim_{\varepsilon \rightarrow 0^+} \frac{1}{\varepsilon^{N+1} }
\int_{B_\varepsilon(x)} |u(y) - u(x) - \nabla u(x).(y-x)|\, dy  = 0,
\end{equation}
\begin{equation}\label{apsymdif}
\lim_{\varepsilon \rightarrow 0^+} \frac{1}{\varepsilon^{N} }
\int_{B_\varepsilon(x)} \frac{|\langle u(y) - u(x) - \E u(x).(y-x), y-x\rangle|}{|y-x|^2}\, dy  = 0,
\end{equation}
for $\mathcal L^N$- a.e. $x \in \Omega$.
Furthermore
$${\mathcal L}^N(\{x\in \Omega:|\nabla u(x)|>t \})\leq 
\frac {C(N,\Omega)}{t}\|u\|_{BD(\Omega)}, \;\;\forall t>0,$$
with $C(N,\Omega)>0$ depending only on $N$ and $\Omega$. 
\end{theorem}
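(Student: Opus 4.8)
The plan is to derive this statement from the fine-structure analysis of $BD$ fields of Ambrosio, Coscia \& Dal Maso \cite{ACDM}, together with the localised refinement of Ebobisse \cite{E2}. I would organise the argument in three steps, plus an identification of the main obstacle.

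\emph{Step 1: one-dimensional sections.} For a fixed unit vector $\xi$ and $u \in BD(\Omega)$, the scalar field $u_\xi := \langle u, \xi\rangle$ has distributional directional derivative $\partial_\xi u_\xi = \langle (Eu)\,\xi, \xi\rangle$, since the skew-symmetric part of $Du$ drops out of this bilinear contraction; in particular $\partial_\xi u_\xi$ is a bounded Radon measure. By the slicing theory for measures, for $\mathcal H^{N-1}$-a.e.\ line $\ell$ parallel to $\xi$ the restriction of $u_\xi$ to $\ell$ is a one-variable $BV$ function whose derivative is the $\ell$-slice of $\langle (Eu)\,\xi, \xi\rangle$. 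Since a function of one real variable of bounded variation is approximately differentiable $\mathcal L^1$-a.e., with derivative equal to the density of the absolutely continuous part of its derivative measure, a Fubini argument over the $(N-1)$-dimensional space of lines, together with the Lebesgue decomposition of $Eu$, yields for each fixed $\xi$ an $\mathcal L^N$-null set $Z_\xi$ such that for $x \notin Z_\xi$
$$\lim_{t \to 0}\frac{\langle u(x+t\xi),\xi\rangle - \langle u(x),\xi\rangle}{t} = \langle \E u(x)\,\xi, \xi\rangle.$$
Applying Fubini once more, now to the subset of $\Omega \times S^{N-1}$ where this fails (each fixed-$\xi$ section being $\mathcal L^N$-null), one concludes that for $\mathcal L^N$-a.e.\ $x$ the limit holds for $\mathcal H^{N-1}$-a.e.\ $\xi$.

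\emph{Step 2: approximate symmetric differentiability \eqref{apsymdif}.} Writing $y = x + t\omega$ with $\omega \in S^{N-1}$ and passing to polar coordinates, the left-hand side of \eqref{apsymdif} equals exactly
$$\frac{1}{\e^N}\int_{S^{N-1}}\left(\int_0^\e \left|\frac{\langle u(x+t\omega),\omega\rangle - \langle u(x),\omega\rangle}{t} - \langle \E u(x)\,\omega, \omega\rangle\right| t^{N-1}\, dt\right) d\mathcal H^{N-1}(\omega).$$
For $\mathcal H^{N-1}$-a.e.\ $\omega$ the inner bracket tends to $0$ as $\e \to 0$ by Step 1 and the one-dimensional Lebesgue differentiation theorem applied to the $BV$ slice $t \mapsto \langle u(x+t\omega),\omega\rangle$. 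To pass the limit inside the $S^{N-1}$-integral one needs an integrable majorant in $\omega$: this is supplied by a Hardy--Littlewood maximal estimate for the difference quotients on the one-dimensional sections, whose $\omega$-integral is controlled by $|Eu|(\Omega)$ (each slice measure $\langle (Eu)\,\omega,\omega\rangle$ has total variation at most $|Eu|(\Omega)$). Discarding the further $\mathcal L^N$-null set where this maximal function is infinite and invoking dominated convergence establishes \eqref{apsymdif}.

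\emph{Step 3: the full approximate gradient \eqref{apdifBD} and the weak-type bound.} The quadratic form $\omega \mapsto \langle M\omega,\omega\rangle$ sees only the symmetric part of $M$, so Steps 1--2 do not yet produce the matrix $\nabla u(x)$ required in \eqref{apdifBD}. Here one exploits the elliptic (Saint-Venant) structure: from the distributional identity
$$\partial_j\partial_k u_i = \partial_k (Eu)_{ij} + \partial_j (Eu)_{ik} - \partial_i (Eu)_{jk},$$
the skew part $w_{ij} := \partial_i u_j - \partial_j u_i$ satisfies
$$\partial_k w_{ij} = 2\big(\partial_i (Eu)_{jk} - \partial_j (Eu)_{ik}\big),$$
so, after localising $u$ with a cut-off and convolving with the Newtonian potential, $w_{ij}$ — and hence the whole distribution $Du$ — is represented as a Calder\'on--Zygmund singular integral of the measure $Eu$, plus a smooth remainder. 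Since such operators map $\mathcal M(\Rb^N)$ into $L^{1,\infty}$, this produces simultaneously an $\mathcal L^N$-a.e.\ finite representative $x \mapsto \nabla u(x)$, whose symmetric part is $\E u(x)$, and the estimate $\mathcal L^N(\{|\nabla u| > t\}) \leq C(N,\Omega)\,t^{-1}\|u\|_{BD(\Omega)}$. Finally, at every $x$ that is a common Lebesgue point of $u$ (using also $BD(\Omega)\hookrightarrow L^{N/(N-1)}$ for a finer choice), of $\E u$, and a point of approximate continuity of the representative $\nabla u$, and at which \eqref{apsymdif} holds, a rescaling/Poincar\'e argument on $B_\e(x)$ — subtracting the affine map $y \mapsto u(x) + \nabla u(x)(y-x)$ — upgrades the convergence to the first-order expansion \eqref{apdifBD}.

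\emph{Main obstacle.} The delicate point is Step 3, the production of the genuine matrix $\nabla u(x)$: because $Du$ is \emph{not} a measure for a general $u \in BD(\Omega)$ (only $Eu$ is), the approximate gradient cannot be obtained by Calder\'on--Zygmund differentiation of $Du$ as one does in $BV$; one must recover the missing skew components through the elliptic identity above, and it is precisely this singular-integral step that also forces the weak-type $(1,1)$ bound rather than an $L^1$ bound. Once $\nabla u(x)$ is available, \eqref{apsymdif} in fact follows from \eqref{apdifBD} by a routine dyadic-annuli estimate, so both displayed limits and the final inequality reduce to this construction.
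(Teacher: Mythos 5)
The paper does not prove this theorem: it is quoted as a preliminary, with a pointer to \cite[Theorem 4.3]{ACDM} and \cite[Theorem 2.5]{E2}, so the only meaningful comparison is with those sources. Your overall plan — one-dimensional slicing to handle the symmetric expression, and recovery of the missing skew-symmetric part of $Du$ via the Saint--Venant identity, a Newtonian-potential localisation and a Calder\'on--Zygmund representation of the distributional gradient as a singular integral of the measure $Eu$, which yields the weak-$(1,1)$ bound — is indeed the known route to this result, and you correctly identify the skew part as the crux (this is exactly why the statement is nontrivial compared with $BV$).

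Two steps, however, have genuine gaps as written. In Step 2 the dominated-convergence argument is not justified: the directional Hardy--Littlewood maximal function of the sliced measures is only in weak-$L^1$, so its integral over $\omega\in S^{N-1}$ is not ``controlled by $|Eu|(\Omega)$'' and no integrable majorant exists in general (already for a single jump piece the natural majorant behaves like $\mathrm{dist}(x,J_u)^{-1}$, and for countably many pieces summability for a.e.\ $x$ is exactly what would have to be proved). The usual fixes are either to bound the ball average in \eqref{apsymdif} directly by $\e^{-N}$ times the mass of $|Eu-\E u(x)\LL^N|$ on $B(x,C\e)$ through the polar/slicing identity plus Besicovitch differentiation, or — as you yourself observe at the end — to deduce \eqref{apsymdif} from \eqref{apdifBD} once one knows that the symmetric part of $\nabla u(x)$ is $\E u(x)$ a.e. More seriously, the final step of Step 3 is where the actual content of the theorem lies and it is dispatched too quickly: a.e.\ finiteness of the weak-$L^1$ representative $\nabla u$ together with its approximate continuity does not give the first-order $L^1$ expansion \eqref{apdifBD}, and the only Poincar\'e-type inequality available in $BD$ (Theorem \ref{Thm2.8BFT}) controls $u$ minus a \emph{rigid motion} by $|Eu|$ alone, so ``subtracting the affine map and rescaling'' is not a routine Poincar\'e argument. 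What is needed is the Calder\'on--Zygmund differentiation theorem for potentials of measures: near a.e.\ $x$ one splits $Eu$ into $\E u(x)\LL^N$ plus a remainder of vanishing density, applies the weak-$(1,1)$ estimate of the singular operator to the remainder, and removes an exceptional set by a covering argument; this is precisely the work carried out in the cited references. Until that step is supplied in detail, your proposal is a correct plan rather than a proof.
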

From \eqref{apdifBD} and \eqref{apsymdif} it follows that
$\displaystyle \E u=\frac{\nabla u+\nabla u^T}{2}$.

\medskip

We denote by $\mathcal R$ the kernel of the linear operator $E$ consisting of the class of rigid motions in $\mathbb R^N$, i.e., affine maps of the form $Mx + b$ where $M$ is a skew-symmetric $N \times N$ matrix and $b\in \mathbb R^N$. 
$\mathcal R$ is therefore closed and finite-dimensional so
it is possible to define the orthogonal projection $P : BD(\Omega)\to \mathcal R$. This operator belongs to the class considered in the following Poincar\'e-Friedrichs
type inequality for $BD$ functions (see \cite{ACDM}, \cite{K} and \cite{T}).

\begin{theorem}\label{Thm2.8BFT}
Let $\Omega$ be a bounded, connected, open subset of $\mathbb R^N$, with Lipschitz boundary, and let $R : BD(\Omega)\to \mathcal R$ be a continuous linear map which leaves the elements of $\mathcal R$ fixed. Then there exists a constant 
$C(\Omega, R)$ such that
\begin{align*}
\int_\Omega |u(x) - R(u)(x)| \, dx \leq C(\Omega,R) \, |E u|(\O), \;  \mbox{ for every } u\in BD(\Omega).
\end{align*}
\end{theorem}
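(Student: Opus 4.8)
The statement is the classical Korn--Poincar\'e inequality in $BD(\Omega)$, and the plan is to prove it by the standard contradiction-and-compactness scheme. Suppose the conclusion fails. Then for every $n \in \Nb$ there is $u_n \in BD(\Omega)$ with
$$\int_\Omega |u_n(x) - R(u_n)(x)| \, dx > n \, |Eu_n|(\Omega) \geq 0,$$
so in particular $u_n - R(u_n) \neq 0$ in $L^1(\Omega;\Rb^N)$ and we may set $v_n := (u_n - R(u_n))/\|u_n - R(u_n)\|_{L^1(\Omega;\Rb^N)}$, which satisfies $\|v_n\|_{L^1(\Omega;\Rb^N)} = 1$. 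Note that $R(u_n) \in \mathcal{R}$ is a rigid motion, hence $E R(u_n) = 0$, so that $Ev_n = Eu_n / \|u_n - R(u_n)\|_{L^1}$ and therefore $|Ev_n|(\Omega) = |Eu_n|(\Omega)/\|u_n - R(u_n)\|_{L^1} < 1/n$. Moreover, by linearity of $R$ and the fact that $R$ leaves the elements of $\mathcal{R}$ fixed (so $R(R(u_n)) = R(u_n)$), we get $R(v_n) = 0$ for every $n$.

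From $\|v_n\|_{L^1} = 1$ and $|Ev_n|(\Omega) < 1/n$ we see that $\{v_n\}$ is bounded in $BD(\Omega)$; by the compact embedding of $BD(\Omega)$ into $L^1(\Omega;\Rb^N)$ (Theorem~\ref{THM2.10BFT}, used with any $q \in [1, N/(N-1))$ together with the boundedness of $\Omega$), there is a subsequence $\{v_{n_k}\}$ converging in $L^1(\Omega;\Rb^N)$ to some $v$, and necessarily $\|v\|_{L^1(\Omega;\Rb^N)} = 1$, so $v \neq 0$. By the lower semicontinuity inequality \eqref{Elsc} and $|Ev_{n_k}|(\Omega) \to 0$ we obtain $|Ev|(\Omega) \leq \liminf_k |Ev_{n_k}|(\Omega) = 0$, hence $Ev = 0$; since $\Omega$ is connected this forces $v \in \mathcal{R}$.

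To reach a contradiction, I would introduce $w_k := v_{n_k} - v \in BD(\Omega)$. Then $\|w_k\|_{L^1(\Omega;\Rb^N)} \to 0$ and, because $v \in \mathcal{R}$ gives $Ev = 0$, also $|Ew_k|(\Omega) = |Ev_{n_k}|(\Omega) \to 0$; thus $w_k \to 0$ in the norm of $BD(\Omega)$. Continuity of $R$ yields $R(w_k) \to R(0) = 0$ in the finite-dimensional space $\mathcal{R}$. On the other hand, $R(w_k) = R(v_{n_k}) - R(v) = 0 - v = -v$ for every $k$, using $R(v_{n_k}) = 0$ and the fact that $R$ fixes $v \in \mathcal{R}$. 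Hence $v = 0$, contradicting $\|v\|_{L^1(\Omega;\Rb^N)} = 1$, and the theorem follows.

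The argument is routine, and the only delicate point is the limit passage through the operator $R$: the subsequence $\{v_{n_k}\}$ is only known to converge in $L^1$, not in the $BD$-norm in which $R$ is continuous, so one cannot apply continuity of $R$ directly to $\{v_{n_k}\}$. The device that fixes this is to apply it instead to $w_k = v_{n_k} - v$, whose symmetrised-gradient part has vanishing total variation precisely because the $L^1$-limit $v$ is a rigid motion; this upgrades the convergence to the $BD$-norm. It is also worth noting where the hypotheses enter: connectedness of $\Omega$ ensures that $\ker E = \mathcal{R}$ (so that $v \in \mathcal{R}$ in the step above), while the Lipschitz regularity of $\partial\Omega$ is what makes the compact embedding available.
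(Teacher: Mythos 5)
Your proof is correct. Note that the paper does not prove this theorem at all: it is quoted as a known Korn--Poincar\'e-type result with references to \cite{ACDM}, \cite{K} and \cite{T}, and the contradiction-plus-compactness argument you give is essentially the classical proof found in those sources, using exactly the ingredients the paper records (the compact embedding of Theorem~\ref{THM2.10BFT} and the lower semicontinuity \eqref{Elsc}, with connectedness guaranteeing $\ker E=\mathcal{R}$). Your handling of the only delicate point is also right: since $R$ is only assumed continuous for the $BD$-norm while the subsequence converges merely in $L^1$, passing to $w_k=v_{n_k}-v$, whose $BD$-norm tends to zero because the limit $v$ is a rigid motion, is precisely what legitimises the limit passage $R(w_k)\to 0$ and yields $v=0$, the desired contradiction.
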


\subsection{Notions of Quasiconvexity}\label{defqcx}

\begin{definition}
[\cite{BFT}, Definition 3.1]\label{BFTsqcx}
A Borel measurable 
function $f:\mathbb R^{N\times N}_s\to \mathbb R$ is said to be 
{\em symmetric quasiconvex} if
\begin{equation}\label{sqcx}
f(\xi)\leq\int_Q 
f(\xi+\E \varphi(x)) \, dx,
\end{equation}
for every $\xi \in \mathbb R^{N\times N}_s$ and for 
every $\varphi \in C^{\infty}_{\rm per}(Q;\mathbb R^N)$.  
\end{definition}

\begin{remark}\label{LDper}
{\rm The above property \eqref{sqcx} is independent of the size, orientation and centre of the cube over which the integration is performed. Also, if $f$ is upper semicontinuous and locally bounded from above, using Fatou's Lemma and the density of smooth functions in $LD(Q)$, it follows that in \eqref{sqcx} $C^{\infty}_{\rm per}(Q;\mathbb R^N)$ may be replaced by $LD_{\rm per}(Q).$}
\end{remark}

Given $f:\mathbb R^{N\times N}_s\to \mathbb R$, the symmetric quasiconvex envelope of $f$, $SQf$, is defined by
\begin{equation}\label{sqcxenv}
SQf(\xi) : = \inf \bigg\{\int_Q f(\xi+\E \varphi(x)) \, dx : \varphi \in C^{\infty}_{\rm per}(Q;\mathbb R^N)\bigg\}.
\end{equation}
It is possible to show that $SQf$ is the greatest symmetric quasiconvex function that is less than or equal to $f$. Moreover, definition \eqref{sqcxenv} is independent of the domain, i.e.
\begin{align}\label{sqcx0}SQf(\xi) : = \inf \bigg\{\frac{1}{\L ^N(D)}\int_{D} f(\xi+\E \varphi(x)) \, dx : \varphi \in C^{\infty}_{0}(D;\mathbb R^N)\bigg\}
\end{align}
whenever $D \subset \Rb^N$ is an open, bounded set with $\L ^N(\partial D) = 0$.

In \cite{E}, a Borel measurable function $f:\mathbb R^{N\times N}_s\to \mathbb R$ is said to be symmetric quasiconvex if and only if 
\begin{align}\label{SQfE}
f(\xi)\leq \frac{1}{\mathcal L^N(D)}\int_D f(\xi +\E\varphi(x)) \, dx 
\hbox{ for all }\varphi \in W^{1,\infty}_0(D;\mathbb R^N),
\end{align}
and it is stated that $f$ is symmetric quasiconvex if and only if $f\circ \pi$ is quasiconvex in the sense of Morrey, 
where $\pi$ is the projection of $\mathbb R^{N \times N}$ onto 
$\mathbb R^{N\times N}_{s}$. 

Let us show that these two notions coincide. Observe first that, for any 
$\varphi \in C^\infty_0(D;\mathbb R^N)$,
\begin{align}\label{SQfestW11}
SQf(\xi)\leq \frac{1}{\mathcal L^N(D)}\int_D SQf(\xi+ \E \varphi(x))dx=
\frac{1}{\mathcal L^N(D)}\int_D (SQf \circ \pi)(\xi+ \nabla\varphi(x))dx.
\end{align} 
If $f$ is upper semicontinuous and satisfies a growth condition from above as in \eqref{growthint}, then $SQf$ in \eqref{sqcx0} is symmetric quasiconvex also in the sense of \cite{E}. Indeed, $SQf$ satisfies the same growth condition \eqref{growthint} and a density argument as in \cite{BM} shows that 
$SQf \circ \pi$ is $W^{1,1}$-quasiconvex, hence $W^{1,\infty}$-quasiconvex, i.e., $\varphi$ can be chosen in $W^{1,\infty}_0(D;\mathbb R^N)$. Thus,
\begin{align}\label{SQfisE}
SQf(\xi)\leq \frac{1}{\mathcal L^N(D)}\int_D SQf(\xi+ \E \varphi(x)) \, dx
\leq \frac{1}{\mathcal L^N(D)}\int_D f(\xi+ \E \varphi(x)) \, dx,
\end{align}
for every $\varphi \in W^{1,\infty}_0(D;\mathbb R^N)$.
Therefore, denoting by $SQf_E$ the symmetric quasiconvexification 
\begin{align}\label{symqcxE}
SQf_E(\xi) : = \inf \bigg\{\frac{1}{\L ^N(D)}\int_{D} f(\xi+\E \varphi(x)) \, dx : \varphi \in W^{1,\infty}_{0}(D;\mathbb R^N)\bigg\},
\end{align}
and by $SQf$ the symmetric quasiconvexification defined through \eqref{sqcx0},
trivially $SQf_E\leq SQf$ and by \eqref{SQfisE} we have equality.

Actually, under linear growth conditions and upper semicontinuity of $f$, we may
also conclude that
\begin{align*}
SQf_E(\xi) : = \inf \bigg\{\frac{1}{\L^N(D)}\int_{D} f(\xi+\E \varphi(x)) \, dx : \varphi \in W^{1,1}_{0}(D;\mathbb R^N)\bigg\}.
\end{align*} 

\section{Auxiliary Results}\label{auxres}

We recall that for $u \in BD(\Omega)$ and $\chi \in BV(\Omega;\{0,1\})$ the energy under consideration is
\begin{equation}\label{F}
F(\chi,u;\Omega) : = 
\displaystyle \int_{\Omega}\chi(x) W_1({\cal E} u(x)) 
+ (1 - \chi(x))W_0(\E u(x)) \, dx + |D\chi|(\Omega), 
\end{equation}
and our aim is to obtain an integral representation for the localised relaxed functionals, defined for $A \in \mathcal O(\Omega)$, by
\begin{align}\label{calF}
\mathcal{F}\left(\chi,u;A\right)   &  
:=\inf\bigg\{ \liminf_{n\rightarrow +\infty} F(\chi_n,u_n;A): 
u_{n} \in W^{1,1}(A;\mathbb{R}^{N}),
\chi_{n} \in BV(A;\{0,1\}), \\
&  \hspace{4cm} 
u_{n}\to u\text{ in }
L^{1}(A;\mathbb{R}^{N}),
\chi_{n}\to\chi\text{ in }
L^1(A;\{0,1\})  \bigg\},\nonumber
\end{align}
\begin{align}\label{calFLD}
\mathcal{F}_{LD}\left(\chi,u;A\right)   &  
:=\inf\bigg\{ \liminf_{n\rightarrow +\infty} F(\chi_n,u_n;A): 
u_{n} \in LD(A), \chi_{n} \in BV(A;\{0,1\}), \\
&  \hspace{4cm} 
u_{n}\to u\text{ in }
L^{1}(A;\mathbb{R}^{N}),
\chi_{n}\to\chi\text{ in }
L^1(A;\{0,1\})  \bigg\},\nonumber
\end{align}
where the densities $W_i$, $i = 0,1$, are continuous functions such 
that 
\begin{equation}\label{growth}
\exists \, \alpha, \beta > 0 \text{ such that }
\alpha |\xi| \leq W_i(\xi) \leq \beta (1 + |\xi|), \; \; \forall 
\xi \in \Rb^{N \times N}_s
\end{equation}
and where, for purposes of notation, we let 
$f: \{0,1\} \times \mathbb R^{N \times N}_s \to [0,+\infty)$
be defined as
\begin{equation}\label{density}
f\left( q,\xi\right)  :=q  W_1(\xi)+ (1-q)W_0(\xi). 
\end{equation}

It follows from the definition of the recession function \eqref{recS} and from the growth conditions \eqref{growth} that for every $q \in \{0,1\}$ and every 
$\xi \in \Rb^{N \times N}_s$
\begin{equation}\label{finftygr}
\alpha |\xi| \leq f^{\infty}(q,\xi) \leq \beta |\xi|.
\end{equation}

It is an immediate consequence of \eqref{growth} that 
\begin{equation}\label{G}
|f(q_1,\xi) - f(q_2,\xi)| \leq \beta \, |q_1 - q_2|(1 + |\xi|), \; \;
\forall q_1, q_2 \in \{0,1\},
\forall \xi \in \Rb^{N \times N}_s,
\end{equation}
from which it follows that 
\begin{equation}\label{Gfinfty}
|f^{\infty}(q_1,\xi) - f^{\infty}(q_2,\xi)| \leq \beta \, |q_1 - q_2| \,|\xi|, \; \;
\forall q_1, q_2 \in \{0,1\},
\forall \xi \in \Rb^{N \times N}_s.
\end{equation}

The following additional hypothesis will be used to write the density of the jump term in the form given in \eqref{KSQ}
\begin{equation}\label{finfty}
\exists \, 0 < \gamma \leq 1, \exists \, C, L > 0 : \; t\,|\xi| > L \Rightarrow 
\left| f^{\infty}(q,\xi) - \frac{f(q,t\xi)}{t}\right| 
\leq C \frac{|\xi|^{1-\gamma}}{t^\gamma},
\end{equation}
for every $q \in \{0,1\}$ and every $\xi \in \Rb^{N \times N}_s$.
As pointed out in \cite{FM}, this can be stated equivalently as
\begin{equation}\label{finfty2}
\exists \, 0 < \gamma \leq 1, \exists \, C > 0 \mbox{ such that }
\left| f^{\infty}(q,\xi) - f(q,\xi)\right| 
\leq C \left(1 + |\xi|^{1-\gamma}\right),
\end{equation}
for every $q \in \{0,1\}$ and every $\xi \in \Rb^{N \times N}_s$.

Under our assumed growth conditions \eqref{growth}, we observe that if $f$ satisfies \eqref{finfty}, or equivalently \eqref{finfty2}, then the same holds for its symmetric quasiconvex envelope $SQf$.
To this end, we recall that, under the hypothesis \eqref{growth}, the
recession function of a symmetric quasiconvex function is still symmetric quasiconvex (see \cite[Remarks 8 and 9]{R}) and we begin by stating the following results (cf. \cite[(iv) and (v) in Remark 3.2]{CZ} and \cite[Propositions 2.6, 2.7]{RZ} for the quasiconvex counterpart).

\begin{proposition}\label{SQfinfty=}
Let $f:\{0,1\} \times \mathbb R^{N \times N}_s\to [0, + \infty)$ be a continuous function as in \eqref{density} and satisfying \eqref{growth} and \eqref{finfty}. Let $f^\infty$ and $SQf$ be its recession function and its symmetric quasiconvex envelope, defined by \eqref{recS} and \eqref{sqcxenv}, respectively.
Then
\begin{equation}\label{QWinf=}
SQ(f^\infty)(q,\xi)= (SQf)^\infty(q,\xi) \;\;\;\hbox{ for every }(q,\xi) \in \{0,1\} \times \mathbb R^{N \times N}_s.
\end{equation}
\end{proposition}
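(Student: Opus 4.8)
The plan is to prove the two inequalities $SQ(f^\infty) \leq (SQf)^\infty$ and $(SQf)^\infty \leq SQ(f^\infty)$ separately, using the hypothesis \eqref{finfty} to commute the order of taking recession functions and symmetric quasiconvex envelopes. Throughout, $q \in \{0,1\}$ is fixed, so one works with functions of $\xi \in \mathbb R^{N\times N}_s$ alone; the growth conditions \eqref{growth}, \eqref{finftygr} and the comparability $\alpha|\xi| \leq SQf(q,\xi) \leq \beta(1+|\xi|)$ (which holds because $\alpha|\cdot|$ is convex, hence symmetric quasiconvex, and lies below $f$, while $SQf \leq f$) will be used repeatedly, as will the fact, recalled from \cite[Remarks 8, 9]{R}, that the recession function of a symmetric quasiconvex function is again symmetric quasiconvex.

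For the inequality $(SQf)^\infty \leq SQ(f^\infty)$: since $SQf \leq f$, one has $(SQf)^\infty \leq f^\infty$ directly from the definition \eqref{recS} (the $\limsup$ is monotone). As $(SQf)^\infty$ is symmetric quasiconvex and $SQ(f^\infty)$ is the \emph{greatest} symmetric quasiconvex function below $f^\infty$, we conclude $(SQf)^\infty \leq SQ(f^\infty)$. This direction needs neither \eqref{finfty} nor any growth bound beyond what makes the envelopes well defined.

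For the reverse inequality $SQ(f^\infty) \leq (SQf)^\infty$, I would argue as follows. Fix $\xi$ and $t > 0$ large. Using \eqref{sqcxenv} for $SQf$ at the point $t\xi$, choose $\varphi \in C^\infty_{\rm per}(Q;\mathbb R^N)$ with
\begin{equation*}
\int_Q f(q, t\xi + \E\varphi(x))\,dx \leq SQf(q,t\xi) + 1.
\end{equation*}
Writing $\psi := \varphi/t$, so that $t\xi + \E\varphi = t(\xi + \E\psi)$, divide by $t$ and apply \eqref{finfty} pointwise (on the set where $t\,|\xi + \E\psi(x)| > L$, controlling the remaining set by the growth bound \eqref{growth}, whose contribution is $O(1/t)$) to obtain
\begin{equation*}
\int_Q f^\infty(q, \xi + \E\psi(x))\,dx \leq \frac{SQf(q,t\xi)}{t} + \frac{C}{t^\gamma}\int_Q |\xi + \E\psi(x)|^{1-\gamma}\,dx + \frac{C'}{t}.
\end{equation*}
The middle term is controlled by Jensen's (or Hölder's) inequality together with the bound $\int_Q |\E\psi|\,dx \leq \frac{1}{t}\int_Q |\E\varphi|\,dx \leq \frac{C}{t}(1 + |\xi|)$ coming from \eqref{growth}; hence it is $O(t^{-\gamma})$. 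Since $\psi$ is an admissible test function in \eqref{sqcxenv} for $f^\infty$, the left side is $\geq SQ(f^\infty)(q,\xi)$. Letting $t \to +\infty$ and recalling that $\limsup_{t\to\infty} SQf(q,t\xi)/t = (SQf)^\infty(q,\xi)$ gives $SQ(f^\infty)(q,\xi) \leq (SQf)^\infty(q,\xi)$, as desired. Combining the two inequalities yields \eqref{QWinf=}.

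The main obstacle is the second inequality, specifically the bookkeeping in passing from $f$ to $f^\infty$ inside the integral: one must split $Q$ into the region where $t\,|\xi+\E\psi(x)|>L$ (where \eqref{finfty} applies) and its complement (where one uses $|f^\infty - f/t \cdot t| \leq$ const via \eqref{growth}, \eqref{finftygr}), and then show the error terms vanish as $t\to\infty$ uniformly enough. The key quantitative input is the a priori bound $\int_Q|\E\varphi|\,dx \leq C(1+|\xi|)$ on near-optimal test functions, which follows from the lower bound $\alpha|\cdot|$ and the upper bound $\beta(1+|\cdot|)$ in \eqref{growth} applied to the near-minimality inequality; without it the $t^{-\gamma}\int_Q|\xi+\E\psi|^{1-\gamma}$ term cannot be shown to be negligible. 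Everything else is routine, and the argument is the exact symmetric-gradient analogue of \cite[Propositions 2.6, 2.7]{RZ} and \cite[Remark 3.2]{CZ}.
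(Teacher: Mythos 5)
Your proof is correct, and it supplies an argument the paper itself omits: in the paper this proposition is only stated, with a pointer to the quasiconvex counterparts in \cite{CZ} and \cite{RZ} and to the fact (from \cite{R}) that recession functions of symmetric quasiconvex functions with linear growth are again symmetric quasiconvex; your two-inequality scheme (the easy direction via maximality of the envelope, the hard direction by rescaling a near-optimal test function for $SQf(q,t\xi)$ and invoking \eqref{finfty} off the set $\{t|\xi+\E\psi|\le L\}$) is precisely the standard argument those references use, transplanted to symmetric gradients. One intermediate bound is misstated: from coercivity and near-minimality one gets $\int_Q|t\xi+\E\varphi|\,dx\le \alpha^{-1}\bigl(SQf(q,t\xi)+1\bigr)\le C\,t(1+|\xi|)$, hence $\int_Q|\xi+\E\psi|\,dx\le C(1+|\xi|)$ and $\int_Q|\E\psi|\,dx\le C(1+|\xi|)$, not $\tfrac{C}{t}(1+|\xi|)$ as you wrote ($\varphi$ is a near-minimiser at the large matrix $t\xi$, so $\int_Q|\E\varphi|$ scales like $t$). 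This does not damage the proof, since all you need is a bound on $\int_Q|\xi+\E\psi|^{1-\gamma}\,dx$ uniform in $t$, which the corrected estimate gives, so the error term is still $O(t^{-\gamma})$ and the conclusion $SQ(f^\infty)\le (SQf)^\infty$ follows as you indicate.
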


\begin{proposition}\label{propperH5}	
Let $f:\{0,1\} \times \mathbb R^{N \times N}_s\to [0, + \infty)$ be a continuous function as in \eqref{density}, satisfying \eqref{growth} and \eqref{finfty}. 
Then, there exist $\gamma\in [0,1)$ and $C>0$ such that
$$ \displaystyle\left| (SQf)^\infty(q,\xi)- SQf(q, \xi)\right|
\leq C\big( 1+|\xi|^{1-\gamma}\big), \;\;
\forall\ (q,\xi)\in \{0,1\} \times \mathbb R^{N \times N}_s.$$
\end{proposition}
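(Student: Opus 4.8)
The plan is to reduce the statement to a comparison between the two symmetric quasiconvex \emph{envelopes} $SQf$ and $SQ(f^\infty)$, and then to invoke Proposition~\ref{SQfinfty=}. Precisely, I would first prove that
\[
\bigl|SQf(q,\xi)-SQ(f^\infty)(q,\xi)\bigr|\le C\bigl(1+|\xi|^{1-\gamma}\bigr)\qquad\text{for every }(q,\xi)\in\{0,1\}\times\Rb^{N\times N}_s,
\]
with $\gamma$ the exponent appearing in \eqref{finfty2}, and then conclude, since Proposition~\ref{SQfinfty=} identifies $SQ(f^\infty)(q,\xi)$ with $(SQf)^\infty(q,\xi)$. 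Throughout one fixes $q\in\{0,1\}$ and argues fiberwise; as $f(q,\cdot)$ and $f^\infty(q,\cdot)$ are Borel with linear growth ($f^\infty$ by \eqref{finftygr}), one is free to use the periodic description \eqref{sqcxenv} of the envelope (or any of the equivalent ones from Subsection~\ref{defqcx}).

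For the inequality $SQ(f^\infty)(q,\xi)\le SQf(q,\xi)+C(1+|\xi|^{1-\gamma})$, the idea is to fix $\xi$, $\e>0$, and pick $\varphi\in C^\infty_{\rm per}(Q;\Rb^N)$ with $\int_Q f(q,\xi+\E\varphi)\,dx\le SQf(q,\xi)+\e$. Using $\varphi=0$ as a competitor and the bounds in \eqref{growth}, one gets $\alpha\int_Q|\xi+\E\varphi|\,dx\le f(q,\xi)+\e\le\beta(1+|\xi|)+\e$. Since $1-\gamma\in[0,1]$, the map $t\mapsto t^{1-\gamma}$ is concave on $[0,+\infty)$, so Jensen's inequality together with $(a+b)^{1-\gamma}\le a^{1-\gamma}+b^{1-\gamma}$ bounds $\int_Q|\xi+\E\varphi|^{1-\gamma}\,dx$ by $C(1+|\xi|^{1-\gamma})$ up to an $\e$-dependent remainder that vanishes as $\e\to0^+$. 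Integrating the pointwise bound $f^\infty(q,\cdot)\le f(q,\cdot)+C(1+|\cdot|^{1-\gamma})$ from \eqref{finfty2} over $Q$ against this $\varphi$, and letting $\e\to0^+$, then gives the estimate. The reverse inequality $SQf(q,\xi)\le SQ(f^\infty)(q,\xi)+C(1+|\xi|^{1-\gamma})$ is obtained symmetrically, now choosing a near-optimal $\varphi$ for $SQ(f^\infty)(q,\xi)$, using the lower bound $\alpha|\cdot|\le f^\infty(q,\cdot)$ from \eqref{finftygr} to control $\|\xi+\E\varphi\|_{L^1(Q)}$ by $C(1+|\xi|)$, controlling $\int_Q|\xi+\E\varphi|^{1-\gamma}\,dx$ by Jensen, and integrating the reverse pointwise bound $f(q,\cdot)\le f^\infty(q,\cdot)+C(1+|\cdot|^{1-\gamma})$. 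Combining the two inequalities and replacing $SQ(f^\infty)$ by $(SQf)^\infty$ via Proposition~\ref{SQfinfty=} completes the argument.

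I do not expect a serious obstacle; once Proposition~\ref{SQfinfty=} is available the statement is essentially a bookkeeping exercise. The one point requiring care is that the admissible perturbations $\E\varphi$ in the definition of the envelope are not bounded a priori, so one cannot integrate the pointwise estimate \eqref{finfty2} against an arbitrary test field — the term $\int_Q|\xi+\E\varphi|^{1-\gamma}\,dx$ would then be uncontrolled. Restricting to near-optimal $\varphi$ and exploiting the coercivity (lower growth) of $f$, resp.\ of $f^\infty$, to bound $\|\xi+\E\varphi\|_{L^1(Q)}$, followed by Jensen's inequality for the concave power $t\mapsto t^{1-\gamma}$, is exactly what makes the estimate uniform in $\xi$; this is the symmetric-gradient analogue of the scheme used for the quasiconvex counterparts in \cite[(iv) and (v) in Remark 3.2]{CZ} and \cite[Propositions 2.6, 2.7]{RZ}.
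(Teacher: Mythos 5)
Your argument is correct and is essentially the intended one: you reduce the statement to comparing $SQf$ with $SQ(f^\infty)$ via Proposition~\ref{SQfinfty=}, and you estimate each envelope against the other by testing with near-optimal periodic fields, using the coercivity in \eqref{growth} and \eqref{finftygr} to bound $\|\xi+\E\varphi\|_{L^1(Q)}$ by $C(1+|\xi|)$ and then integrating the pointwise bound \eqref{finfty2} with Jensen's inequality for the concave power $t\mapsto t^{1-\gamma}$. The paper gives no proof of this proposition, deferring to the quasiconvex counterparts in \cite[(iv) and (v) in Remark 3.2]{CZ} and \cite[Propositions 2.6, 2.7]{RZ}, and your scheme is precisely the symmetric-gradient version of that argument, so there is nothing to correct.
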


The growth conditions \eqref{growth}, as well as standard diagonalisation arguments, allow us to prove the following properties of the functional 
$\mathcal F (\chi,u;A)$ defined in \eqref{calF}.

\begin{proposition}\label{firstprop}
Let $A \in \mathcal O(\Omega)$, $u \in BD(A)$, $\chi \in BV(A;\{0,1\})$ and 
$F(\chi,u;A)$ be given by \eqref{F}. If $W_i$, $i = 0,1$, satisfy \eqref{growth}, then
\begin{itemize}
\item[i)] there exists $C > 0$ such that 
$$C\left(|Eu|(A) + |D\chi|(A)\right) \leq \mathcal F (\chi,u;A) \leq 
C\left(\L^N(A) + |Eu|(A) + |D\chi|(A)\right);$$
\item[ii)] $\mathcal F (\chi,u;A)$ is always attained, that is, there exist sequences
$\{u_n\} \subset  W^{1,1}(A;\mathbb{R}^{N})$ and
$\{\chi_{n}\} \subset BV(A;\{0,1\})$ such that $u_{n}\to u$ in $L^{1}(A;\mathbb{R}^{N})$,
$\chi_{n}\to\chi$ in $L^1(A;\{0,1\})$ and
$$\mathcal F(\chi,u;A) = \lim_{n\rightarrow\infty}F(\chi_n,u_n;A);$$
\item[iii)] if $\{u_n\} \subset  W^{1,1}(A;\mathbb{R}^{N})$ and
$\{\chi_{n}\} \subset BV(A;\{0,1\})$ are such that $u_{n}\to u$ in $L^{1}(A;\mathbb{R}^{N})$ and 
$\chi_{n}\to\chi$ in $L^1(A;\{0,1\})$, then
$$\mathcal F(\chi,u;A) \leq \liminf_{n\to +\infty}\mathcal F(\chi_n, u_n;A).$$
\end{itemize}
\end{proposition}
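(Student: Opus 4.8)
The plan is to treat the three items separately: item (i) follows directly from the two-sided growth condition \eqref{growth} together with the lower semicontinuity of the relevant total variations, while items (ii) and (iii) are obtained by the usual diagonalisation scheme, exploiting the metrisability of $L^1$-convergence.

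For the lower bound in (i), I would take an arbitrary admissible pair $(\chi_n,u_n)$, $u_n\in W^{1,1}(A;\mathbb R^N)$, $\chi_n\in BV(A;\{0,1\})$, with $u_n\to u$ and $\chi_n\to\chi$ in $L^1(A)$. Since $u_n$ is Sobolev, $Eu_n=\mathcal E u_n\,\mathcal L^N\lfloor A$, so coercivity in \eqref{growth} gives $F(\chi_n,u_n;A)\ge\alpha|Eu_n|(A)+|D\chi_n|(A)$; passing to the $\liminf$, using \eqref{Elsc} for the first term and the $L^1_{\mathrm{loc}}$-lower semicontinuity of $\chi\mapsto|D\chi|(A)$ for the second, and then taking the infimum over admissible sequences, yields $\mathcal F(\chi,u;A)\ge\min\{\alpha,1\}\big(|Eu|(A)+|D\chi|(A)\big)$. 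For the upper bound I would keep $\chi_n\equiv\chi$ and pick $u_n\in W^{1,1}(A;\mathbb R^N)$ approximating $u$ in the intermediate topology of $BD(A)$, i.e. $u_n\to u$ in $L^1(A)$ and $|Eu_n|(A)\to|Eu|(A)$; the bound $f(q,\xi)\le\beta(1+|\xi|)$ then gives $F(\chi,u_n;A)\le\beta\mathcal L^N(A)+\beta|Eu_n|(A)+|D\chi|(A)$, and letting $n\to\infty$ produces $\mathcal F(\chi,u;A)\le\max\{\beta,1\}\big(\mathcal L^N(A)+|Eu|(A)+|D\chi|(A)\big)$; in particular $\mathcal F(\chi,u;A)<+\infty$.

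For (ii), I would run a diagonal extraction. By definition of the infimum, for each $k\in\mathbb N$ choose admissible sequences $\{(\chi^k_n,u^k_n)\}_n$ with $\liminf_n F(\chi^k_n,u^k_n;A)\le\mathcal F(\chi,u;A)+\tfrac1k$, and then $n(k)$ so large that $\|u^k_{n(k)}-u\|_{L^1(A)}+\|\chi^k_{n(k)}-\chi\|_{L^1(A)}<\tfrac1k$ and $F(\chi^k_{n(k)},u^k_{n(k)};A)\le\mathcal F(\chi,u;A)+\tfrac2k$. The diagonal pair $(\chi^k_{n(k)},u^k_{n(k)})$ is admissible for $\mathcal F(\chi,u;A)$, so along it $\liminf_k F\ge\mathcal F(\chi,u;A)$ whereas $\limsup_k F\le\mathcal F(\chi,u;A)$; hence the limit exists and equals $\mathcal F(\chi,u;A)$, giving attainment. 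Item (iii) is the same mechanism one level up: after passing to a subsequence so that $\mathcal F(\chi_n,u_n;A)\to\liminf_n\mathcal F(\chi_n,u_n;A)$ (assumed finite, otherwise nothing to prove), apply (ii) to each $(\chi_n,u_n)$ to get an optimal admissible pair, and diagonalise to obtain $(\eta_n,w_n)$ with $\eta_n\in BV(A;\{0,1\})$, $w_n\in W^{1,1}(A;\mathbb R^N)$, $\eta_n\to\chi$ and $w_n\to u$ in $L^1(A)$, and $F(\eta_n,w_n;A)-\mathcal F(\chi_n,u_n;A)\to0$; then by the definition of $\mathcal F$, $\mathcal F(\chi,u;A)\le\liminf_n F(\eta_n,w_n;A)=\liminf_n\mathcal F(\chi_n,u_n;A)$.

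The diagonalisation steps and the lower semicontinuity ingredients are routine; the one point requiring genuine care is the upper bound in (i). The recovery sequence must lie in $W^{1,1}(A;\mathbb R^N)$, whereas the naive mollification of a $BD$ function a priori sits only in $LD(A)$ (and $W^{1,1}(A)\subsetneq LD(A)$ by Ornstein's counterexample, so one cannot control the full gradient by the symmetrised one), and moreover $A\in\mathcal O(\Omega)$ need not have a regular boundary, so Theorem~\ref{densitysmooth} does not apply verbatim. I would resolve this by combining a strict approximation of $u\in BD(A)$ by smooth functions on an exhaustion of $A$ by open sets with Lipschitz boundary with the trace-preserving, $LD$-to-$W^{1,1}$ approximation of Theorem~\ref{densitysmooth}, so as to produce $u_n\in W^{1,1}(A;\mathbb R^N)$ with $u_n\to u$ in $L^1(A)$ and $|Eu_n|(A)\to|Eu|(A)$, which is what the argument above needs.
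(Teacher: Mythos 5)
Your proof is correct and takes essentially the same route as the paper's own (two-line) argument: the lower bound in (i) from the coercivity in \eqref{growth} combined with \eqref{Elsc} and the $L^1$-lower semicontinuity of the total variation, the upper bound by taking $\chi_n\equiv\chi$ together with an approximating sequence for $u$, and (ii), (iii) by standard diagonalisation using the metrisability of $L^1$-convergence. The only difference is that you make explicit the construction of the $W^{1,1}$ recovery sequence with $|Eu_n|(A)\to|Eu|(A)$ for a general open set $A$, a point the paper leaves implicit (relying on the smooth approximation behind Theorem~\ref{densitysmooth} and Remark~\ref{remkW11LD}), so your extra care there goes beyond, but is consistent with, what the paper records.
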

\begin{proof}
$i)$ The upper bound follows from the growth condition from above of $W_i$, 
$i  =0,1$ and by fixing $\chi_n = \chi$ as a test sequence for 
$\mathcal F (\chi,u;A)$, whereas the lower bound is a consequence of the inequality from below in \eqref{growth}, \eqref{Elsc} and the lower semicontinuity of the total variation of Radon measures.

The conclusions in $ii)$ and $iii)$ follow by standard diagonalisation arguments.
\end{proof}

\begin{remark}{\rm Analogous conclusions also hold for the functional 
$\mathcal F_{LD} (\chi,u;A)$.}
\end{remark}

\begin{remark}\label{remkW11LD}{\rm 
Assuming that the continuous functions $W_0$ and $W_1$ satisfy the growth hypothesis \eqref{growth}, it follows from the density of smooth functions in $LD(\Omega)$ and a diagonalisation argument that 
$$\mathcal{F}\left(\chi,u;A\right) = \mathcal{F}_{LD}\left(\chi,u;A\right), \;
\mbox{for every} \; \chi \in BV(A;\left\{0,1\right\}), u \in BD(\Omega),
A \in {\cal O}(\Omega).$$}
\end{remark}
\begin{proof}
As $W^{1,1}(A;\mathbb R^N) \subset LD(A)$, one inequality is trivial. In order to show the reverse one, let $\{u_n\} \subset LD(A)$, 
$\{\chi_n\} \subset BV(A;\left\{0,1\right\})$ be such that 
$u_{n}\to u$ in $L^{1}(A;\mathbb{R}^{N})$,
$\chi_{n}\to\chi$ in $L^1(A;\{0,1\})$ and
$$\mathcal{F}_{LD}\left(\chi,u;A\right) = \lim_n \left[
\int_{A}\chi_n(x) W_1({\cal E} u_n(x)) + (1 - \chi_n(x))W_0(\E u_n(x)) \, dx + |D\chi_n|(A) \right].$$
By Theorem \ref{densitysmooth}, for each $n \in \Nb$, let $v_{n,k} \in W^{1,1}(A;\mathbb R^N)$ be such that
$v_{n,k} \to u_n$ in $L^{1}(A;\mathbb{R}^{N})$, as $k \to + \infty$, and
$\E v_{n,k} \to \E u_n$ in $L^{1}(A;\mathbb{R}^{N\times N}_s)$, as $k \to + \infty$.
By passing to a subsequence, if necessary, assume also that $\displaystyle \lim_{k\rightarrow\infty}\E v_{n,k}(x) = \E u_n(x)$, for a.e. $x \in A$.
By \eqref{growth} and Fatou's Lemma we obtain
$$
\int_A\chi_n(x)\left[C(1 + |\E u_n(x)|) - W_1(\E u_n(x))\right] dx \leq
\liminf_{k\to +\infty}\int_A\chi_n(x)
\left[C(1 + |\E v_{n,k}(x)|) - W_1(\E  v_{n,k}(x))\right] dx $$
so that 
$$
\int_A\chi_n(x)W_1(\E u_n(x)) \, dx \geq
\limsup_{k\to +\infty}\int_A\chi_n(x)W_1(\E  v_{n,k}(x)) \, dx, $$
and likewise for the term involving $(1 - \chi_n)W_0$. From the previous inequalities
we conclude that
$$F(\chi_n,u_n;A) \geq \limsup_{k\to +\infty}F(\chi_n,v_{n,k};A).$$
Since $v_{n,k} \to u_n$ in $L^{1}(A;\mathbb{R}^{N})$, as $k \to + \infty$, and 
$u_{n}\to u$ in $L^{1}(A;\mathbb{R}^{N})$, by a diagonalisation argument there exists
a sequence $k_n \to + \infty$ such that $v_{n,k_n} \to u$ in $L^{1}(A;\mathbb{R}^{N})$ and 
$$F(\chi_n,v_{n,k_n};A) \leq F(\chi_n,u_n;A) + \frac{1}{k_n}.$$
As $\{\chi_n\}$, $\{v_{n,k_n}\}$ are admissible for $\mathcal{F}\left(\chi,u;A\right)$ it follows that
\begin{align*}
\mathcal{F}\left(\chi,u;A\right) \leq \liminf_{n\to +\infty}F(\chi_n,v_{n,k_n};A)
\leq \limsup_{n\to +\infty} \left(F(\chi_n,u_n;A) + \frac{1}{k_n}\right) = 
\mathcal{F}_{LD}\left(\chi,u;A\right).
\end{align*}
\end{proof}

A straightforward adaptation of the proof of 
\cite[Proposition 3.7]{BFT} yields the following result which enables us to prove the nested subadditivity property of the functional $\mathcal{F}\left(\chi,u;\cdot\right)$.

\begin{proposition}\label{slicing}
Let $A \in \mathcal O (\Omega)$ and assume that $W_0, W_1$ satisfy the growth condition \eqref{growth}. Let $\{\chi_n\} \subset BV(A;\{0,1\})$ and
$\{u_n\}, \{v_n\} \subset BD(A;\mathbb R^N)$ be sequences
satisfying $u_{n} - v_n \to 0$ in $L^{1}(A;\mathbb{R}^{N})$,
$\sup_n |Eu_n|(A) < + \infty$, $|Ev_n| \overset{\ast}{\rightharpoonup} \mu$ and
$|Ev_n| \to \mu(A)$. Then there exist subsequences $\{v_{n_k}\}$ of $\{v_n\}$, 
$\{\chi_{n_k}\}$ of $\{\chi_n\}$ and there exists a sequence $\{w_k\} \subset BD(A)$
such that $w_k = v_{n_k}$ near $\partial A$, 
$w_k - v_{n_k} \to 0$ in $L^{1}(A;\mathbb{R}^{N})$ and
$$\limsup_{k\to +\infty}F(\chi_{n_k},w_k;A) \leq 
\liminf_{n\to +\infty}F(\chi_n,u_n;A).$$
\end{proposition}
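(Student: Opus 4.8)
The plan is to follow the classical De~Giorgi slicing / averaging argument, as employed in \cite[Proposition 3.7]{BFT}, now carrying along the extra field $\chi_n$ as a passive passenger. First I would fix $A' \subset\subset A$ and an integer $M$, and slice the "annular" region $A \setminus \overline{A'}$ into $M$ concentric layers $L_i := A_i \setminus \overline{A_{i-1}}$, $i = 1, \dots, M$, using open sets $A' = A_0 \subset\subset A_1 \subset\subset \dots \subset\subset A_M = A$ whose boundaries are chosen (by a standard Fubini/coarea argument) so that $|Eu_n|$, $\mu$ and the relevant energies put no mass on $\partial A_i$. On each layer I introduce a cut-off function $\varphi_i \in C_c^\infty(A_i; [0,1])$ with $\varphi_i \equiv 1$ on $A_{i-1}$, and define the interpolated field
$$
w_k^{(i)} := \varphi_i v_{n_k} + (1 - \varphi_i) u_{n_k} = v_{n_k} + (1 - \varphi_i)(u_{n_k} - v_{n_k}).
$$
This coincides with $v_{n_k}$ on $A_{i-1}$ and with $u_{n_k}$ outside $A_i$, so it equals $v_{n_k}$ near $\partial A$ after a harmless relabeling (take $A'$ to be the set where we want to match $v_{n_k}$). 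The point of averaging over $i \in \{1, \dots, M\}$ is that $E w_k^{(i)} = \varphi_i\, Ev_{n_k} + (1-\varphi_i)\, Eu_{n_k} + \nabla\varphi_i \odot (v_{n_k} - u_{n_k})\,\mathcal L^N$, and the troublesome term $\nabla\varphi_i \odot (v_{n_k}-u_{n_k})$ has $L^1$-norm controlled by $C M \|u_{n_k} - v_{n_k}\|_{L^1(L_i)}$; summing over $i$ and using $\sum_i \|u_{n_k}-v_{n_k}\|_{L^1(L_i)} \le \|u_{n_k}-v_{n_k}\|_{L^1(A)} \to 0$, at least one index $i = i(k)$ gives an error term that is $o(1)$ uniformly, up to an extra harmless additive factor that is absorbed by first sending $k \to \infty$ and then $M \to \infty$.

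The energy estimate is then assembled as follows. On $A_{i-1}$ the field $w_k^{(i)}$ equals $v_{n_k}$; on $A \setminus A_i$ it equals $u_{n_k}$; on the single layer $L_i$ we bound $F(\chi_{n_k}, w_k^{(i)}; L_i)$ from above using the growth condition \eqref{growth} by $C(\mathcal L^N(L_i) + |Ew_k^{(i)}|(L_i) + |D\chi_{n_k}|(L_i))$, hence by $C(\mathcal L^N(L_i) + |Ev_{n_k}|(L_i) + |Eu_{n_k}|(L_i) + M\|u_{n_k}-v_{n_k}\|_{L^1(L_i)} + |D\chi_{n_k}|(L_i))$. Since $|Ev_{n_k}| \overset{*}{\rightharpoonup}\mu$ with $|Ev_{n_k}|(A) \to \mu(A)$ (so in fact $|Ev_{n_k}| \to \mu$ strictly, giving convergence of masses on every set whose boundary is $\mu$-null), and $\sup_k |Eu_{n_k}|(A) < +\infty$, $\sup_k |D\chi_{n_k}|(A) < +\infty$ (the latter because $\{\chi_n\}$ admissible forces a perimeter bound — if not, $\liminf_n F(\chi_n,u_n;A) = +\infty$ and there is nothing to prove), averaging over $i$ and choosing the best layer shows that the total contribution from the chosen layer $L_{i(k)}$ is $O(1/M)$ plus $o(1)$ as $k \to \infty$. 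Putting the three pieces together:
$$
F(\chi_{n_k}, w_k; A) \le F(\chi_{n_k}, u_{n_k}; A) + \frac{C}{M}\big(\mu(A) + \sup_j|Eu_j|(A) + \sup_j|D\chi_j|(A)\big) + o(1),
$$
and taking $\limsup_{k}$ then $M \to \infty$ yields the claim, with $w_k$ obtained by the usual diagonal extraction. Note $w_k - v_{n_k} = (1-\varphi_{i(k)})(u_{n_k}-v_{n_k}) \to 0$ in $L^1$ automatically.

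The main obstacle — which is really the only point requiring care beyond bookkeeping — is making the error term $\nabla\varphi_i \odot (v_{n_k} - u_{n_k})$ genuinely negligible \emph{and} simultaneously keeping a single diagonal subsequence that works for all three sequences $\{v_{n_k}\}$, $\{\chi_{n_k}\}$, $\{w_k\}$. This is handled by the standard pigeonhole step: for fixed $M$, the average over $i=1,\dots,M$ of $F(\chi_{n_k},w_k^{(i)};A)$ is bounded above (after the estimates above) by $F(\chi_{n_k},u_{n_k};A) + \frac{C}{M}(\text{const}) + CM\|u_{n_k}-v_{n_k}\|_{L^1(A)}$, so there is an index $i=i(k,M)$ achieving at most the average; one first lets $k\to\infty$ along a subsequence realising $\liminf_n F(\chi_n,u_n;A)$ (killing the $CM\|u_{n_k}-v_{n_k}\|_{L^1}$ term since $M$ is fixed), then $M\to\infty$, then diagonalises in $(k,M)$ to produce the final $\{w_k\}$. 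One should also record that the cut-offs can be taken with $\|\nabla\varphi_i\|_\infty \le CM/\mathrm{dist}(A',\partial A)$, which is where the factor $M$ enters; this is exactly as in \cite{BFT} and requires no modification to accommodate $\chi_n$, since the perimeter term $|D\chi_n|$ is not altered by the construction (we never modify $\chi_n$, only relabel a subsequence).
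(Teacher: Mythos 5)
Your overall strategy (De Giorgi slicing with cut-offs, pigeonholing over layers, leaving $\chi_n$ untouched so that the perimeter term is a passive passenger) is precisely the adaptation of \cite[Proposition 3.7]{BFT} that the paper has in mind, but the execution has a genuine gap. First, your interpolation is oriented the wrong way round: with $\varphi_i\in C^\infty_c(A_i;[0,1])$, $\varphi_i\equiv 1$ on $A_{i-1}$, the field $w^{(i)}_k=\varphi_i v_{n_k}+(1-\varphi_i)u_{n_k}$ equals $u_{n_k}$, not $v_{n_k}$, near $\partial A$, so the required trace condition fails, and no relabeling of $A'$ repairs this; you must glue the other way, $w^{(i)}_k=\varphi_i u_{n_k}+(1-\varphi_i)v_{n_k}$, keeping $u_{n_k}$ in the inner region and $v_{n_k}$ in the outer annulus. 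This is not cosmetic: in your assembled estimate the set $A_{i-1}$, which is most of $A$, carries the bulk energy of $v_{n_k}$, and $\int_{A_{i-1}}f(\chi_{n_k},\E v_{n_k})\,dx$ cannot be bounded by $F(\chi_{n_k},u_{n_k};A)$ (it is only of order $\L^N(A)+|Ev_{n_k}|(A)$), so the displayed inequality $F(\chi_{n_k},w_k;A)\le F(\chi_{n_k},u_{n_k};A)+C/M+o(1)$ is false as written.

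Second, once the orientation is corrected, your final inequality is still missing the contribution of the outer annulus, contained in $A\setminus A'$, where $w_k=v_{n_k}$. By \eqref{growth} it is bounded by $\beta\bigl(\L^N(A\setminus A')+|Ev_{n_k}|(A\setminus A')\bigr)$, and this is exactly where the hypotheses $|Ev_n|\overset{\ast}{\rightharpoonup}\mu$ and $|Ev_n|(A)\to\mu(A)$ are needed: since $A\setminus A'$ is relatively closed in $A$, strict convergence gives $\limsup_k|Ev_{n_k}|(A\setminus A')\le\mu(A\setminus A')$. This term is not $O(1/M)$; it only becomes small through an additional limit in which $A'$ invades $A$ (choose $A'$ with $\L^N(A\setminus A')+\mu(A\setminus A')<\delta$ and let $\delta\to0^+$), followed by one further diagonalisation in $(k,M,\delta)$, and your final estimate, which retains only $\tfrac{C}{M}\bigl(\mu(A)+\sup_j|Eu_j|(A)+\sup_j|D\chi_j|(A)\bigr)+o(1)$, omits it entirely. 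The remaining ingredients (pigeonhole over layers, the cut-off error $M\|u_{n_k}-v_{n_k}\|_{L^1(A)}\to0$ for fixed $M$, the reduction to $\liminf_nF(\chi_n,u_n;A)<+\infty$ to get $\sup_k|D\chi_{n_k}|(A)<+\infty$) are fine, and with these two corrections your argument coincides with the paper's, which simply defers to \cite[Proposition 3.7]{BFT}.
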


It is clear from the proof that if the original sequences $\{u_n\}, \{v_n\}$ belong
to $W^{1,1}(A;\mathbb{R}^{N})$ then the sequence $\{w_k\}$ will also be in this space.

\begin{proposition}\label{nestedsa}
Assume that $W_0$ and $W_1$ are continuous functions satisfying \eqref{growth}. 
Let $u \in BD(\Omega)$, $\chi \in BV(\Omega;\{0,1\})$ and $S, U, V \in \mathcal O(\Omega)$ be such that $S \subset \subset V \subset U.$ Then
$$\mathcal{F}\left(\chi,u;U\right) \leq \mathcal{F}\left(\chi,u;V\right) 
+ \mathcal{F}\left(\chi,u;U\setminus \overline S\right).$$
\end{proposition}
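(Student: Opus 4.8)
The plan is to establish the nested subadditivity of $\mathcal{F}(\chi,u;\cdot)$ by a standard localisation and cut-off argument, using Proposition~\ref{slicing} to glue competitors without creating spurious energy along the interface where the two constructions are patched together. First I would fix $\eta > 0$ and choose near-optimal sequences for the two pieces: by Proposition~\ref{firstprop}~$ii)$ there are $\{\chi_n^1\} \subset BV(V;\{0,1\})$, $\{u_n^1\} \subset W^{1,1}(V;\mathbb{R}^N)$ with $u_n^1 \to u$ in $L^1(V;\mathbb{R}^N)$, $\chi_n^1 \to \chi$ in $L^1(V;\{0,1\})$ and $F(\chi_n^1,u_n^1;V) \to \mathcal{F}(\chi,u;V)$, and similarly $\{\chi_n^2\}$, $\{u_n^2\}$ on $U \setminus \overline{S}$ realising $\mathcal{F}(\chi,u;U\setminus\overline{S})$. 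Since $u \in BD(\Omega)$ we may also enforce $\sup_n |Eu_n^i|(\cdot) < +\infty$ from the energy bound in Proposition~\ref{firstprop}~$i)$.

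Next I would introduce an intermediate open set $W$ with $S \subset\subset W \subset\subset V$, so that $U = W \cup (U \setminus \overline{S})$ with overlap $(V \setminus \overline{S}) \cap W$ having positive thickness. The key step is to apply Proposition~\ref{slicing} on the annular region $V \setminus \overline{S}$ (or a slightly enlarged version), with $u_n = u_n^1$ and $v_n = u_n^2$: since $u_n^1 - u_n^2 \to 0$ in $L^1$ on the overlap (both converge to $u$), $\sup_n |Eu_n^1| < +\infty$, and after passing to a subsequence $|Eu_n^2| \overset{\ast}{\rightharpoonup} \mu$ with $|Eu_n^2|(V\setminus\overline{S}) \to \mu(V\setminus\overline{S})$ (choosing the annulus so that $\mu$ charges no boundary), the proposition produces a modified sequence $\{w_k\}$ with $w_k = u_{n_k}^2$ near the outer part of the annulus, $w_k - u_{n_k}^2 \to 0$ in $L^1$, and controlled energy. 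One then defines the glued competitor on $U$ by setting it equal to $u_{n_k}^1$ (resp.\ $\chi_{n_k}^1$) on $W$ and to $w_k$ (resp.\ the corresponding characteristic function $\chi_{n_k}^2$) on $U \setminus \overline{S}$; because $w_k$ matches $u_{n_k}^2$ near $\partial W$ from inside the annulus, the pieces agree on the overlap and the result is admissible, with no interfacial perimeter contribution. The energy estimate from Proposition~\ref{slicing}, together with additivity of $F$ over the disjoint pieces $W$ and $U \setminus \overline{W}$, yields
\begin{align*}
\mathcal{F}(\chi,u;U) &\leq \liminf_{k} F(\text{glued}; U) \leq \liminf_k \big[F(\chi_{n_k}^1,u_{n_k}^1;V) + F(\chi_{n_k}^2,w_k;U\setminus\overline{S})\big]\\
&\leq \mathcal{F}(\chi,u;V) + \mathcal{F}(\chi,u;U\setminus\overline{S}) + C\eta,
\end{align*}
and letting $\eta \to 0$ finishes the proof. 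A small extra point: the characteristic function field in the glued competitor must itself lie in $BV(U;\{0,1\})$ with no added jump on the patching interface; this is automatic once the two $\chi$-sequences are made to coincide near $\partial W$, which can be arranged either by the same slicing argument applied to the $\chi$ component (Proposition~\ref{slicing} already carries $\{\chi_n\}$ along) or by noting that $\chi$ is common to both and using the $L^1$ convergence together with the coarea-type control on $|D\chi_n|$.

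The main obstacle I anticipate is precisely the bookkeeping needed to ensure that the surface term $|D\chi_{n_k}|$ does not blow up across the gluing interface. Unlike the $BD$ part, where Proposition~\ref{slicing} is tailored to kill the boundary layer, the perimeter of the glued $\chi$ could in principle pick up a term of order $\mathcal{H}^{N-1}(\partial W)$ if the two $\chi$-sequences disagree there. The resolution is that Proposition~\ref{slicing} is stated for the full functional $F$ (bulk plus perimeter) and for sequences $\{\chi_n\}$ as well, so the cut-off is performed simultaneously on both fields and the energy estimate already accounts for $|D\chi|$; one must only be careful to select the radius of the annulus in the slicing construction among a one-parameter family so that both $|Eu_n^2|$ and $|D\chi_n^2|$ (and the limiting measures) place no mass on the chosen level set — a standard application of the fact that an at-most-countable set of bad radii can be avoided. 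Apart from this, the argument is the routine nested-subadditivity scheme and involves no genuinely new ideas beyond those already packaged in Propositions~\ref{firstprop} and~\ref{slicing}.
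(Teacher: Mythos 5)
Your general scheme (near-optimal sequences on $V$ and $U\setminus\overline S$, a cut-off/gluing step packaged in Proposition~\ref{slicing}, control of the transition layer) is the right family of argument, but the way you invoke Proposition~\ref{slicing} does not work. Applied on the annulus $A=V\setminus\overline S$ with $u_n=u_n^1$, $v_n=u_n^2$, the lemma produces $w_k$ equal to $u^2_{n_k}$ near the \emph{whole} of $\partial A$, i.e.\ near $\partial V$ \emph{and} near $\partial S$; nothing in its statement makes $w_k$ coincide with $u^1_{n_k}$ near $\partial W$, as your matching claim requires. Hence the glued field is not well defined: on the overlap $W\setminus\overline S$ the prescriptions $u^1_{n_k}$ and $w_k$ disagree, and near $\partial S$ the inner piece $u^1_{n_k}$ does not match $w_k=u^2_{n_k}$ (which, moreover, does not extend into $S$ at all). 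A single application of Proposition~\ref{slicing} on an annulus cannot yield a transition with different boundary data on the two components of $\partial A$. The paper's proof avoids precisely this: it applies the slicing lemma \emph{twice}, to $\{v_n\}$ and the fixed target $u$ in $V_0$, and to $\{w_n\}$ and $u$ in $U\setminus\overline{V_0}$, so that both modified sequences equal the common function $u$ near the single gluing interface $\partial V_0$ (chosen with $|Eu|(\partial V_0)=|D\chi|(\partial V_0)=0$); the cost is the $O(\delta)$ bulk term on the thin layer where the competitor equals $u$.

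The second gap is the treatment of the characteristic functions, which is the genuinely new difficulty here compared with \cite{BFT}. Proposition~\ref{slicing} does not modify the $\chi$-sequence (it only extracts a subsequence), it involves a single $\chi$-sequence while you must reconcile two different ones, $\chi^1_n$ and $\chi^2_n$, and convex-combination cut-offs are unavailable because they destroy the constraint $\chi\in\{0,1\}$. So your main resolution --- that the proposition performs the cut-off ``simultaneously on both fields'' and ``already accounts for $|D\chi|$'' --- is a misreading. Likewise, choosing the gluing radius so that $|Eu^2_n|$, $|D\chi^2_n|$ and the limit measures do not charge the interface does not control the perimeter created by switching abruptly from $\chi^1_{n_k}$ to $\chi^2_{n_k}$ across it: what is needed is the Fubini/coarea selection of a level surface $\partial V_{\rho_0}$ of the distance function on which $\int_{\partial V_{\rho_0}}|\chi^1_n-\chi^2_n|\,d\mathcal H^{N-1}\to 0$, as in \eqref{rho}, which makes the created interface term vanish as $n\to+\infty$. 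You do mention the ``$L^1$ convergence together with coarea-type control'' as an alternative, which is indeed the correct ingredient, but it is left entirely undeveloped; without it (and without fixing the displacement gluing above) the inequality $\mathcal F(\chi,u;U)\le\mathcal F(\chi,u;V)+\mathcal F(\chi,u;U\setminus\overline S)$ is not established.
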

\begin{proof}
By Proposition \ref{firstprop}, $ii)$, let 
$\{v_n\} \subset W^{1,1}(V;\mathbb{R}^{N})$, 
$\{w_n\} \subset W^{1,1}(U\setminus \overline S;\mathbb{R}^{N})$, 
$\{\chi_n\} \subset BV(V;\{0,1\})$ and
$\{\theta_n\} \subset BV(U\setminus \overline S;\{0,1\})$ be such that
$v_n \to u$ in $L^{1}(V;\mathbb{R}^{N})$,
$w_n \to u$ in $L^{1}(U\setminus \overline S;\mathbb{R}^{N})$,
$\chi_{n}\to\chi$ in $L^1(V;\{0,1\})$
$\theta_{n}\to\chi$ in 
$L^1(U\setminus \overline S;\{0,1\})$ and
\begin{align}\label{VUS1}
\mathcal{F}\left(\chi,u;V\right) & = \lim_{n\rightarrow\infty}F(\chi_n,v_n;V) \\
\mathcal{F}\left(\chi,u;U\setminus \overline S\right) & = \lim_{n\rightarrow\infty}F(\theta_n,w_n;U\setminus \overline S). \label{VUS2}
\end{align}
Let $V_0 \in \mathcal O_{\infty}(\O)$ satisfy 
$S \subset \subset V_0 \subset \subset V$ and $|E u| (\partial V_0) = 0$, 
$|D\chi|(\partial V_0) = 0$. Applying Proposition \ref{slicing} to $\{v_n\}$ and $u$ in $V_0$, we obtain a subsequence $\{\overline \chi_n\}$ of $\{\chi_n\}$ and a sequence $\{\overline v_n\} \subset W^{1,1}(V_0;\mathbb{R}^{N})$
such that $\overline v_n = u$ near $\partial V_0$, $\overline v_n \to u$ in 
$L^{1}(V_0;\mathbb{R}^{N})$ and
\begin{equation}\label{sl1}
\limsup_{n\to +\infty}F(\overline \chi_n,\overline v_n;V_0) \leq 
\liminf_{n\to +\infty}F(\chi_n,v_n;V_0).
\end{equation}
A further application of Proposition \ref{slicing}, this time to $\{w_n\}$ and $u$ in $U \setminus \overline V_0$, yields a subsequence $\{\overline \theta_n\}$ of $\{\theta_n\}$ and a sequence 
$\{\overline w_n\} \subset W^{1,1}(U \setminus \overline V_0;\mathbb{R}^{N})$
such that $\overline w_n = u$ near $\partial V_0$, $\overline w_n \to u$ in 
$L^{1}(U \setminus \overline V_0;\mathbb{R}^d)$ and
\begin{equation}\label{sl2}
\limsup_{n\to +\infty}F(\overline \theta_n,\overline w_n;U \setminus \overline V_0) \leq \liminf_{n\to +\infty}F(\theta_n,w_n;U \setminus \overline V_0).
\end{equation}
Define 
$$z_n : = \begin{cases}
\overline v_n,& {\rm in } \; V_0\\
\overline w_n,& {\rm in } \; U \setminus V_0,
\end{cases}$$
notice that, by the properties of $\{\overline v_n\}$ and $\{\overline w_n\}$,
$\{z_n\} \subset W^{1,1}(U;\mathbb{R}^{N})$ and 
$z_n \to u$ in $L^{1}(U;\mathbb{R}^{N})$.

We must now build a transition sequence $\{\eta_n\}$ between $\{\overline\chi_n\}$ and $\{\overline\theta_n\}$, in such a way that an upper bound 
for the total variation of $\eta_n$ is obtained.
In order to connect these functions without adding more interfaces, we argue 
as in \cite{BMMO} (see also \cite{BZ}). 
For $\delta > 0$ consider
$$
V_\delta:=\{x \in V: {\rm dist}(x,V_0) < \delta\},
$$
where $\delta$ is small enough so that $\overline w_n = u$ in 
$V_{\delta} \setminus \overline V_0$ and
\begin{equation}\label{Odelta}
\int_{V_{\delta} \setminus \overline V_0}C(1 + |u(x)|) \, dx = O(\delta).
\end{equation}
Given $x \in V$, let $d(x) := {\rm dist}(x;V_0)$. Since the distance function 
to a fixed set is Lipschitz continuous, 
applying the change of variables formula (see Theorem 2, Section 3.4.3, 
in \cite{EG}) yields
$$
\int_{V_\delta \setminus \overline{V_0}}|\overline\chi_n(x)-\overline\theta_n(x)| 
|{\rm det}\nabla d(x)| \, dx =
\int_0^\delta \left[ \int_{d^{-1}(y)} |\overline\chi_n(x)-\overline\theta_n(x)| \,
d {\mathcal H}^{N-1}(x) \right] dy$$
and, as $|{\rm det}\nabla d(x)|$ 
is bounded and $\overline\chi_n - \overline\theta_n \to 0$  in 
$L^1(V \cap (U \setminus \overline S);\{0,1\})$, it follows that, for almost every $\rho \in [0; \delta]$, we have
\begin{equation}\label{rho}
\lim_{n\to +\infty}\int_{d^{-1}(\rho)}|\overline\chi_n(x) - \overline\theta_n(x)| \, 
d{\mathcal H}^{N-1}(x) = 
\lim_{n\to +\infty}\int_{\partial V_\rho}|\overline\chi_n(x)-\overline\theta_n(x)| \,
d {\mathcal H}^{N-1}(x) = 0.
\end{equation}
Fix $\rho_0\in [0; \delta]$ such that $|D\chi|(\partial V_{\rho_0})= 0$ and \eqref{rho} holds. We
observe that $V_{\rho_0}$ is a set with locally Lipschitz boundary since 
it is a level set of a Lipschitz function (see, for example, \cite{EG}). Hence, for every $n$, we can consider $\overline\chi_n, \overline\theta_n$ on $\partial V_{\rho_0}$ in the sense of traces and define
\begin{equation*}
\eta_n :=\begin{cases}
\overline\chi_n, &\hbox{ in } V_{\rho_0}\\
\overline\theta_n, &\hbox{ in } U\setminus V_{\rho_0}.
\end{cases}
\end{equation*}
Then $\{\eta_n\} \subset BV(U;\{0,1\})$, 
$\eta_{n}\to\chi$ in $L^1(U;\{0,1\})$ and so $\{\eta_n\}$
and $\{z_n\}$ are admissible for $\mathcal{F}\left(\chi,u;U\right)$. Therefore, by
\eqref{rho}, \eqref{growth}, \eqref{sl1}, \eqref{sl2}, \eqref{Odelta}, \eqref{VUS1} and \eqref{VUS2},
\begin{align*}
\mathcal{F}\left(\chi,u;U\right) &\leq \liminf_{n\to +\infty}F(\eta_n,z_n;U) \\
& = \liminf_{n\to +\infty}\left[F(\overline\chi_n,\overline v_n; V_0)  +
\int_{V_{\rho_0}\setminus \overline V_0}\overline\chi_n(x)W_1(\E u(x)) + 
(1 - \overline \chi_n(x))W_0(\E u(x)) \, dx \right.\\ 
& \left. \hspace{2cm} + |D\overline \chi_n|(V_{\rho_0}\setminus V_0) 
 + F(\overline\theta_n,\overline w_n; U \setminus V_{\rho_0}) + \int_{\partial V_{\rho_0} }|\overline \chi_n(x) - \overline \theta_n(x)| 
\, d  {\mathcal H}^{N-1}(x)\right] \\
& \leq \limsup_{n\to +\infty}F(\overline\chi_n,\overline v_n; V_0) + 
\limsup_{n\to +\infty}F(\overline\theta_n,\overline w_n; U \setminus \overline V_{0})
+ \int_{V_{\rho_0}\setminus \overline V_0}C \big(1 + |\E u(x)|\big) \, dx \\
& \hspace{7.9cm} + \limsup_{n\to +\infty} |D\chi_n|(V_{\rho_0}\setminus V_0) \\
& \leq \liminf_{n\to +\infty}F(\chi_n,v_n; V_0) +
\liminf_{n\to +\infty}F(\theta_n,w_n; U \setminus \overline V_{0}) + O(\delta) +
\limsup_{n\to +\infty} |D\chi_n|(V_{\rho_0}\setminus V_0) \\
& \leq \limsup_{n\to +\infty}F(\chi_n,v_n; V) +
\limsup_{n\to +\infty}F(\theta_n,w_n; U \setminus \overline S) + O(\delta) \\
& = \mathcal{F}\left(\chi,u;V\right) + 
\mathcal{F}\left(\chi,u;U\setminus \overline S\right) + O(\delta)
\end{align*}
so the result follows by letting $\delta \to 0^+$.
\end{proof}

\begin{proposition}\label{traceRm}
Let $W_0$ and $W_1$ be continuous functions satisfying \eqref{growth}. 
For every $u \in BD(\Omega)$, $\chi \in BV(\Omega;\{0,1\})$, $\mathcal{F}\left(\chi,u;\cdot\right)$ is the restriction to $\mathcal O(\O)$ of a Radon measure.
\end{proposition}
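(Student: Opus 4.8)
The plan is to apply the De Giorgi--Letta criterion, which characterises when an increasing set function extends to a Borel (hence Radon) measure. For a fixed pair $(\chi, u) \in BV(\Omega;\{0,1\}) \times BD(\Omega)$, the set function $A \mapsto \mathcal{F}(\chi, u; A)$, defined on $\mathcal O(\Omega)$, must be shown to satisfy: (i) it is nondecreasing with respect to set inclusion; (ii) it is inner regular, i.e. $\mathcal{F}(\chi, u; A) = \sup\{\mathcal{F}(\chi, u; A') : A' \in \mathcal O(\Omega), A' \subset\subset A\}$; (iii) it is subadditive, $\mathcal{F}(\chi,u;A\cup B) \le \mathcal{F}(\chi,u;A) + \mathcal{F}(\chi,u;B)$; and (iv) it is superadditive on disjoint open sets, $\mathcal{F}(\chi,u;A \cup B) \ge \mathcal{F}(\chi,u;A) + \mathcal{F}(\chi,u;B)$ whenever $A \cap B = \emptyset$. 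Together with the upper bound $\mathcal F(\chi,u;A) \le C(\mathcal L^N(A) + |Eu|(A) + |D\chi|(A))$ from Proposition~\ref{firstprop}, which shows $\mathcal F(\chi,u;\cdot)$ is dominated by a finite Radon measure, these conditions yield that the extension is a Radon measure.

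First I would dispatch the easy properties. Monotonicity (i) is immediate from the definition of $\mathcal{F}$ as an infimum: a sequence admissible for the larger set, restricted, is admissible for the smaller set, and $F$ is itself monotone in the domain because its integrand is nonnegative and $|D\chi_n|$ is a positive measure. Superadditivity on disjoint open sets (iv) is also straightforward: given $A, B \in \mathcal O(\Omega)$ disjoint and a near-optimal sequence $(\chi_n, u_n)$ for $\mathcal F(\chi,u;A\cup B)$, its restrictions to $A$ and to $B$ are admissible for $\mathcal F(\chi,u;A)$ and $\mathcal F(\chi,u;B)$ respectively, and since $F(\chi_n, u_n; A\cup B) = F(\chi_n,u_n;A) + F(\chi_n,u_n;B)$ (the integral and the total variation are additive over disjoint open sets, using $|D\chi_n|(A\cup B) = |D\chi_n|(A) + |D\chi_n|(B)$ as $A\cap B=\emptyset$), passing to the liminf and using superadditivity of liminf gives the claim.

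The substantive points are subadditivity (iii) and inner regularity (ii). For inner regularity, the upper bound in Proposition~\ref{firstprop}~$i)$ gives $\mathcal F(\chi,u;A \setminus \overline{A'}) \le C(\mathcal L^N(A\setminus \overline{A'}) + |Eu|(A\setminus\overline{A'}) + |D\chi|(A\setminus \overline{A'}))$, and the right-hand side tends to $0$ as $A' \nearrow A$ through a suitable sequence of open subsets $A' \subset\subset A$ with $|Eu|(\partial A') = |D\chi|(\partial A') = 0$; combining this with the nested subadditivity of Proposition~\ref{nestedsa}, applied with $S = A'$, $V$ a slightly larger open set with $A' \subset\subset V \subset\subset A$, and $U = A$, gives $\mathcal F(\chi,u;A) \le \mathcal F(\chi,u;V) + O(|Eu|(A\setminus\overline{A'}) + \cdots)$, so the sup over $A' \subset\subset A$ recovers $\mathcal F(\chi,u;A)$. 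I expect this to be the main obstacle in the sense of being where the previously established auxiliary results (especially Proposition~\ref{nestedsa}, which was itself built on the slicing Proposition~\ref{slicing}) really do the work. Finally, for subadditivity, given $A, B \in \mathcal O(\Omega)$, I would pick an open set $A' \subset\subset A$ and write $A \cup B \subset \subset$-approximated so that Proposition~\ref{nestedsa} applies with the roles $S = A'$, $V = B \cup A'$ (or a slight enlargement), $U = A \cup B$: this yields
$$
\mathcal F(\chi,u;A\cup B) \le \mathcal F(\chi,u; A' \cup B) + \mathcal F(\chi,u; (A\cup B)\setminus \overline{A'}) \le \mathcal F(\chi,u;A'\cup B) + \mathcal F(\chi,u;B),
$$
wait --- more carefully, one takes $\mathcal F(\chi,u;(A\cup B)\setminus\overline{A'}) \le \mathcal F(\chi,u; A\setminus\overline{A'}) + \mathcal F(\chi,u;B)$ is not yet available, so instead I would use that $(A\cup B)\setminus \overline{A'} \subset (A\setminus\overline{A'}) \cup B$ together with monotonicity and a further splitting; the clean route is to first prove subadditivity and then inner regularity, or to interleave them: apply Proposition~\ref{nestedsa} with $S=A'\subset\subset A$, $V = A$, $U = A\cup B$ giving $\mathcal F(\chi,u;A\cup B)\le \mathcal F(\chi,u;A) + \mathcal F(\chi,u;(A\cup B)\setminus\overline{A'})$, then let $A'\nearrow A$ and use the upper bound to control $\mathcal F(\chi,u;(A\cup B)\setminus\overline{A'})$ by $\mathcal F(\chi,u;B) + C(|Eu| + |D\chi| + \mathcal L^N)((A\setminus\overline{A'}))$, which $\to \mathcal F(\chi,u;B)$; this gives $\mathcal F(\chi,u;A\cup B)\le \mathcal F(\chi,u;A)+\mathcal F(\chi,u;B)$. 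Thus all four De Giorgi--Letta conditions hold and the proof concludes by invoking that criterion.
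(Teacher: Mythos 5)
Your overall strategy (De Giorgi--Letta: monotonicity, superadditivity on disjoint open sets, inner regularity, subadditivity, plus domination by the finite Radon measure $\lambda:=\mathcal L^N+|Eu|+|D\chi|$ via Proposition~\ref{firstprop}~$i)$) is a legitimate route, and your treatment of monotonicity, superadditivity and inner regularity is fine. The gap is exactly at the point you half-noticed: in the subadditivity step, after applying Proposition~\ref{nestedsa} with $S=A'\subset\subset A$, $V=A$, $U=A\cup B$ to get $\mathcal F(\chi,u;A\cup B)\le \mathcal F(\chi,u;A)+\mathcal F(\chi,u;(A\cup B)\setminus\overline{A'})$, you claim that ``the upper bound'' controls $\mathcal F(\chi,u;(A\cup B)\setminus\overline{A'})$ by $\mathcal F(\chi,u;B)+C\lambda(A\setminus\overline{A'})$. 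The upper bound alone cannot give this: it only yields $\mathcal F(\chi,u;(A\cup B)\setminus\overline{A'})\le C\lambda\big((A\cup B)\setminus\overline{A'}\big)$, whose $B$-contribution is $C\lambda(B)$, in general much larger than $\mathcal F(\chi,u;B)$ (the constants in the two growth inequalities differ). Splitting the set $(A\cup B)\setminus\overline{A'}\subset B\cup(A\setminus\overline{A'})$ into a ``$B$ part'' priced at $\mathcal F(\chi,u;B)$ and a small remainder priced by $C\lambda$ is itself a subadditivity-type statement, so as written the argument is circular. Note also that you cannot simply re-apply Proposition~\ref{nestedsa} with $V=B$ inside $(A\cup B)\setminus\overline{A'}$, because $B$ need not be contained in that set. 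The repair is available but must be said: by monotonicity pass to $\mathcal F(\chi,u;B\cup(A\setminus\overline{A'}))$ and apply Proposition~\ref{nestedsa} a second time with $U=B\cup(A\setminus\overline{A'})$, $V=B$ and an open $S\subset\subset B$ chosen so that $\lambda(B\setminus S)<\varepsilon$; then $U\setminus\overline S\subset (B\setminus S)\cup(A\setminus\overline{A'})$ has small $\lambda$-measure, the crude bound $\mathcal F\le C\lambda$ applies only there, and you obtain $\mathcal F(\chi,u;(A\cup B)\setminus\overline{A'})\le \mathcal F(\chi,u;B)+C\big(\lambda(B\setminus S)+\lambda(A\setminus\overline{A'})\big)$, after which letting $A'\nearrow A$ and $\varepsilon\to 0^+$ gives full subadditivity. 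Without this (or an equivalent gluing/fundamental-estimate argument), the De Giorgi--Letta hypotheses are not verified.

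For comparison, the paper does not go through De Giorgi--Letta at all: it takes an optimal sequence for $\mathcal F(\chi,u;\Omega)$, considers the energy measures $\mu_n=f(\chi_n,\E u_n)\mathcal L^N\lfloor\Omega+|D\chi_n|$, extracts a weak-$*$ limit $\mu$ on $\overline\Omega$, and proves the two inequalities $\mu(V)\le\mathcal F(\chi,u;V)\le\mu(\overline V)$-type comparisons using exactly Proposition~\ref{nestedsa}, the growth bound and inner approximation, so that $\mathcal F(\chi,u;\cdot)=\mu$ on $\mathcal O(\Omega)$. That route sidesteps proving subadditivity of $\mathcal F$ directly, which is precisely the step your proposal leaves incomplete.
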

\begin{proof}
By Proposition \ref{firstprop}, $ii)$, let 
$\{u_n\} \subset W^{1,1}(\O;\mathbb{R}^{N})$, 
$\{\chi_n\} \subset BV(\O;\{0,1\})$, be such that
$u_n \to u$ in $L^{1}(\O;\mathbb{R}^{N})$,
$\chi_{n}\to\chi$ in $L^1(\O;\{0,1\})$ and
$$\mathcal{F}\left(\chi,u;\O\right)  = \lim_{n\rightarrow\infty}F(\chi_n,u_n;\O).$$
Let $\mu_n = f(\chi_n(\cdot), \E u_n (\cdot)) {\cal L}^N\lfloor{\Omega}
+ |D \chi_n|$ and extend this sequence of measures outside of 
$\Omega$ by setting, for any Borel set $E \subset \mathbb R^N$,
$$\lambda_n(E) = \mu_n(E \cap \Omega).$$ 
Passing, if necessary, to a subsequence, we can assume that there exists a 
non-negative Radon measure $\mu$ (depending on $\chi$ and $u$) on 
$\overline{\Omega}$  such that $\lambda_n \wsto \mu$ in the sense of measures 
in $\overline {\Omega}$.
Let $\varphi_k \in C_0(\overline{\Omega})$ be an increasing sequence of functions 
such that $0 \leq \varphi_k \leq 1$ and $\varphi_k(x) \to 1$ a.e. in 
$\overline{\Omega}$. Then, by Fatou's Lemma and by the choice of $\{u_n\}$, $\{\chi_n\}$, we have
\begin{align*}
\mu(\overline{\Omega}) & = \int_{\overline{\Omega}}
\liminf_{k \to + \infty} \varphi_k(x) \, d\mu
 \leq \liminf_{k \to +\infty}\int_{\overline{\Omega}}\varphi_k(x) \, d\mu \\
& = \liminf_{k \to +\infty}\lim_{n \to + \infty}\left ( \int_{\Omega}
\varphi_k(x) f(\chi_n(x),\E u_n (x)) \, dx + 
\int_{\Omega}\varphi_k(x) \, d |D\chi_n| \right )\\
& \leq \lim_{n \to + \infty} \left (\int_{\Omega}
f(\chi_n(x),\E u_n (x)) \, dx + |D\chi_n|(\Omega) \right )
=  {\cal F}(\chi,u;\Omega),
\end{align*}
so that
\begin{equation}\label{mu1}
 \mu(\overline{\Omega})\leq {\cal F}(\chi, u; \Omega).
\end{equation}
On the other hand, by the upper semicontinuity of weak $\ast$ convergence of 
measures on compact sets, 
for every open set $V \subset \Omega$, it follows that
\begin{equation}\label{mu2}
{\cal F}(\chi, u; V) \leq \liminf_{n \to +\infty} F(\chi_n, u_n; V) 
= \liminf_{n \to +\infty} \mu_n(V) 
\leq \limsup_{n \to +\infty} \mu_n(\overline{V})
\leq \mu(\overline{V}).
\end{equation}
Now let  $V \in \mathcal O(\O)$ and $\e>0$ be fixed and consider an open set  
$S \subset \subset V$ such that $\mu(V \setminus S) < \e$. Then
\begin{equation}\label{mu3}
\mu(V) \leq \mu(S) + \e = \mu(\overline \O) - \mu(\overline \O \setminus S) + \e,
\end{equation}
and so, by \eqref{mu3}, \eqref{mu1}, \eqref{mu2} and Proposition \ref{nestedsa} 
we have
\begin{equation*}
\mu(V)\leq \mu({\overline \Omega})-  \mu( {\overline\Omega} \setminus S) 
+ \varepsilon 
\leq {\cal F}(\chi, u; \Omega)- 
{\cal F}(\chi, u; \Omega \setminus {\overline S}) + \varepsilon 
\leq {\cal F}(\chi, u; V)+\varepsilon.
\end{equation*}
Letting $\e \to 0^+$, we obtain 
$$
\mu(V)\leq {\cal F}(\chi, u; V),
$$
whenever $V$ is an open set such that $V \subset \subset \Omega$.
For a general open subset $V \subset \Omega$ we have
$$
\mu(V) = \sup \{\mu(O) : O \subset \subset V \}
\leq \sup \{{\cal F}(\chi, u; O) : O \subset \subset V \}
\leq {\cal F}(\chi, u; V).
$$ 

It remains to show that ${\cal F}(\chi, u; U) \leq \mu(U)$, 
$\forall \, U \in \mathcal O(\O)$. Fix $\e > 0$ and choose $V, S \in \mathcal O(\O)$
such that $S \subset \subset V \subset \subset U$ and 
$\mathcal L^N(U \setminus \overline S) + |Eu|(U \setminus \overline S) 
+ |D\chi|(U \setminus \overline S) < \e$. By Proposition \ref{firstprop} $i)$,
\eqref{mu2} and the nested subadditivity result, it follows that
\begin{align*}
{\cal F}(\chi, u; U) &\leq {\cal F}(\chi, u; V) + 
{\cal F}(\chi, u; U \setminus {\overline S}) \\
&\leq \mu(\overline V) +
C\left(\mathcal L^N(U \setminus \overline S) + |Eu|(U \setminus \overline S) 
+ |D\chi|(U \setminus \overline S)\right) \leq \mu(U) + C\e,
\end{align*}
so it suffices to let $\e \to 0^+$ to conclude the proof.
\end{proof}

Combining the arguments given in the proofs of Propositions \ref{slicing} and
\ref{nestedsa} it is possible to obtain the following refined version of Proposition
\ref{slicing}.

\begin{proposition}\label{newslicing}
Let $A \in \mathcal O (\Omega)$ and assume that $W_0, W_1$ satisfy the growth condition \eqref{growth}. Let
$\{u_n\}, \{v_n\} \subset BD(A;\mathbb R^N)$ and $\{\chi_n\}, \{\theta_n\} \subset BV(A;\{0,1\})$ be sequences
satisfying $u_{n} - v_n \to 0$ in $L^{1}(A;\mathbb{R}^{N})$,
$\chi_{n} - \theta_n \to 0$ in $L^1(A;\{0,1\})$, 
$\sup_n |E^su_n|(A) < + \infty$, $|Ev_n| \overset{\ast}{\rightharpoonup} \mu$,
$|Ev_n| \to \mu(A)$, $\sup_n |D\chi_n|(A) < + \infty$ and 
$\sup_n |D\theta_n|(A) < + \infty$. Then there exist subsequences $\{v_{n_k}\}$ of $\{v_n\}$, 
$\{\theta_{n_k}\}$ of $\{\theta_n\}$ and there exist sequences 
$\{w_k\} \subset BD(A)$, $\{\eta_k\} \subset BV(A;\{0,1\})$
such that $w_k = v_{n_k}$ near $\partial A$, 
$\eta_k = \theta_{n_k}$ near $\partial A$,
$w_k - v_{n_k} \to 0$ in $L^{1}(A;\mathbb{R}^{N})$,
$\eta_{k} - \theta_{n_k} \to 0$ in $L^1(A;\{0,1\})$ and
$$\limsup_{k\to +\infty}F(\eta_{k},w_k;A) \leq 
\liminf_{n\to +\infty}F(\chi_n,u_n;A).$$
\end{proposition}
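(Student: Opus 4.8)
The plan is to merge the constructions of Proposition~\ref{slicing} and Proposition~\ref{nestedsa} into a single argument. Recall that Proposition~\ref{slicing} itself is obtained by the De Giorgi slicing method: one fixes a large intermediate region, subdivides a boundary collar into $M$ annular layers, and chooses the layer on which the $L^1$-discrepancy between $\{u_n\}$ and $\{v_n\}$ is smallest, gluing $v_n$ inside to $u_n$ (hence to $v_n$ itself near $\partial A$) across that layer while controlling the extra energy created in the collar by the growth condition \eqref{growth}. The key point is that the number of layers can be taken of order $M$, so the average extra energy per layer is $O(1/M)$, which is sent to zero after the $\liminf$ in $n$. Here I want to do \emph{two} such gluings simultaneously, one for the displacement fields $\{u_n\},\{v_n\}$ and one for the characteristic functions $\{\chi_n\},\{\theta_n\}$, on the \emph{same} slicing layer.

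First I would fix $A' \subset\subset A$ open with Lipschitz boundary and $|Ev_n|(\partial A')$, $|D\chi_n|(\partial A')$, $|D\theta_n|(\partial A')$ negligible (using that these have uniformly bounded mass and the usual measure-theoretic argument over a one-parameter family of sets). Then, given $M \in \Nb$, I would consider $M$ nested collars $A'_0 \subset\subset A'_1 \subset\subset \cdots \subset\subset A'_M$ interpolating between $A'$ and $A$, with cutoff functions $\varphi_i$ supported between consecutive collars. For the displacement, set $w_n^i := \varphi_i v_n + (1-\varphi_i)u_n$; for the characteristic function I cannot simply convex-combine (the result must be $\{0,1\}$-valued), so instead, exactly as in the proof of Proposition~\ref{nestedsa}, I would use the coarea/level-set trick: pick a level $\rho_0$ inside the $i$-th collar on which the traces of $\overline\chi_n$ and $\overline\theta_n$ agree asymptotically in $L^1(\HH^{N-1})$, and define $\eta_n^i$ to equal $\chi_n$ inside the level set and $\theta_n$ outside. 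Crucially, because the collar is thin and $\sup_n|D\chi_n|(A), \sup_n|D\theta_n|(A) < +\infty$, the perimeter contribution of the gluing interface is controlled by $\int_{\partial V_{\rho_0}}|\overline\chi_n - \overline\theta_n|\,d\HH^{N-1}$, which vanishes in the limit by the choice of $\rho_0$. The extra bulk energy produced on the $i$-th collar is bounded, via \eqref{growth}, by $C\big(\L^N(A\setminus A') + |Eu_n|(\text{collar}_i) + |Ev_n|(\text{collar}_i) + |D\chi_n|(\text{collar}_i) + |D\theta_n|(\text{collar}_i)\big)$; here one uses $\sup_n |E^s u_n|(A) < +\infty$ together with $u_n - v_n \to 0$ in $L^1$ and $|Ev_n| \to \mu(A)$ to conclude that $\E u_n$ is equi-integrable on $A'$, so this quantity is small.

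Then I would average over $i = 1, \dots, M$: by pigeonhole there is an index $i = i(n)$ for which the total extra energy on collar$_{i(n)}$ is at most $\frac{1}{M}$ times the full (bounded) energy on the collar. Choosing $w_n := w_n^{i(n)}$ and $\eta_n := \eta_n^{i(n)}$, taking $n \to +\infty$ first (killing the interface term and the $L^1$-discrepancy term) and then $M \to +\infty$, and finally extracting a diagonal subsequence in $M$, yields sequences $\{w_k\}$, $\{\eta_k\}$ with $w_k = v_{n_k}$ and $\eta_k = \theta_{n_k}$ near $\partial A$, with $w_k - v_{n_k} \to 0$ in $L^1$, $\eta_k - \theta_{n_k} \to 0$ in $L^1$, and $\limsup_k F(\eta_k, w_k; A) \leq \liminf_n F(\chi_n, u_n; A)$. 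I expect the main obstacle to be the simultaneous bookkeeping: one must ensure that the \emph{same} layer works for the displacement gluing (where $|E w_n^i|$ picks up a term $\sim \|D\varphi_i\|_\infty \|u_n - v_n\|_{L^1}$ which is $O(M)\cdot o(1)$, hence harmless after $n \to +\infty$) and for the characteristic-function gluing (where one needs a \emph{good level} $\rho_0$ within that layer), while keeping all error terms — perimeter of the new interface, extra bulk energy, and the $L^1$-convergence of both $w_k$ and $\eta_k$ — under control uniformly; this is precisely where the hypotheses $\sup_n|E^s u_n|(A) < +\infty$ and the uniform bounds on $|D\chi_n|, |D\theta_n|$ enter, and verifying that they suffice is the delicate part. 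The hypothesis $|Ev_n| \wsto \mu$ with $|Ev_n| \to \mu(A)$ guarantees no mass of $|Ev_n|$ escapes to $\partial A$ and lets us choose the collars so that $\mu(\overline{\text{collar}_i})$ can be made small, which is what ultimately forces the extra energy to vanish.
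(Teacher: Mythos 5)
Your plan is the same one the paper has in mind (its ``proof'' is precisely the remark that one combines Propositions \ref{slicing} and \ref{nestedsa}): cut-off interpolation $w_n^i=\varphi_i v_n+(1-\varphi_i)u_n$ for the displacements, with the error $\nabla\varphi_i\odot(u_n-v_n)$ killed by $u_n-v_n\to0$ in $L^1$, a pigeonhole over $M$ layers for the $|Eu_n|$-contribution of the transition region, a good level $\rho_0$ chosen by the coarea argument (from $\chi_n-\theta_n\to0$ in $L^1$) to glue $\chi_n$ to $\theta_n$ without an extra interface, and tightness of $|Ev_n|$ up to $\partial A$ coming from $|Ev_n|\wsto\mu$, $|Ev_n|(A)\to\mu(A)$. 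One inaccuracy in your bookkeeping: the claim that $\E u_n$ is equi-integrable on $A'$ neither follows from the hypotheses nor is needed. What the argument uses is $\sup_n|Eu_n|(A)<+\infty$, which you obtain by assuming (as you may, since otherwise there is nothing to prove) that $\liminf_n F(\chi_n,u_n;A)<+\infty$, so that the coercivity in \eqref{growth} bounds $\int_A|\E u_n|\,dx$, and then adding $\sup_n|E^su_n|(A)<+\infty$; the pigeonhole over the $M$ layers then yields the $O(1/M)$ estimate for the layer term without any equi-integrability.

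The genuine gap is the outer-collar perimeter term. After the gluing, $\eta_k=\theta_{n_k}$ on the \emph{whole} region outside the chosen level set, so $F(\eta_k,w_k;A)$ contains $|D\theta_{n_k}|(A\setminus\overline{V_{\rho_0}})$. This term does not appear on the right-hand side, is not reduced by averaging over layers (it lives on the whole collar, not on a single layer), and the hypotheses only give $\sup_n|D\theta_n|(A)<+\infty$: unlike for $|Ev_n|$, there is no assumption preventing $|D\theta_n|$ from concentrating at $\partial A$, yet your estimate silently treats this contribution as small. Indeed, under the literal hypotheses the step (and the statement itself) fails without such a non-concentration property: take $u_n=v_n=0$, $\chi_n\equiv0$, $W_0(\xi)=|\xi|$, and $\theta_n$ the characteristic function of a slab of width $1/n$ adjacent to one face of a cube $A$; then $\liminf_n F(\chi_n,u_n;A)=0$, while any $\eta_k\in BV(A;\{0,1\})$ agreeing with $\theta_{n_k}$ on a neighbourhood of $\partial A$ and close to $\chi_{n_k}$ in $L^1$ satisfies $|D\eta_k|(A)\geq1$, so the asserted inequality cannot hold. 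In the paper's actual uses of the proposition (Proposition \ref{Ktilde} and Lemma \ref{BFT314}) the sequence $\theta_n$ is a fixed function, so $|D\theta_n|(A\setminus\overline{A'})=|D\theta|(A\setminus\overline{A'})\to0$ as $A'\nearrow A$ and your scheme closes; but to be complete your proof must either add and use an equi-tightness assumption of $|D\theta_n|$ near $\partial A$ (automatic in those applications) or restrict to that setting, and should drop the equi-integrability claim in favour of the bounded-energy reduction described above.
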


As in Proposition \ref{slicing}, the new sequence $\{w_k\}$ has the same regularity as the original sequences $\{u_n\}, \{v_n\}$ as it is obtained through a convex combination of these ones using smooth cut-off functions.

\smallskip

The following proposition, whose proof is standard (cf. for instance \cite[Lemma 3.1]{RZ} or \cite[Proposition 2.14]{CZasy}),  allows us to assume without loss of generality that $f$ is symmetric quasiconvex. 

\begin{proposition}\label{Frel=Frel**}
Let $W_0$ and $W_1$ be continuous functions satisfying \eqref{growth} and consider the functional 
$F:BV(\Omega;\{0,1\})\times BD(\Omega)\times \mathcal O(\Omega)$ 
defined in \eqref{F}.
Consider furthermore the relaxed functionals given in \eqref{calF} and
\begin{align}\label{Fsqxrelax}
\mathcal F_{SQf}(\chi,u;A) & :=\inf\Big\{\liminf_{n\to +\infty}
\int_A  SQf(\chi_n(x),\E u_n(x)) \, dx + |D \chi_n|(A) : \\
& (\chi_n,u_n) \in BV(A;\{0,1\})\times LD(A),
u_{n}\to u\text{ in }
L^{1}(A;\mathbb{R}^{N}), \chi_{n} \to \chi \text{ in } L^1(A;\{0,1\})\Big\}.\nonumber
\end{align}

Then, ${\mathcal F}(\cdot,\cdot;\cdot)$ coincides with 
${\mathcal F}_{SQf}(\cdot, \cdot; \cdot)$ in 
$BV(\Omega;\{0,1\})\times BD(\Omega)\times \mathcal O(\Omega)$.
\end{proposition}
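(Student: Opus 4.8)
The plan is to prove the two inequalities separately. That $\mathcal{F}_{SQf}\le\mathcal{F}$ is immediate: every sequence admissible for $\mathcal{F}(\chi,u;A)$ lies, in particular, in $LD(A)$ and so competes for $\mathcal{F}_{SQf}(\chi,u;A)$, while $SQf\le f$ gives $\int_A SQf(\chi_n,\E u_n)\,dx+|D\chi_n|(A)\le F(\chi_n,u_n;A)$; passing to the $\liminf$ and then to the infimum over such sequences yields the inequality. For the reverse one I would use Remark~\ref{remkW11LD} to work with $\mathcal{F}_{LD}$ (so it suffices to exhibit competitors in $LD(A)$) and Lemma~\ref{polyhedra} to reduce, along a minimising sequence, each characteristic function to that of a polyhedral set.

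Concretely, I would isolate the local statement: \emph{if $\chi=\chi_E$ with $E$ polyhedral, $v\in LD(A)$ and $\eta>0$, there is $w\in LD(A)$ with $\|w-v\|_{L^1(A;\mathbb R^N)}<\eta$ and $\int_A f(\chi,\E w)\,dx<\int_A SQf(\chi,\E v)\,dx+\eta$.} To deduce the proposition, take $\{(\chi_n,u_n)\}$ with $u_n\in LD(A)$ for which $\lim_n\big(\int_A SQf(\chi_n,\E u_n)\,dx+|D\chi_n|(A)\big)=\mathcal{F}_{SQf}(\chi,u;A)$; for each $n$, Lemma~\ref{polyhedra} gives polyhedra $E_n^k$ with $\chi_{E_n^k}\to\chi_n$ in $L^1$ and $|D\chi_{E_n^k}|(A)\to|D\chi_n|(A)$ as $k\to+\infty$. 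Using the Lipschitz-type bound $|SQf(q_1,\xi)-SQf(q_2,\xi)|\le C|q_1-q_2|(1+|\xi|)$ — a consequence of \eqref{growth}, \eqref{G} applied to $SQf$ and the uniform energy control of near-optimal fields in \eqref{sqcx0} — together with the absolute continuity of $\int_A(1+|\E u_n|)\,dx$, one gets $\int_A SQf(\chi_{E_n^k},\E u_n)\,dx\to\int_A SQf(\chi_n,\E u_n)\,dx$. Applying the local statement to $(\chi_{E_n^k},u_n,1/k)$ and diagonalising in $k=k(n)$ produces admissible sequences $\tilde\chi_n\to\chi$, $\tilde u_n\to u$ in $L^1$ with $\limsup_n F(\tilde\chi_n,\tilde u_n;A)\le\mathcal{F}_{SQf}(\chi,u;A)$, hence $\mathcal{F}(\chi,u;A)=\mathcal{F}_{LD}(\chi,u;A)\le\mathcal{F}_{SQf}(\chi,u;A)$.

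For the local statement, since $E$ is polyhedral, $A$ is, up to an $\mathcal L^N$-null set, the disjoint union of the open sets $A_1:=\mathrm{int}(E)\cap A$ and $A_0:=A\setminus\overline E$, and $f(\chi,\cdot)\equiv W_i$, $SQf(\chi,\cdot)\equiv SQf(i,\cdot)$ on $A_i$. On each $A_i$ I would carry out the standard recovery-sequence construction for a symmetric quasiconvex envelope, localised so as not to touch $\partial A_i$: cover $\mathcal L^N$-a.e.\ of $A_i$ by a countable family of cubes $Q_j\subset\subset A_i$ on which $\E v$ is, after freezing and truncation, close to a constant $\xi_j$; insert in each $Q_j$ a finely rescaled, compactly supported near-optimal test field for $SQf(i,\xi_j)$, as permitted by the domain-independent formula \eqref{sqcx0}; scale these corrections to be small in $L^1$ and sum them. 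The resulting $w=v+\sum_j\varphi_j$ lies in $LD(A)$, equals $v$ near $\partial A_i$ (so the two branches glue with no interfacial contribution), satisfies $\|w-v\|_{L^1(A;\mathbb R^N)}<\eta$ and $\int_{A_i}W_i(\E w)\,dx<\int_{A_i}SQf(i,\E v)\,dx+\eta$ for each $i$; adding over $i=0,1$ proves the statement after relabelling $\eta$.

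The genuinely delicate point is this last recovery-sequence construction: making the De~Giorgi--Letta / blow-up machinery precise so that the inserted oscillations are at once small in $L^1$, energetically controlled by $SQf(i,\cdot)$ through \eqref{sqcx0}, and supported away from $\partial A_i$. Two devices are what make it go through: passing to polyhedral $\chi$ (Lemma~\ref{polyhedra}), which decouples $W_0$ and $W_1$ onto genuine open sets so that no spurious energy can appear along $J_\chi$, and the equivalence of the various notions of symmetric quasiconvexification recorded in Subsection~\ref{defqcx}, which lets the test fields be taken smooth with compact support. Everything else — the diagonalisations, the passage between $LD$ and $W^{1,1}$, and the elementary continuity estimates for $SQf$ — is routine.
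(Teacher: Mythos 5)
Your proposal is correct, and it fills in precisely the kind of argument the paper leaves implicit: the paper offers no proof of Proposition~\ref{Frel=Frel**}, declaring it standard and pointing to \cite[Lemma 3.1]{RZ} and \cite[Proposition 2.14]{CZasy}. Your trivial inequality is exactly the standard one ($W^{1,1}\subset LD$ and $SQf\le f$), and your reverse inequality follows the same strategy as the cited arguments — reduce, via Remark~\ref{remkW11LD} and a diagonalisation along an almost-minimising sequence for $\mathcal F_{SQf}$, to an upper-bound construction realising $\int_A SQf(\chi,\E v)\,dx$ by $f$-energies of $LD$ competitors close to $v$ in $L^1$. The one genuine difference is how the dependence on $\chi$ is frozen: the standard route covers $A$ by small cubes centred at points of approximate continuity of both $\chi$ and $\E v$ and freezes $\chi$ there, absorbing the error through \eqref{G} and the equiintegrability of $1+|\E v|$, whereas you first pass to polyhedral sets via Lemma~\ref{polyhedra} so that the two phases become genuine open sets and the construction decouples into two $x$-independent problems; this buys a cleaner geometry (no interaction between the freezing of $\chi$ and the cube covering) at the price of the extra approximation step and the continuity estimate $\int_A SQf(\chi_{E_n^k},\E u_n)\,dx\to\int_A SQf(\chi_n,\E u_n)\,dx$, which you justify correctly. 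Two points in your sketch deserve the care you already flag: in the recovery construction the comparison of $W_i(\E v+\E\varphi_j)$ with $W_i(\xi_j+\E\varphi_j)$ cannot use Lipschitz continuity of $W_i$ (it is only continuous), so one should exploit that the near-optimal competitors in \eqref{sqcx0} are $C^\infty_c$, hence have bounded symmetric gradients, and split each cube into the set where $|\E v-\xi_j|$ is small (uniform continuity of $W_i$ on compacta) and a bad set of small measure handled by the growth bound and equiintegrability of $|\E v|$; and it is simpler to use a finite subfamily of the Vitali cover, keeping $w=v$ on the uncovered remainder, whose energy is negligible by absolute continuity of the integral, rather than summing infinitely many corrections. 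With these standard precautions your argument closes, and the conclusion $\mathcal F=\mathcal F_{LD}\le\mathcal F_{SQf}$ follows as you state.
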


In the sequel we rely on the result of Proposition \ref{Frel=Frel**} and assume
that $f$ is symmetric quasiconvex. Together with \eqref{growth}, this entails the Lipschitz continuity of $f$ with respect to the second variable (see \cite{DPR2}). Under this quasiconvexity hypothesis, assuming in addition that \eqref{finfty} holds and taking also into account Proposition \ref{propperH5}, we
recall (cf. \eqref{KSQ}) that our relaxed surface energy density is given by
\begin{equation}\label{K}
K(a,b,c,d,\nu):=\inf\left\{  
	\displaystyle\int_{Q_{\nu}}
	f^{\infty}(\chi(x),\E u(x)) \, dx+|D\chi|(Q_{\nu}):\left(\chi,u\right)
\in\mathcal{A}(a,b,c,d,\nu)\right\}, 
\end{equation}
where, for $(a,b,c,d,\nu)  \in \{0,1\} \times \{0,1\}  \times \mathbb{R}^{N} \times\mathbb{R}^{N} \times S^{N-1},$ the set of admissible functions is
\begin{align}\label{adm}
\mathcal{A}(a,b,c,d,\nu)   &  :=\bigg\{  \left(\chi,u\right)
\in BV_{\rm loc}\left(S_{\nu};\{0,1\}\right)  \times 
W^{1,1}_{\rm loc}\left(S_{\nu};\mathbb{R}^{N}\right)  : \\
&   (\chi(y),u(y)) = (a,c) \text{ if } y\cdot\nu=\frac{1}{2},
~(\chi(y),u(y)) = (b,d) \text{ if } y\cdot\nu=-\frac{1}{2},\nonumber\\
&   (\chi, u)\text{ are 1-periodic in the directions of }\nu_{1},\dots,\nu_{N-1}\bigg\}  ,\nonumber
\end{align}
$\left\{\nu_{1},\nu_{2},\dots,\nu_{N-1},\nu\right\}$ is an orthonormal
basis of $\mathbb{R}^{N}$ and $S_\nu$ is the strip given by
$$S_\nu = \left\{x \in \Rb^N : |x \cdot \nu| < \frac{1}{2}\right\}.$$

The following result provides an alternative characterisation of $K(a,b,c,d,\nu)$
which will be useful to obtain the surface term of the relaxed energy, under hypothesis \eqref{finfty}.
To this end, given $(a,b,c,d,\nu)  \in \{0,1\} \times \{0,1\}  \times \mathbb{R}^{N} \times\mathbb{R}^{N} \times S^{N-1},$ we consider the functions 
\begin{equation}\label{targets}
\chi_{a,b,\nu}(y) := \begin{cases}
a, & {\rm if } \; y \cdot \nu > 0\\
b, & {\rm if } \; y \cdot \nu < 0
\end{cases}
\; \; \; {\rm and } \; \; \;
u_{c,d,\nu}(y) := \begin{cases}
c, & {\rm if } \; y \cdot \nu > 0\\
d, & {\rm if } \; y \cdot \nu < 0.
\end{cases}
\end{equation}

\begin{proposition}\label{Ktilde}
For every $(a,b,c,d,\nu)  \in \{0,1\} \times \{0,1\}  \times \mathbb{R}^{N} \times\mathbb{R}^{N} \times S^{N-1}$ we have
$$K(a,b,c,d,\nu) = \widetilde{K}(a,b,c,d,\nu)$$
where
\begin{align}\label{tilK}
\widetilde{K}(a,b,c,d,\nu) & := \inf\bigg\{\liminf_{n \to + \infty}
\left[\displaystyle\int_{Q_{\nu}}f^{\infty}(\chi_n(x),\E u_n(x)) \, dx
+ |D\chi_n|(Q_{\nu})\right] : \chi_n \in BV\left(Q_{\nu};\{0,1\}\right), \nonumber\\
& \hspace{1,3cm} u_n \in W^{1,1}\left(Q_{\nu};\mathbb{R}^{N}\right), \chi_n \to \chi_{a,b,\nu}
\; {\rm in } \; L^1(Q_{\nu};\{0,1\}), u_n \to u_{c,d,\nu} \; {\rm in } \; L^1(Q_{\nu};\Rb^N) \bigg\}. 
\end{align}
\end{proposition}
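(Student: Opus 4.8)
The plan is to prove the two inequalities $\widetilde{K} \leq K$ and $K \leq \widetilde{K}$ separately, the first being essentially immediate and the second requiring a localisation/cut-off argument to pass from competitors on the whole strip $S_\nu$ to competitors on the single cube $Q_\nu$ converging to the target functions $\chi_{a,b,\nu}$, $u_{c,d,\nu}$.

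For $\widetilde{K} \leq K$, I would take an arbitrary admissible pair $(\chi,u) \in \mathcal{A}(a,b,c,d,\nu)$ for $K$ and, using the $1$-periodicity of $(\chi,u)$ in the directions $\nu_1,\dots,\nu_{N-1}$, rescale: for $m \in \Nb$ set $\chi_m(x) := \chi(mx)$, $u_m(x) := \frac{1}{m}u(mx)$ on $Q_\nu$ (so that $\E u_m(x) = \E u(mx)$). A standard Riemann–Lebesgue argument gives $\chi_m \to \chi_{a,b,\nu}$ in $L^1(Q_\nu;\{0,1\})$ and $u_m \to u_{c,d,\nu}$ in $L^1(Q_\nu;\Rb^N)$ (the boundary data on $\{y\cdot\nu = \pm\frac12\}$ force the averages in the $\nu_i$ directions to be $a$, resp.\ $b$, and $c$, resp.\ $d$), while by the change of variables $y = mx$ and the periodicity one has $\int_{Q_\nu} f^\infty(\chi_m,\E u_m)\,dx = \int_{Q_\nu} f^\infty(\chi,\E u)\,dx$ and $|D\chi_m|(Q_\nu) = |D\chi|(Q_\nu)$ (the latter using $1$-homogeneity of the perimeter under this rescaling, since $D\chi_m$ scales like $m \cdot m^{-N}$ times $m^{N-1}$ cells). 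Hence $(\chi_m,u_m)$ is admissible for $\widetilde{K}$ with the same energy, giving $\widetilde{K}(a,b,c,d,\nu) \leq \int_{Q_\nu} f^\infty(\chi,\E u)\,dx + |D\chi|(Q_\nu)$; taking the infimum over $(\chi,u)$ yields $\widetilde{K} \leq K$.

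For the reverse inequality $K \leq \widetilde{K}$, I would start from sequences $\chi_n \to \chi_{a,b,\nu}$, $u_n \to u_{c,d,\nu}$ realising (up to $\varepsilon$) the infimum defining $\widetilde{K}$, with uniformly bounded energy; by the growth condition \eqref{finftygr} and Proposition~\ref{firstprop}-type bounds, $\{\chi_n\}$ is bounded in $BV(Q_\nu;\{0,1\})$ and $\{\E u_n\}$ is bounded in $L^1$. The issue is that these competitors need not match the boundary traces $(a,c)$ on $\{y\cdot\nu=\frac12\}$ and $(b,d)$ on $\{y\cdot\nu=-\frac12\}$ nor be periodic in the tangential directions, so they are not admissible for $K$. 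I would fix this exactly as in the slicing/nested-subadditivity arguments already developed (Propositions~\ref{slicing}, \ref{nestedsa}, \ref{newslicing}): choose a slightly smaller cube $Q_\nu^\lambda$ (sidelength $1-\lambda$) and interpolate between $(\chi_n,u_n)$ on $Q_\nu^\lambda$ and the target functions $(\chi_{a,b,\nu},u_{c,d,\nu})$ on $Q_\nu \setminus \overline{Q_\nu^\lambda}$ using smooth cut-offs for $u$ (controlling the bulk term by the growth bound and the fact that $u_n \to u_{c,d,\nu}$ in $L^1$, so the interpolation error is $O(\lambda) + o(1)$) and the same "$d^{-1}(\rho)$ good-slice" choice as in the proof of Proposition~\ref{nestedsa} for $\chi$, so that no extra interface is created and $|D\chi|$ over the transition layer is controlled by $\mathcal{H}^{N-1}(\partial V_{\rho_0}) \to 0$ together with $\int_{\partial V_{\rho_0}}|\overline\chi_n - \chi_{a,b,\nu}|\,d\mathcal H^{N-1} \to 0$. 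After this surgery the modified pair agrees with the target on a neighbourhood of $\partial Q_\nu^\lambda$, and periodic reflection in the tangential directions produces an element of $\mathcal{A}(a,b,c,d,\nu)$ (after the usual rescaling of $Q_\nu^\lambda$ back to $Q_\nu$, which leaves the energy densities $f^\infty$ and $|D\chi|$ invariant by $0$-homogeneity of $f^\infty$ in the integrand sense and scaling of the perimeter). Passing to the limit in $n$, then letting $\lambda \to 0^+$ and $\varepsilon \to 0^+$, gives $K(a,b,c,d,\nu) \leq \widetilde{K}(a,b,c,d,\nu)$.

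The main obstacle is the second inequality, and specifically the simultaneous gluing of the $u$-component and the $\chi$-component across the transition layer without paying an uncontrolled surface cost for $\chi$: one must run the change-of-variables / coarea argument of Proposition~\ref{nestedsa} to locate a good level set $\partial V_{\rho_0}$ on which the traces of the approximating $\chi_n$ converge to those of the target $\chi_{a,b,\nu}$ in $L^1(\mathcal H^{N-1})$, and simultaneously ensure $|D\chi|(\partial V_{\rho_0}) = 0$; the bookkeeping that the added interfacial energy is $o(1)$ as $n\to\infty$ and then $O(\lambda)$ is the delicate part, but it is entirely parallel to the arguments already in place in Section~\ref{auxres}.
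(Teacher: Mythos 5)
Your overall strategy (double inequality: a periodicity/Riemann--Lebesgue scaling argument for $\widetilde{K}\le K$ and a boundary-gluing argument for $K\le\widetilde{K}$) is the same as the paper's, but both halves as written contain genuine errors. For $\widetilde{K}\le K$ the rescaling is wrong: first, $u(mx)$ is only defined on $Q_\nu$ after you extend $(\chi,u)$ outside the strip $S_\nu$ by $(a,c)$ above and $(b,d)$ below, and once you do so, your choice $u_m(x):=\frac1m u(mx)$ converges in $L^1(Q_\nu;\Rb^N)$ to $0$, not to $u_{c,d,\nu}$, so the pair $(\chi_m,u_m)$ is not admissible in the definition of $\widetilde{K}$; second, the claimed identity $\int_{Q_\nu}f^{\infty}(\chi_m(x),\E u_m(x))\,dx=\int_{Q_\nu}f^{\infty}(\chi(x),\E u(x))\,dx$ is false for your scaling, since the change of variables $y=mx$ together with tangential periodicity produces a factor $\frac{1}{m^{N}}\cdot m^{N-1}=\frac1m$, so the bulk term actually tends to $0$. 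The correct construction is $u_m(x):=u(mx)$ with no $\frac1m$ factor (the target $u_{c,d,\nu}$ is a jump function, not an affine one), and then $\E u_m(x)=m\,\E u(mx)$ and the energy is preserved precisely because $f^{\infty}(q,\cdot)$ is positively $1$-homogeneous, a property your argument never invokes but which is what makes the scaling work; with this scaling the convergences $\chi_m\to\chi_{a,b,\nu}$, $u_m\to u_{c,d,\nu}$ follow simply because the layer where $(\chi_m,u_m)$ differs from the extended constants has measure $O(1/m)$ (not from any statement about tangential averages being forced to equal $a,b,c,d$, which is not true at intermediate heights).

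For $K\le\widetilde{K}$ the step that fails is gluing $u_n$ directly to the target $u_{c,d,\nu}$ near $\partial Q_\nu^\lambda$. The class $\mathcal{A}(a,b,c,d,\nu)$ requires $u\in W^{1,1}_{\rm loc}(S_\nu;\mathbb{R}^N)$, whereas $u_{c,d,\nu}$ jumps across $\{y\cdot\nu=0\}$; the interpolated field $\varphi u_n+(1-\varphi)u_{c,d,\nu}$ therefore carries a jump inside the transition layer, so after tangentially periodic extension you do not obtain an element of $\mathcal{A}$ (and since the bulk integral $\int f^{\infty}(\chi,\E u)\,dx$ does not record that jump, the omission is not harmless: it is exactly where energy could be lost illegitimately). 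The missing ingredient is the paper's device: replace $u_{c,d,\nu}$ by a $W^{1,1}$ regularization $v_n$ which preserves its traces (Theorem~\ref{densitysmooth}) and apply Proposition~\ref{newslicing} to $\{\chi_n\}$, $\{\chi_{a,b,\nu}\}$, $\{u_n\}$, $\{v_n\}$ (with $f$ replaced by $f^{\infty}$, which satisfies the same growth bounds \eqref{finftygr}); the glued pair then equals $(\chi_{a,b,\nu},v_{n_k})$ near $\partial Q_\nu$, is in $W^{1,1}$, depends only on $y\cdot\nu$ near the lateral faces and has traces $(a,c)$, $(b,d)$ on the top and bottom faces, so its tangentially periodic extension is admissible for $K$ and the energy inequality passes to the limit. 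Your treatment of the $\chi$-component via the good-slice/coarea argument of Proposition~\ref{nestedsa} is fine, and no rescaling of $Q_\nu^\lambda$ is actually needed if the transition is performed inside $Q_\nu$ as in Proposition~\ref{newslicing}.
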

\begin{proof}
The conclusion follows as in \cite[Proposition 3.5]{BBBF}, by proving a double inequality. 

To show that $K(a,b,c,d,\nu) \leq \widetilde{K}(a,b,c,d,\nu)$ we take sequences $\{\chi_n\}$, $\{u_n\}$ as in the definition of $\widetilde{K}(a,b,c,d,\nu)$ and use
Proposition \ref{newslicing}, applied to $\{\chi_n\}$, $\{\chi_{a,b,\nu}\}$,
$\{u_n\}$ and $\{v_n\}$, where $v_n$ is a regularization of $u_{c,d,\nu}$ which preserves its boundary values (cf. Theorem \ref{densitysmooth}).

The reverse inequality is based on the periodicity of the admissible functions for $K(a,b,c,d,\nu)$, together with the Riemann-Lebesgue Lemma.
\end{proof}

\section{Proof of the Main Theorem}\label{mainthm}

Given $\chi \in BV(\Omega;\{0,1\})$ and $u \in BD(\Omega)$, by Proposition \ref{traceRm} we know that $\mathcal F(\chi,u,;\cdot)$ is the restriction to $\mathcal O(\Omega)$ of a Radon measure $\mu$. By Proposition~\ref{firstprop} $i)$ we may decompose $\mu$ as
$$\mu = \mu^a \mathcal L ^N + \mu^j 
+ \mu^c, \;\; \hbox{ with } \mu^j \ll |E^j u|+ |D \chi|.$$
Our aim in this section is to characterise the density $\mu^a$ and the measures
$\mu^j$ and $\mu^c$.

We point out that the measure $\mu^j$ is given by 
$\sigma^j \mathcal H^{N-1} \lfloor (J_\chi \cup J_u)$, for a certain density $\sigma^j$. Indeed, due to the fact that, for $BV$ functions,  
$\mathcal H^{N-1}(S_u \setminus J_u) = 0$, the measure $|D\chi|$ is concentrated on 
$J_\chi$ apart from an $\mathcal H^{N-1}$-negligible set, whereas, by 
\cite[Remark 4.2 and Proposition 4.4]{ACDM},
$|E^ju|$ is concentrated on $J_u$ and it is the only part of the measure $Eu$ that is concentrated on $(n-1)$-dimensional sets.

\subsection{The Bulk Term}\label{bulk}

\begin{proposition}\label{lbbulk}
Let $u \in BD(\Omega)$, $\chi \in BV(\Omega;\{0,1\})$ and let $W_0$ and $W_1$ be continuous functions satisfying \eqref{growth}. 
Assume that $f$ given by \eqref{density} is symmetric quasiconvex. 
Then, for $\LL ^N$ a.e. $x_0 \in \O$,
$$ \mu^a(x_0) = \dfrac{d\mathcal F(\chi, u; \cdot)}{d {\mathcal L}^N}(x_0) \geq f(\chi(x_0), \mathcal E u(x_0)).$$
\end{proposition}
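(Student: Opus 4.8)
The plan is to use the blow-up method of Fonseca and Müller, combined with the fact that $\mathcal F(\chi,u;\cdot)$ is a Radon measure (Proposition \ref{traceRm}), to reduce the lower bound at a point $x_0$ to an inequality of symmetric-quasiconvexity type. First I would fix a point $x_0 \in \Omega$ at which the Radon–Nikodým derivative $\mu^a(x_0)=\frac{d\mathcal F(\chi,u;\cdot)}{d\mathcal L^N}(x_0)$ exists and is finite, $x_0$ is a Lebesgue point of $\chi$, $x_0$ is a point of approximate symmetric differentiability of $u$ in the sense of Theorem \ref{approxsymdiff} (so that \eqref{apdifBD} holds with matrix $\nabla u(x_0)$ and $\mathcal E u(x_0)=\tfrac12(\nabla u(x_0)+\nabla u(x_0)^T)$), and $x_0$ is such that $|E^su|(B(x_0,\varepsilon))/\varepsilon^N\to 0$ and $|D\chi|(B(x_0,\varepsilon))/\varepsilon^N\to 0$; $\mathcal L^N$-a.e.\ point has all these properties. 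Using standard differentiation of measures I would write
$$
\mu^a(x_0)=\lim_{\varepsilon\to 0^+}\frac{\mathcal F(\chi,u;B(x_0,\varepsilon))}{\mathcal L^N(B(x_0,\varepsilon))}
$$
along a sequence of radii $\varepsilon$ with $\mu(\partial B(x_0,\varepsilon))=0$.

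Next, for each such $\varepsilon$ I would invoke Proposition \ref{firstprop} $ii)$ (together with Remark \ref{remkW11LD}, allowing $W^{1,1}$ recovery sequences) to pick $u_{\varepsilon,n}\to u$ in $L^1(B(x_0,\varepsilon);\mathbb R^N)$ and $\chi_{\varepsilon,n}\to\chi$ in $L^1(B(x_0,\varepsilon);\{0,1\})$ attaining $\mathcal F(\chi,u;B(x_0,\varepsilon))$, and then rescale: set $y=(x-x_0)/\varepsilon$ and define, on the unit ball $B$,
$$
v_{\varepsilon,n}(y):=\frac{u_{\varepsilon,n}(x_0+\varepsilon y)-u(x_0)-\varepsilon\,\nabla u(x_0)\,y}{\varepsilon},
\qquad
\vartheta_{\varepsilon,n}(y):=\chi_{\varepsilon,n}(x_0+\varepsilon y).
$$
Then $\mathcal E v_{\varepsilon,n}(y)=\mathcal E u_{\varepsilon,n}(x_0+\varepsilon y)-\mathcal E u(x_0)$, a change of variables gives
$$
\frac{F(\chi_{\varepsilon,n},u_{\varepsilon,n};B(x_0,\varepsilon))}{\varepsilon^N}
=\int_B f\big(\vartheta_{\varepsilon,n}(y),\mathcal E u(x_0)+\mathcal E v_{\varepsilon,n}(y)\big)\,dy+\frac1\varepsilon|D\vartheta_{\varepsilon,n}|(B),
$$
and after a diagonalisation in $(\varepsilon,n)$ one extracts a single sequence $(\vartheta_k,v_k)$ with $v_k\to 0$ in $L^1(B;\mathbb R^N)$ (using \eqref{apdifBD} to control the $L^1$ distance of the rescaled $u$ to the affine map) and $\vartheta_k\to\chi(x_0)$ in $L^1(B;\{0,1\})$ (using that $x_0$ is a Lebesgue point of $\chi$), such that
$$
\omega_N\,\mu^a(x_0)\geq\liminf_{k\to+\infty}\left[\int_B f\big(\vartheta_k(y),\mathcal E u(x_0)+\mathcal E v_k(y)\big)\,dy+|D\vartheta_k|(B)\right].
$$

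Finally I would handle the $\chi$-dependence. Since $f(\cdot,\xi)$ is merely defined on $\{0,1\}$, I would use the estimate \eqref{G}, which gives $|f(\vartheta_k,\xi)-f(\chi(x_0),\xi)|\leq\beta|\vartheta_k-\chi(x_0)|(1+|\xi|)$; together with the bound $\int_B|\mathcal E v_k|\,dy\leq C$ coming from Proposition \ref{firstprop} $i)$ applied on $B(x_0,\varepsilon)$ and the $L^1$ convergence $\vartheta_k\to\chi(x_0)$, this error term would be shown to vanish provided one first truncates: one modifies $v_k$ on the (small-measure) set where $|\mathcal E v_k|$ is large, so that the product $|\vartheta_k-\chi(x_0)|\,|\mathcal E v_k|$ is controlled in $L^1$. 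Here one must be a little careful — discarding the perimeter term $|D\vartheta_k|(B)\geq 0$ is harmless for a lower bound, and after replacing $f(\vartheta_k,\cdot)$ by $f(\chi(x_0),\cdot)$ up to a vanishing error one is left with
$$
\omega_N\,\mu^a(x_0)\geq\liminf_{k\to+\infty}\int_B f\big(\chi(x_0),\mathcal E u(x_0)+\mathcal E v_k(y)\big)\,dy.
$$
Since $f(\chi(x_0),\cdot)$ is symmetric quasiconvex, Jensen-type lower semicontinuity for symmetric-quasiconvex integrands of linear growth along $L^1$-converging sequences in $BD$ (equivalently, the lower semicontinuity result of \cite{E}/\cite{BFT}, applicable after a further approximation since the rescaled competitors may be taken in $W^{1,1}\subset LD$) yields $\int_B f(\chi(x_0),\mathcal E u(x_0)+\mathcal E v_k)\,dy\to$ something $\geq\omega_N f(\chi(x_0),\mathcal E u(x_0))$, which is the claim. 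The main obstacle is this last step: one needs a lower semicontinuity / localisation argument compatible with the presence of the singular parts $|D\vartheta_k|$ and with $v_k$ only converging in $L^1$ (not in any $BD$-intermediate sense), so I expect to invoke Proposition \ref{slicing} to trim the competitors near $\partial B$ and reduce to a genuine relaxation inequality for the fixed symmetric-quasiconvex density $f(\chi(x_0),\cdot)$, which then gives the bound by definition of symmetric quasiconvexity.
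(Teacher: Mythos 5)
Your overall blow-up skeleton is the same as the paper's, and one deviation is perfectly legitimate: where you subtract the affine map $u(x_0)+\nabla u(x_0)(\cdot-x_0)$ using the approximate differentiability Theorem \ref{approxsymdiff}, the paper instead subtracts the projection $P(u_{n,\varepsilon_k})$ onto rigid motions and uses the Poincar\'e--Korn inequality (Theorem \ref{Thm2.8BFT}) plus compactness to produce a limit $v$ with $Ev=\mathcal E u(x_0)\mathcal L^N$; either route, followed by Proposition \ref{slicing} to fix boundary values and then the symmetric quasiconvexity inequality (in its $W^{1,1}_0$ or $LD_{\rm per}$ form), gives the last step. The genuine gap is the step you dispatch in one sentence: freezing $\chi$ at $\chi(x_0)$. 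The error term coming from \eqref{G} is $\int_B|\vartheta_k-\chi(x_0)|\,(1+|\mathcal E u(x_0)+\mathcal E v_k|)\,dy$, and the bound $\int_B|\mathcal E v_k|\,dy\le C$ is of no help by itself, because $|\mathcal E v_k|$ may concentrate exactly on the small set $A_k=\{\vartheta_k\neq\chi(x_0)\}$. Your proposed fix, truncating at a level $t_k$ (i.e.\ working on $\{|\mathcal E v_k|\le t_k\}$), runs into a circular choice of parameters: for the error you need $t_k\,\mathcal L^N(A_k)\to 0$, which forces $t_k$ to be chosen \emph{after} $k$ is large, while for the energy you discard on $\{|\mathcal E v_k|>t_k\}$ to be negligible you need $t_k\to\infty$ for each fixed $k$ first (absolute continuity of a single integrable function); neither ordering makes both quantities small simultaneously without an equiintegrability statement that is uniform in $k$. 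Moreover, ``modifying $v_k$'' where $|\mathcal E v_k|$ is large is not an available operation here: there is no elementary truncation in $LD/BD$ that caps $|\mathcal E v_k|$ pointwise, and if instead you merely restrict the domain of integration, the subsequent application of symmetric quasiconvexity (or of the lower semicontinuity theorem of \cite{E}) fails, since those arguments require the full integral over the cube/ball with prescribed boundary values, and the portion you discarded may carry a nonvanishing share of the energy. In short, your displayed inequality
\begin{equation*}
\omega_N\,\mu^a(x_0)\;\ge\;\liminf_{k\to+\infty}\int_B f\big(\chi(x_0),\mathcal E u(x_0)+\mathcal E v_k(y)\big)\,dy
\end{equation*}
does not follow from what precedes it as written.

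This is exactly the point where the paper invokes Chacon's Biting Lemma: it provides a decreasing family of sets $D_r$ with $\mathcal L^N(D_r)\to 0$ off which the \emph{whole sequence} $\{\mathcal E u_k\}$ is equiintegrable, uniformly in $k$. That uniformity breaks your circularity: on $Q\setminus D_r$ the product $|\vartheta_k-\chi(x_0)|\,|\mathcal E u_k|$ is small once $\mathcal L^N(A_k)$ is small (cf.\ \eqref{11}), while the energy lost on $D_{r_j}$, computed with the frozen argument $\chi(x_0)$, is made small by the diagonal selection \eqref{489}--\eqref{77}. To repair your proof you should replace the truncation remark by this biting-lemma argument (or an equivalent device yielding equiintegrability of $\{\mathcal E v_k\}$ off sets of vanishing measure, uniformly in $k$); the remaining steps of your outline then go through essentially as in the paper.
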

\begin{proof}
Let  $x_0 \in \O$ be a point satisfying
\begin{equation}\label{RNdev}
\mu^a(x_0) = \frac{d \mu}{d \mathcal L^N}(x_0)=
\lim_{\e \to 0^+}\frac{\mu(Q(x_0,\e))}{\e^ N} 
\; \; \mbox{exists and is finite}
\end{equation}
and
\begin{equation}\label{440bis}
\frac{d |E^su|}{d\mathcal{L}^N}(x_0)= 0, \; \; 
\frac{d |D\chi|}{d\mathcal{L}^N}(x_0)=0.
\end{equation}

Furthermore, we choose $x_0$ to be a point of approximate continuity for $u$, for $\E u$ and for $\chi$, namely we assume that 
\begin{equation}\label{uac}
	\lim_{\e\rightarrow 0^+}
	\frac{1}{\e^{N}}\int_{Q\left(x_0,\e\right)}
	\left\vert u(x)-u(x_0)\right\vert \,dx = 0,
\end{equation}
\begin{equation}\label{364}
\lim_{\e \to 0^+} \frac{1}{\e^N} \int_{Q(x_0, \e)} 
|\mathcal E u(x) - \mathcal Eu(x_0)|\, dx = 0
\end{equation}
and
\begin{equation}\label{chiad}
\lim_{\e\rightarrow 0^+}
\frac{1}{\e^{N}}\int_{Q\left(x_0,\e\right)}
\left\vert \chi(x)-\chi(x_0)\right\vert \,dx = 0.
\end{equation}

We observe that the above properties hold for ${\mathcal L}^N$ a.e. $x_0 \in \Omega$ (applying, for instance, \cite[eq. (2.5)]{ACDM} to $u$, 
$\mathcal E u$ and $\chi$).

Assuming that the sequence $\e_k \to 0^+$ is chosen in such a way that 
$\mu(\partial Q(x_0,\e_k)) = 0$, we have
\begin{equation*}
\begin{split}
\mu^a(x_0) = \lim_{\e_k \to 0^+}\frac{\mu (Q(x_0,\e_k))}{\e_k^N} &=  \lim_{\e_k,n}\left[\frac{1}{\e_k^N}\int_{Q(x_0,\e_k)}
f(\chi_n(x),\mathcal E u_n(x))\,dx  +  |D \chi_n|(Q(x_0,\e_k))\right]\\
&\geq \lim_{\e_k,n}\int_Q f(\chi_n(x_0+\e_k y), \mathcal E u_n(x_0+\e_k y))\,dy,
\end{split}
\end{equation*}
where $\chi_n \in BV(Q(x_0,\e_k);\{0,1\})$, $\chi_n \to \chi$ in $L^1(Q(x_0,\e_k);\{0,1\})$ and $u_n \in W^{1,1}(Q(x_0,\e_k);\Rb^N)$, $u_n \to u$ in $L^1(Q(x_0,\e_k);\Rb^N)$. 

Defining 
$$\chi_{n,\e_k}(y):=\chi_n(x_0+\e_k y)- \chi(x_0),$$
it follows by \eqref{chiad} that
\begin{equation}\label{chindeltalim}
\begin{split}
\lim_{\e_k,n}\|\chi_{n,\e_k}\|_{L^1(Q)} = &\lim_{\e_k,n}
\int_Q|\chi_n(x_0+\e_k y)-\chi(x_0)|\,dy \\
=&\lim_{\e_k,n}\frac{1}{\e_k^{N}}\int_{Q(x_0,\e_k)}|\chi_n(x)-\chi(x_0)|\,dx\\
=&\lim_{\e_k \to 0^+}\frac{1}{\e_k^{N}}\int_{Q(x_0,\e_k)}|\chi(x)-\chi(x_0)|\, dx =0. 
\end{split}
\end{equation}

Analogously, letting
$$u_{n,\e_k}(y):=\frac{u_n(x_0+\e_k y)- u(x_0)}{\e_k},$$
then
$\mathcal E u_{n,\e_k}(y)= \mathcal E u_n(x_0+\e_k y)$
and, since $u_{n,\e_k}\in W^{1,1}(\Omega;\Rb^N)$, 
$E u_{n,\e_k}= \mathcal E u_{n,\e_k} \LL^N$. 

Moreover, arguing as in the proof of \cite[Proposition 4.1]{BFT}, exploiting the coercivity of $f$ in the second variable and Theorems \ref{Thm2.8BFT} and \ref{THM2.10BFT}, we conclude that there exists a function $v \in BD(\Omega)$, such that 
$$
\lim_{\e_k,n}\|u_{n,\e_k}- P(u_{n,\e_k})-v\|_{L^1(Q;\mathbb R^N)} = 0,
$$
where $P$ is the projection of $BD(\O)$ onto the kernel of the operator $E$.
Furthermore, given that the point $x_0$ was chosen to satisfy \eqref{440bis} and \eqref{364}, it was shown in \cite[Proposition 4.1, (4.8)]{BFT} that
\begin{equation}\label{measw*}
Ev = \mathcal E u(x_0)\mathcal L^N.
\end{equation}

Therefore, a diagonalisation argument allows to extract subsequences 
$u_k := u_{n_k,\e_k} - P(u_{n_k,\e_k})$ and
$\chi_k := \chi_{n_k,\e_k}$, such that 
\begin{equation}\label{radii}
\begin{split}
& \lim_{k \to +\infty}\|\chi_{k}\|_{L^1(Q)} = 0,\\
& \lim_{k \to +\infty} \|u_{k}- v\|_{L^1(Q;\mathbb R^N)} = 0
\end{split}
\end{equation}
and 
\begin{align}\label{8}
\mu^a(x_0) = \frac{d\mu}{d\mathcal{L}^N}(x_0)\geq \lim_{k \to + \infty}
\int_Q f(\chi(x_0)+ \chi_{k}(y),\mathcal E u_{k}(y)) \, dy.
\end{align}

Our next step is to fix $\chi(x_0)$ in the first argument of $f$ in the previous integral. To this end we make use of Chacon's Biting Lemma 
(see \cite[Lemma 5.32]{AFP}). Indeed, by the coercivity hypothesis \eqref{growth} and
\eqref{8}, the sequence $\{\E u_k\}$ is bounded in $L^1(Q;\Rb^{N\times N}_s)$ so the Biting Lemma guarantees the existence of a (not relabelled) subsequence of $\{u_k\}$ and of a decreasing sequence of Borel sets $D_r$, such that 
$\displaystyle \lim_{r \to + \infty}{\mathcal L}^N(D_r) = 0$ and the sequence 
$\{\mathcal Eu_k\}$ is equiintegrable in $Q\setminus D_r$, 
for any $r \in \mathbb N$.

Since $f \geq 0$, by \eqref{G} and \eqref{8}, we have
\begin{align}\label{9} 
\mu^a(x_0) & \geq \lim_{k \to + \infty}\int_{Q\setminus D_r}
f(\chi(x_0) + \chi_k(y), \mathcal E u_{k}(y))\, dy \nonumber\\
& \geq \lim_{k \to + \infty}
\left\{\int_{Q\setminus D_r} f(\chi(x_0), \mathcal E u_{k}(y)) \, dy - \int_{Q\setminus D_r} C |\chi_{k}(y)|\cdot (1 + |\mathcal E u_k(y)|) \, dy \right\}\nonumber \\
& \geq \lim_{k \to + \infty} \int_{Q \setminus D_r} 
f(\chi(x_0), \mathcal E u_{k}(y)) \, dy - \limsup_{k \to + \infty} 
\int_{Q\setminus D_r} C  |\chi_{k}(y)| \cdot |\mathcal E u_k(y)| \, dy, 
\end{align}
where we used \eqref{chindeltalim}.

We claim that for each $j \in \mathbb N$, there exist $k=k(j)$ and 
$r_j \in \mathbb N$, such that
\begin{equation} \label{489} 
\int_{Q\setminus D_{r_j}} f(\chi(x_0), \E u_{k(j)}(y)) \, dy \geq 
\int_Q f(\chi(x_0), \E u_{k(j)}(y)) \, dy - \frac{C}{j}.
\end{equation}
In light of \eqref{growth}, in order to guarantee that \eqref{489} holds, it suffices to show that for each $j \in \Nb$, there exist $k=k(j)$ and $r_j \in \Nb$, such that
\begin{equation}\label{77}
\int_{D_{r_j}} 1 + |\mathcal E u_{k(j)}(y)| \, dy \leq \frac{1}{j}.
\end{equation}
Suppose not. Then, there exists $j_0 \in \Nb$ such that, for all $r, k \in \Nb$, 
\begin{equation} \label{497}  
\int_{D_r}1 + |\mathcal E u_{k}(y)| \, dy  > \frac{1}{j_0}
\end{equation}
which contradicts the equiintegrability of the constant sequence $\{1 + |\E u_k|\}$,
for $k$ fixed, and the fact that 
$\displaystyle \lim_{r \to + \infty}{\mathcal L}^N(D_r) = 0$.

For this choice of $k(j)$ and $r_j$, we now estimate the last term in \eqref{9}. Since
$|\chi_{k(j)}| \to 0$, as $j \to + \infty$, in $L^1(Q)$, this sequence also converges to zero in measure. Thus, denoting by 
$$\displaystyle A_{k(j)} : = \left\{x \in Q \setminus D_{r_j} : |\chi_{k(j)}(x)| = 1 \right\},$$ 
it follows that for every $\delta > 0$, there
exists $j_0 \in \Nb$ such that $\LL^N(A_{k(j)}) < \delta$, for all $j > j_0$.

On the other hand, because the sequence 
$\{\mathcal Eu_{k(j)}\}$ is equiintegrable in $Q\setminus D_{r_j}$, we know that 
for every $\e > 0$, there exists $\delta = \delta(\e) > 0$ such that for any measurable set $A \subset Q\setminus D_{r_j}$ with $\LL^N(A) < \delta(\e)$ we have
$\displaystyle \int_{A}|\mathcal Eu_{k(j)}(y)| \, dy < \e$.
Choosing $j$ large enough so that $\LL^N(A_{k(j)}) < \delta(\e)$ we obtain
$\displaystyle \int_{A_{k(j)}}|\mathcal Eu_{k(j)}(y)| \, dy < \e$ and hence
\begin{equation}\label{11}
\int_{Q\setminus D_{r_j}}
|\chi_{k(j)}(y) \cdot |\mathcal Eu_{k(j)}(y)| \, dy < \e,
\end{equation}
for every sufficiently large $j$.

Therefore, up to the extraction of a further subsequence, and denoting in what follows $\chi_j := \chi_{k(j)}$, $v_j : = u_{k(j)}$ and 
$ D_j:=D_{r_j}$,  \eqref{9}, \eqref{489} and \eqref{11} yield
\begin{align*}
\mu^a(x_0) = \frac{d\mu}{d\mathcal{L}^N}(x_0) & \geq \liminf_{j \to + \infty}
\left(\int_Q f(\chi(x_0),\mathcal E v_j(y)) \, dy - \frac{C}{j}\right) 
-\limsup_{j \to + \infty}\int_{Q\setminus D_{j}} 
C |\chi_j(y)| \cdot |\mathcal E v_j(y)|\, dy \\
& \geq \liminf_{j \to + \infty} \int_Q f(\chi(x_0),\mathcal E v_j(y)) \, dy - \e.
\end{align*}

Since $v_j \to v$ in $L^1(Q;\Rb^N)$, Proposition \ref{slicing} allows us to assume, without loss of generality, that $v_j = v$ on $\partial Q$. Hence, using the symmetric quasiconvexity of $f$ in the second variable, which also holds for test functions in $LD_{\rm per}(Q)$ (cf. Remark \ref{LDper}), and \eqref{measw*}, we obtain
\begin{align*}
\mu ^a(x_0) &\geq 
\liminf_{j \to + \infty} \int_Q f(\chi(x_0),\mathcal E v_j(y)) \, dy - \e \\
&\geq \liminf_{j \to + \infty} 
\int_Q f(\chi(x_0),\E u(x_0) + \mathcal E (v_j - v)(y)) \, dy - \e \\
& \geq f(\chi(x_0),\mathcal E u(x_0)) - \e,
\end{align*}
so to conclude it suffices to let $\e \to 0^+$.
\end{proof}

\begin{proposition}\label{upbbulk}
Let $u \in BD(\Omega)$, $\chi \in BV(\Omega;\{0,1\})$ and let $W_0$ and $W_1$ be continuous functions satisfying \eqref{growth}. 
Let $f$ be given by \eqref{density} and assume that $f$ is symmetric quasiconvex. 
Then, for $\LL^N$ a.e. $x_0 \in \O$,
$$ \mu^a(x_0) = \dfrac{d\mathcal F(\chi, u; \cdot)}{d {\mathcal L}^N}(x_0) \leq f(\chi(x_0), \mathcal E u(x_0)).$$
\end{proposition}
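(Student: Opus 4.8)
The plan is to establish the upper bound $\mu^a(x_0) \le f(\chi(x_0),\E u(x_0))$ for $\mathcal L^N$-a.e. $x_0$ by constructing, near such a point, a recovery sequence whose energy density localises to the value $f(\chi(x_0),\E u(x_0))$. Since $\mathcal F(\chi,u;\cdot)$ is a Radon measure by Proposition~\ref{traceRm}, it suffices to show that for a.e. $x_0$ one has $\limsup_{\e\to 0^+}\mu(Q(x_0,\e))/\e^N \le f(\chi(x_0),\E u(x_0))$. First I would fix $x_0$ to be a point enjoying all the good properties used in Proposition~\ref{lbbulk}: the Radon--Nikod\'ym derivative \eqref{RNdev} exists and is finite; $x_0$ has vanishing singular densities \eqref{440bis}; $x_0$ is a point of approximate continuity for $\chi$, for $\E u$, and a point of approximate \emph{symmetric} differentiability of $u$ in the sense of Theorem~\ref{approxsymdiff}, so that $\frac1{\e^{N+1}}\int_{Q(x_0,\e)}|u(y)-u(x_0)-\nabla u(x_0)(y-x_0)|\,dy\to 0$. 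All of these hold $\mathcal L^N$-a.e.

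The key construction: for small $\e$, on the cube $Q(x_0,\e)$ take the competitor $\chi_\e \equiv \chi(x_0)$ (a constant, hence in $BV$ with zero total variation on the cube) and a competitor $u_\e$ for the displacement. Because $f$ is symmetric quasiconvex, the natural choice is $u_\e(y) := u(x_0) + \nabla u(x_0)(y-x_0) + \e\,\varphi\!\left(\frac{y-x_0}{\e}\right)$ where $\varphi \in LD_{\rm per}(Q)$ (equivalently, smooth periodic after a density argument, cf. Remark~\ref{LDper}) is chosen so that $\int_Q f(\chi(x_0),\E u(x_0)+\E\varphi(z))\,dz$ is close to its infimum, which by symmetric quasiconvexity of $f$ equals $f(\chi(x_0),\E u(x_0))$ itself; thus in fact one may simply take $\varphi=0$ and $u_\e$ affine. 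Rescaling $y = x_0+\e z$ gives $\frac1{\e^N}\int_{Q(x_0,\e)} f(\chi_\e,\E u_\e)\,dy = f(\chi(x_0),\E u(x_0))$ exactly, plus $|D\chi_\e|(Q(x_0,\e))=0$. The issue is that $u_\e = u$ is required on $\partial Q(x_0,\e)$ in order to glue $u_\e$ into the measure $\mu$ via the localisation/gluing machinery — or, more precisely, one must produce from $u_\e$ and $u$ a genuine competitor for $\mathcal F(\chi,u;Q(x_0,\e))$, i.e. a sequence converging to $u$ in $L^1$. Here one invokes Proposition~\ref{slicing} (or Proposition~\ref{newslicing}) to replace the affine function by one agreeing with $u$ near $\partial Q(x_0,\e)$ at asymptotically no extra cost, exactly as in the closing lines of the proof of Proposition~\ref{lbbulk}; the approximate differentiability of $u$ at $x_0$ guarantees $u_\e\to u$ in $L^1(Q(x_0,\e))$ after rescaling, and the approximate continuity of $\chi$ and of $\E u$ guarantees the perturbation introduced by the gluing has vanishing energy in the limit $\e\to0^+$.

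Assembling these pieces: by the definition of $\mathcal F$ as a relaxation and by Proposition~\ref{firstprop}, $\mu(Q(x_0,\e)) = \mathcal F(\chi,u;Q(x_0,\e)) \le \liminf$ of the energies of the glued competitors, which is $\int_{Q(x_0,\e)} f(\chi(x_0),\E u(x_0))\,dy + o(\e^N) = \e^N f(\chi(x_0),\E u(x_0)) + o(\e^N)$, the error term collecting the contribution of replacing $\chi(x_0)$ by $\chi$, of replacing $\E u(x_0)$ by $\E u$, and of the gluing layer near $\partial Q(x_0,\e)$; each is controlled using \eqref{G}, the linear growth \eqref{growth}, \eqref{chiad}, \eqref{364}, and the vanishing-singular-density conditions \eqref{440bis}. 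Dividing by $\e^N$ and letting $\e\to0^+$ along a sequence with $\mu(\partial Q(x_0,\e_k))=0$ yields the claim. The main obstacle is the gluing step: one needs the transition from the affine (or periodic-perturbed) profile to $u$ on a thin shell near $\partial Q(x_0,\e)$ to cost only $o(\e^N)$, and this is precisely where Proposition~\ref{slicing} together with the quantitative approximate differentiability at $x_0$ — rather than mere $L^1$ convergence — is essential; without the rate in \eqref{apdifBD} the shell contribution need not be negligible.
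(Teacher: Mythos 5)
There is a genuine gap in your construction. The functional $\mathcal F(\chi,u;Q(x_0,\e))$ is defined through \emph{sequences} $(\chi_n,u_n)$ converging to $(\chi,u)$ in $L^1$ of the \emph{fixed} cube $Q(x_0,\e)$. Neither of your competitors is admissible in this sense: the constant $\chi_\e\equiv\chi(x_0)$ does not converge to $\chi$ in $L^1(Q(x_0,\e))$ (approximate continuity \eqref{chiad} only makes it close in an averaged sense as $\e\to0$), and the affine map $u(x_0)+\nabla u(x_0)(\cdot-x_0)$ is likewise not close to $u$ on a fixed cube. For the same reason Proposition~\ref{slicing} cannot be invoked as you propose: its hypothesis is $u_n-v_n\to0$ in $L^1(A)$ on a fixed domain, which holds in the proof of Proposition~\ref{lbbulk} because there the slicing is applied to the blown-up sequence $v_j\to v$ in $L^1(Q)$, but fails here since ``affine minus $u$'' does not tend to zero on $Q(x_0,\e)$ with $\e$ fixed; gluing the affine profile to $u$ near $\partial Q(x_0,\e)$ produces a single function that still does not converge to $u$ inside the cube, hence is not a competitor for $\mathcal F$. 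A single-function, boundary-value argument of the kind you sketch only becomes legitimate after replacing $\mathcal F$ by the Dirichlet-type minima $m$ (via Lemmas~\ref{BFT312} and \ref{BFT314}), machinery you do not invoke and which the paper reserves for the surface term; even there the boundary datum for the characteristic function is $\chi$, not the constant $\chi(x_0)$.

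The paper's proof avoids all of this by building a genuinely admissible sequence for each fixed $\e$: it keeps $\chi_n=\chi$ and takes $u_{n,\e}=\rho_n\ast u+\e\,\phi_n\bigl(\frac{\cdot-x_0}{\e}\bigr)$ with $\phi_n(x)=\frac1n\phi(nx)$, $\phi$ periodic and nearly optimal in \eqref{phi}; as $n\to\infty$ this converges to $u$ in $L^1(Q(x_0,\e))$, so no gluing and no approximate differentiability \eqref{apdifBD} are needed. The energy is then split into the main term $I_1$ (handled by periodicity and symmetric quasiconvexity), the error $I_2$ from replacing $\chi(x)$ by $\chi(x_0)$ (controlled by \eqref{G}, \eqref{x07}, the Riemann--Lebesgue lemma, Lemma~\ref{lemma2.2BFT}, \eqref{x02} and \eqref{364}), and the error $I_3$ from replacing $\E u_{n,\e}$ by $\E u(x_0)+\E\phi_n$ (as in \cite{BFT}), while the perimeter contribution is $o(\e^N)$ by \eqref{x03}. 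The mollification-plus-vanishing-oscillation competitor is the missing idea in your proposal; as written, your recovery sequence does not exist within the definition of $\mathcal F$.
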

\begin{proof}
Choose a point $x_0 \in \O$ such that \eqref{uac},\eqref{364}, \eqref{chiad} hold, 
\begin{equation}\label{x02}
\lim_{\varepsilon \to 0^+}
\frac{1}{\e ^N}|E^s u|(Q(x_0, \e)) = 0,
\end{equation}
\begin{equation}\label{x03}
\lim_{\varepsilon \to 0^+}
\frac{1}{\e ^N}|D \chi|(Q(x_0, \e)) = 0,
\end{equation}
and, furthermore, such that
\begin{equation}\label{x05}
\mu^a(x_0) = \lim_{\varepsilon \to 0^+}
\frac{\mathcal F(\chi, u;Q(x_0,\e))}{\e ^N} \; \; \; \mbox{exists and is finite},
\end{equation}
where the sequence of $\e \to 0^+$ is chosen so that $|E u|(\partial Q(x_0,\e)) = 0$.
Notice that $\LL^N$ almost every point $x_0 \in \O$ satisfies the above properties.

For the purposes of this proof we assume that $\chi(x_0) = 1$, the case 
$\chi(x_0) = 0$ is treated in a similar fashion. Thus, it follows from \eqref{chiad} that
\begin{equation}\label{x07}
\lim_{\varepsilon \to 0^+}
\frac{1}{\e ^N}\LL^N\big(Q(x_0, \e) \cap \{\chi = 0\}\big) = 0.
\end{equation}
Using the symmetric quasiconvexity of $f$, fix $\delta > 0$ and let 
$\phi \in C^{\infty}_{\rm per}(Q;\Rb^N)$ be such that
\begin{equation}\label{phi}
\int_Qf(\chi(x_0),\E u(x_0) + \E \phi(x)) \, dx \leq f(\chi(x_0),\E u(x_0)) + \delta.
\end{equation}
We extend $\phi$ to $\Rb^N$ by periodicity, define 
$\phi_n(x) := \dfrac{1}{n}\phi(nx)$ and consider the sequence of functions in $W^{1,1}(Q(x_0,\e);\Rb^N)$ given by
$$u_{n,\e}(x) : = (\rho_n * u)(x) + \e \phi_n\left(\frac{x - x_0}{\e}\right).$$
The periodicity of $\phi$ ensures that, as $ n \to + \infty$, $u_{n,\e} \to u$ in 
$L^1(Q(x_0,\e);\Rb^N)$ and so, letting $\chi_n = \chi$, $\forall n \in \Nb$, the
sequences $\{u_{n,\e}\}_n$ and $\{\chi_n\}_n$ are admissible for 
$\mathcal F(\chi,u;Q(x_0,\e))$. Hence, by \eqref{x03}, we have
\begin{align*}
\mu^a(x_0) &= \lim_{\varepsilon \to 0^+}
\frac{\mathcal F(\chi, u;Q(x_0,\e))}{\e ^N} \leq 
\liminf_{\varepsilon \to 0^+}\liminf_{n \to +\infty}\frac{1}{\e^N}
\Big(\int_{Q(x_0, \e)}f(\chi(x),\E u_{n,\e}(x)) \, dx + |D\chi|(Q(x_0,\e))\Big)\\
&=\liminf_{\varepsilon \to 0^+}\liminf_{n \to +\infty}\frac{1}{\e^N}
\int_{Q(x_0, \e)}f\left(\chi(x),\E u_{n,\e}(x)\right) \, dx \\
&\leq \limsup_{\varepsilon \to 0^+}\limsup_{n \to +\infty}\frac{1}{\e^N}
\int_{Q(x_0, \e)}f\left(\chi(x_0),\E u(x_0) + \E \phi_n\left(\frac{x-x_0}{\e}\right)\right) \, dx \\
& + \limsup_{\varepsilon \to 0^+}\limsup_{n \to +\infty}\frac{1}{\e^N}
\int_{Q(x_0, \e)} f\left(\chi(x),\E u_{n,\e}(x)\right) - 
f\left(\chi(x_0),\E u_{n,\e}(x))\right)\, dx \\
& + \limsup_{\varepsilon \to 0^+}\limsup_{n \to +\infty}\frac{1}{\e^N}
\int_{Q(x_0, \e)} f\left(\chi(x_0),\E u_{n,\e}(x)\right) - 
f\left(\chi(x_0),\E u(x_0) + \E \phi_n\left(\frac{x-x_0}{\e}\right)\right)\, dx \\
& =: I_1 + I_2 + I_3.
\end{align*}
By changing variables, using the periodicity of $\phi$ and \eqref{phi}, it follows that
\begin{align*}
I_1 &= \limsup_{n \to +\infty}\int_Q f(\chi(x_0),\E u(x_0) + \E \phi_n(y)) \, dy
= \limsup_{n \to +\infty}\int_Q f(\chi(x_0),\E u(x_0) + \E \phi(ny)) \, dy\\
&= \limsup_{n \to +\infty}\int_Q f(\chi(x_0),\E u(x_0) + \E \phi(x)) \, dx 
\leq f(\chi(x_0),\E u(x_0)) + \delta.
\end{align*}
Consequently, to complete the proof it remains to show that $I_2 = I_3 = 0$ and finally to let $\delta \to 0^+$. 
To conclude that $I_3 = 0$ we reason exactly as in \cite[Proposition 4.2]{BFT} since
$\chi(x_0)$ is fixed in both terms of the integrand. As for $I_2$, since 
$\chi(x_0) = 1$, we have by \eqref{G},
\begin{align*}
I_2 &=  \limsup_{\varepsilon \to 0^+}\limsup_{n \to +\infty}\frac{1}{\e^N}
\int_{Q(x_0, \e) \cap \{\chi = 0\}} f\left(0,\E u_{n,\e}(x)\right) - 
f\left(1,\E u_{n,\e}(x))\right)\, dx \\
&\leq \limsup_{\varepsilon \to 0^+}\limsup_{n \to +\infty}\frac{C}{\e^N}
\int_{Q(x_0, \e) \cap \{\chi = 0\}} 1 + 
\left |\E (u*\rho_n)(x) + \E \phi_n\left(\frac{x-x_0}{\e}\right)\right| \, dx,
\end{align*}
where, by periodicity and the Riemann-Lebesgue Lemma,
\begin{align*}
\limsup_{\varepsilon \to 0^+}\limsup_{n \to +\infty}\frac{C}{\e^N}
\int_{Q(x_0, \e) \cap \{\chi = 0\}}
&\left |\E \phi_n\left(\frac{x-x_0}{\e}\right)\right| \, dx 
= \limsup_{\varepsilon \to 0^+}\limsup_{n \to +\infty}
C \int_{Q \cap \{y : \chi(x_0 + \e y) = 0\}}\left |\E\phi(ny)\right| \, dy\\
&\hspace{2cm} = \limsup_{\varepsilon \to 0^+}C \int_{Q \cap \{y : \chi(x_0 + \e y) = 0\}}
\left(\int_Q|\E \phi(x)| \, dx\right) \, dy\\
&\hspace{2cm} = \limsup_{\varepsilon \to 0^+}\frac{C}{\e^N}
\LL^N\left(Q(x_0,\e) \cap \{\chi = 0\}\right)\int_Q|\E \phi(x)| \, dx = 0
\end{align*}
by \eqref{x07}. 
On the other hand, since $|E u|$ does not charge the boundary of $Q(x_0,\e)$, using Lemma \ref{lemma2.2BFT}, \eqref{x02}, \eqref{364} and \eqref{x07},
it follows that
\begin{align*}
& \limsup_{\varepsilon \to 0^+}\limsup_{n \to +\infty}\frac{C}{\e^N}
\int_{Q(x_0, \e) \cap \{\chi = 0\}}
\left |\E (u*\rho_n)(x)\right| \, dx \\ 
& \leq \limsup_{\varepsilon \to 0^+}\limsup_{n \to +\infty}\frac{C}{\e^N}
\int_{Q(x_0, \e + \frac{1}{n}) \cap \{\chi = 0\}}d|E u|(x)  \\
&= \limsup_{\varepsilon \to 0^+}\frac{C}{\e^N}
\int_{Q(x_0, \e) \cap \{\chi = 0\}}|\E u(x)| \, dx  \\
& \leq \limsup_{\varepsilon \to 0^+}\frac{C}{\e^N}
\int_{Q(x_0, \e)}|\E u(x) - \E u(x_0)| \, dx +
\limsup_{\varepsilon \to 0^+}\frac{C |\E u(x_0)|}{\e^N}
\LL^N\left(Q(x_0, \e) \cap \{\chi = 0\}\right) = 0.
\end{align*}
Therefore, a final application of \eqref{x07} allows us to conclude that $I_2 = 0$.
\end{proof}

\begin{remark}\label{norestsqcx}
{\rm We stress that the symmetric quasiconvexity hypothesis on $f$ in Proposition \ref{upbbulk} is not a restriction for the proof of Theorem \ref{main}, in view of Proposition \ref{Frel=Frel**}.}
\end{remark}

\subsection{The Cantor Term}\label{Cantor}

This section is devoted to the identification of the density of $\mathcal F$ in \eqref{calFint} with respect to $|E^c u|$. To this end, we start by observing that, by virtue of Proposition \ref{Frel=Frel**}, there is no loss of generality in assuming that $f$ is symmetric quasiconvex. If this symmetric quasiconvexity hypothesis on $f$ is omitted, the result of the next proposition holds provided we replace $f^\infty$ by $(SQf)^\infty$, whereas, due to the inequality 
$(SQf)^{\infty} \leq f^{\infty}$,  \eqref{ubCantor} holds as stated.

\begin{proposition}\label{lbCantor}
Let $u \in BD(\Omega)$, $\chi \in BV(\Omega;\{0,1\})$ and let $W_0$ and $W_1$ be continuous functions satisfying \eqref{growth}. 
Assume that $f$ given by \eqref{density} is symmetric quasiconvex.
Then, for $|E^cu|$ a.e. $x_0 \in \O$,
$$ \mu^c(x_0) = \dfrac{d\mathcal F(\chi, u; \cdot)}{d |E^cu|}(x_0) \geq f^{\infty}\left(\chi(x_0), \frac{d E^c u}{d |E^c u|}(x_0)\right).$$
\end{proposition}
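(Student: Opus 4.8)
\textbf{Proof proposal for Proposition~\ref{lbCantor}.}

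The plan is to blow up at a $|E^cu|$-generic point $x_0$ and compare with the Cantor part of a purely $BD$ functional with $\chi$ frozen at $x_0$. First I would select $x_0 \in \O$ at which the Radon--Nikod\'ym derivative $\mu^c(x_0)=\frac{d\mu}{d|E^cu|}(x_0)$ exists and is finite, at which $\frac{d(\L^N+|E^ju|+|D\chi|)}{d|E^cu|}(x_0)=0$, at which the rescaled measures $\frac{|Eu|(Q_\nu(x_0,\e))}{|E^cu|(Q_\nu(x_0,\e))}$ converge to $1$ and the direction of polarisation $\frac{dE^cu}{d|E^cu|}(x_0)=:\xi_0$ exists, and—crucially—at which $\chi$ has an approximate limit $\chi(x_0)$ with respect to $|E^cu|$ in the sense $\lim_{\e\to0^+}\frac{1}{|E^cu|(Q_\nu(x_0,\e))}\int_{Q_\nu(x_0,\e)}|\chi(x)-\chi(x_0)|\,d|E^cu|(x)=0$; such points carry full $|E^cu|$-measure. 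Using the measure representation and choosing cubes with $\mu(\partial Q_\nu(x_0,\e))=0$, I would pass to recovery sequences $(\chi_n,u_n)$ for $\mathcal F(\chi,u;Q_\nu(x_0,\e))$ and obtain, after the standard rescaling $y\mapsto (x-x_0)/\e$ and normalisation by $t_\e:=|E^cu|(Q_\nu(x_0,\e))/\e^N\to+\infty$ (which blows up because $x_0$ is a Cantor point), the bound
$$
\mu^c(x_0)\ \geq\ \liminf \frac{1}{t_\e}\int_{Q_\nu} f\big(\chi(x_0)+\chi_{n,\e}(y),\,\mathcal E u_{n,\e}(y)\big)\,dy,
$$
where $\chi_{n,\e}\to 0$ in $L^1$. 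The main technical point here is that, because $t_\e\to\infty$, the relevant quantity is governed by $f^\infty$ rather than $f$.

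Next I would freeze $\chi$. As in the bulk estimate (Proposition~\ref{lbbulk}), I would invoke Chacon's Biting Lemma applied to the $L^1$-bounded sequence $\{\mathcal E u_{n,\e}/t_\e^{1/2}\}$ or, more simply, work directly with the renormalised gradients; the estimate \eqref{G} gives
$$
\Big|\frac{1}{t_\e}f(\chi(x_0)+\chi_{n,\e},\mathcal E u_{n,\e}) - \frac{1}{t_\e}f(\chi(x_0),\mathcal E u_{n,\e})\Big| \leq \frac{C}{t_\e}|\chi_{n,\e}|\,(1+|\mathcal E u_{n,\e}|),
$$
and the product $|\chi_{n,\e}|\cdot|\mathcal E u_{n,\e}|/t_\e$ is controlled by combining the $L^1$-smallness of $\chi_{n,\e}$ with an equiintegrability extracted from Biting, exactly as in \eqref{11}. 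This reduces the problem to bounding $\liminf \frac{1}{t_\e}\int_{Q_\nu} f(\chi(x_0),\mathcal E u_{n,\e})\,dy$ from below by $f^\infty(\chi(x_0),\xi_0)$.

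Finally I would apply the $BD$ result of Caroccia, Focardi \& Van Goethem \cite{CFVG} (as in the paper's discussion) to the frozen functional $v\mapsto \int f(\chi(x_0),\mathcal E v)\,dx$, whose relaxation to $BD$ has, by that integral representation together with the blow-up at a Cantor point, Cantor density equal to $(SQf)^\infty(\chi(x_0),\cdot)=f^\infty(\chi(x_0),\cdot)$—the last equality holding since $f$ is assumed symmetric quasiconvex, so that its recession function is already symmetric quasiconvex (cf. \cite[Remarks 8, 9]{R} and Proposition~\ref{SQfinfty=}). Passing to the limit, using the convergence $u_{n,\e}\to$ a limit field whose symmetrised derivative concentrates in the direction $\xi_0$ at mass $1$ (the analogue of \eqref{measw*} for the Cantor case, obtained via Theorems~\ref{Thm2.8BFT} and \ref{THM2.10BFT} and a diagonalisation), yields $\mu^c(x_0)\geq f^\infty(\chi(x_0),\xi_0)$. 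I expect the principal obstacle to be the simultaneous management of the two limits ($\e\to0^+$ and $n\to\infty$) together with the Biting subsequences: one must diagonalise so that the frozen-$\chi$ sequence is still a competitor for the blown-up frozen functional while retaining the concentration of $\mathcal E u$ in the rank-one direction $\xi_0$ with the correct normalised mass, which is where the hypotheses \eqref{x02}--type conditions at $x_0$ (translated to $|E^cu|$) and the homogeneity of $f^\infty$ do the essential work.
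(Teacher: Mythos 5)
Your overall strategy coincides with the paper's: freeze $\chi$ at $x_0$ using \eqref{G}, the $L^1$-smallness of the rescaled $\chi$'s and Chacon's Biting Lemma exactly as in Proposition~\ref{lbbulk}, and then lower-bound the frozen term through the integral representation of \cite{CFVG} for the auxiliary functional $\mathcal F_1(u;\cdot)$ (with $f_1=f(\chi(x_0),\cdot)$), whose density with respect to $|E^cu|$ is $f^\infty(\chi(x_0),\cdot)$ evaluated at $\frac{dE^cu}{d|E^cu|}(x_0)$. Up to that point your plan is the paper's proof.

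The genuine problem is the way you propose to close the argument: you pass to a blow-up limit field whose symmetrised derivative ``concentrates in the direction $\xi_0$ at mass $1$'', presented as the Cantor analogue of \eqref{measw*} and obtained from Theorems~\ref{Thm2.8BFT} and \ref{THM2.10BFT} plus diagonalisation. This step would fail: Poincar\'e--Korn and $BD$-compactness only give some $BD$ limit of the rescaled, projection-corrected sequence; they give no control whatsoever on the structure of $Ev$ at a Cantor point, and identifying blow-ups of $Eu$ there is precisely the deep issue (it is where \cite{DPR0}-type rigidity enters, and even then Cantor blow-ups need not have the simple one-directional form you assume). Fortunately the step is also unnecessary, and the paper avoids it: since $u_n\to u$ in $L^1(Q(x_0,\e_k);\Rb^N)$, the sequence is admissible for $\mathcal F_1(u;Q(x_0,\e_k))$ in the \emph{unrescaled} variables, so
$\liminf_n\frac{1}{|E^cu|(Q(x_0,\e_k))}\int_{Q(x_0,\e_k)}f_1(\E u_n)\,dx\ \geq\ \frac{\mathcal F_1(u;Q(x_0,\e_k))}{|E^cu|(Q(x_0,\e_k))}$,
and letting $k\to+\infty$ at a point where \eqref{CdensFi} holds gives directly $f^\infty(\chi(x_0),\frac{dE^cu}{d|E^cu|}(x_0))$; no blow-up limit of $u$ is ever identified. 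Two smaller remarks: the condition you call crucial (approximate continuity of $\chi$ with respect to $|E^cu|$) is not the one you actually use — what makes $\chi_{n,\e}\to 0$ in $L^1(Q_\nu)$ is the Lebesgue-based condition \eqref{chiad}, which does hold $|E^cu|$-a.e.\ because $|E^cu|(S_\chi)=0$; and the Biting Lemma should be applied, after a diagonalisation, to the products $\delta_k\chi_k\,\E u_k$ (with $\delta_k=1/t_{\e_k}$), whose $L^1$-bound comes from coercivity and the finiteness of $\mu^c(x_0)$ in \eqref{Cdens} — the normalisation $t_\e^{1/2}$ you suggest has no role.
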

\begin{proof}
Let  $x_0 \in \O$ be a point satisfying \eqref{uac}, \eqref{chiad} and
\begin{equation}\label{Cdens}
\mu^c(x_0) = \dfrac{d\mathcal F(\chi, u; \cdot)}{d |E^cu|}(x_0)
= \frac{d \mu}{d |E^cu|}(x_0) = 
\lim_{\e \to 0^+}\frac{\mu(Q(x_0,\e))}{|E^cu|(Q(x_0,\e))} 
\; \; \mbox{exists and is finite,}
\end{equation}
these properties hold for $|E^c u|$ a.e. $x_0 \in \Omega$. Indeed, by \cite[Theorem 6.1]{ACDM}, 
$|E u|(S_u\setminus J_u)=0$, thus $|E^c u|(S_u\setminus J_u)=0$. Hence, by \cite[Propositions 3.5 and 4.4]{ACDM}, we have 
$$|E^c u|(S_u)= |E^c u|(J_u)+ |E^c u|(S_u\setminus J_u)=0,$$ which justifies the validity of \eqref{uac}. As for \eqref{chiad}, this is a well known property of $BV$ functions (cf. \cite{AFP}).

We define
$$f_0(\xi) = f(0, \xi) \mbox{ and } f_1(\xi) = f(1, \xi), \forall \xi \in \Rb^{N \times N}_s$$
and we consider the auxiliary functionals
\begin{align}\label{auxf}
\mathcal F_i(u; A) &:= \inf \Big \{\liminf_{n \to + \infty} 
\int_A f_i (\E u_n(x)) \, dx :  u_n \in W^{1, 1}(A; \Rb^N), u_n \to u \; {\rm in} \; L^1(A;\Rb^N)\Big \}, \, i=0,1. 
\end{align}
Referring to Theorem 6.1, Remark 6.4 and Corollary 6.8 in \cite{CFVG},
$\mathcal F_i(u;\cdot)$, $i=0,1$,  
are the restriction to $\mathcal O(\Omega)$ of Radon measures whose densities with respect to $|E^c u|$ are given by 
\begin{equation}\label{CdensFi}
\frac{d \mathcal F_i(u;\cdot)}{d|E^c u|}(x_0) 
= f_{i}^\infty \Big(\frac{d E^c u}{d|E^c u|}(x_0)\Big) =
f^\infty \Big(i,\frac{d E^c u}{d|E^c u|}(x_0)\Big)
\end{equation}
for $|E^c u|$ a.e. $x_0 \in \Omega$. Choose $x_0$ so that it also satisfies \eqref{CdensFi}, $i=0,1$.

In what follows we assume, without loss of generality, that $\chi(x_0)=1$, the case $\chi(x_0)=0$ can be treated similarly. Bearing this choice in mind we work with the 
functional \eqref{auxf} and we will make use of \eqref{CdensFi}, with $i=1$.
\color{black}
Selecting the sequence $\e_k \to 0^+$ in such a way that 
$\mu(\partial Q(x_0,\e_k)) = 0$ and $Q(x_0,\e_k) \subset \Omega$, we have
\begin{align*}
\mu^c(x_0) &= \lim_{k \to +\infty}\frac{\mu (Q(x_0,\e_k))}{|E^cu|(Q(x_0,\e_k))} \\
&=  \lim_{k,n}\left[\frac{1}{|E^cu|(Q(x_0,\e_k))}\int_{Q(x_0,\e_k)}
f(\chi_n(x),\mathcal E u_n(x))\,dx  +  |D \chi_n|(Q(x_0,\e_k))\right]
\end{align*}
where $\chi_n \in BV(Q(x_0,\e_k);\{0,1\})$, $\chi_n \to \chi$ in $L^1(Q(x_0,\e_k);\{0,1\})$, $u_n \in W^{1,1}(Q(x_0,\e_k);\Rb^N)$, $u_n \to u$ in $L^1(Q(x_0,\e_k);\Rb^N)$.
Taking into account that we are searching for a lower bound for $\mu^c(x_0)$,
we neglect the perimeter term $|D \chi_n|(Q(x_0,\e_k))$ and obtain
\begin{align}\label{lbmuc}
\mu^c(x_0) & \geq
\liminf_{k,n}\frac{1}{|E^cu|(Q(x_0,\e_k))}
\int_{Q(x_0,\e_k)}f(\chi_n(x),\mathcal E u_n(x))\,dx  \\
 & \geq
\liminf_{k,n}\frac{1}{|E^cu|(Q(x_0,\e_k))}
\int_{Q(x_0,\e_k)}f_1(\mathcal E u_n(x))\,dx \nonumber\\
& \hspace{3cm} + \liminf_{k,n}\frac{1}{|E^cu|(Q(x_0,\e_k))}
\int_{Q(x_0,\e_k)}f(\chi_n(x),\mathcal E u_n(x)) - f(1,\mathcal E u_n(x))\,dx \nonumber\\
& \geq \liminf_k \frac{\mathcal F_1(u;Q(x_0,\e_k))}{|E^cu|(Q(x_0,\e_k))}+ \liminf_{k,n}I_{k,n} \nonumber\\
& \geq \frac{d \mathcal F_{1}(u;\cdot)}{d|E^c u|}(x_0) + \liminf_{k,n}I_{k,n}
 \nonumber\\
& \geq  f^\infty \Big(1,\frac{d E^c u}{d|E^c u|}(x_0)\Big) + \liminf_{k,n}I_{k,n}
\label{lbmuc2}
\end{align}
where
$$I_{k,n} = \frac{1}{|E^cu|(Q(x_0,\e_k))}
\int_{Q(x_0,\e_k)}f(\chi_n(x),\mathcal E u_n(x)) - f(1,\mathcal E u_n(x))\,dx.$$
It remains to estimate this term. Changing variables we get
\begin{align}\label{Ikn}
\left|I_{k,n}\right| &= \left|\frac{\e_k^N}{|E^cu|(Q(x_0,\e_k))}
\int_Q f(\chi_n(x_0+\e_k y),\mathcal E u_n(x_0+\e_k y)) - 
f(1, \mathcal E u_n(x_0+\e_k y))\,dy\right| \nonumber \\
&= \left|\delta_k
\int_Q f(\chi_{n,k}(y) + 1,\mathcal E u_{n,k}(y)) - 
f(1, \mathcal E u_{n,k}(y))\,dy\right|
\end{align}
where 
$$\displaystyle \delta_k : = \frac{\e_k^N}{|E^cu|(Q(x_0,\e_k))}, \; \;
\chi_{n,k}(y) := \chi_n(x_0+\e_k y) - 1, \; \; 
u_{n,k}(y) := \frac{u_n(x_0+\e_k y) - u(x_0)}{\e_k}.$$
By \eqref{chiad} it follows that 
$\displaystyle \lim_{k,n}\|\chi_{n,k}\|_{L^1(Q)} = 0$ (see \eqref{chindeltalim})
and $\displaystyle \lim_k \delta_k = 0$.
Thus, using also \eqref{G}, we have from \eqref{Ikn}
\begin{align}\label{Ikn2}
\liminf_{k,n}\left|I_{k,n}\right| &\leq \limsup_{k,n} \delta_k 
\int_Q\left | f(\chi_{n,k}(y) + 1,\mathcal E u_{n,k}(y)) - 
f(1, \mathcal E u_{n,k}(y))\right|\,dy \nonumber \\
&\leq \limsup_{k,n} C \delta_k 
\int_Q |\chi_{n,k}(y)|\big(1 + |\mathcal E u_{n,k}(y)|\big) \, dy \nonumber \\
& = \limsup_{k,n} C \delta_k 
\int_Q |\chi_{n,k}(y)| \, |\mathcal E u_{n,k}(y)| \, dy.
\end{align}
From the growth condition from below on $f$, \eqref{lbmuc} and \eqref{Cdens} we conclude that
\begin{align}
\limsup_{k,n} C \delta_k 
\int_Q |\chi_{n,k}(y)| \, |\mathcal E u_{n,k}(y)| \, dy &\leq
\limsup_{k,n} \frac{C}{|E^cu|(Q(x_0,\e_k))}
\int_{Q(x_0,\e_k)}|\mathcal E u_{n}(x)| \, dx \nonumber \\
&\leq \limsup_{k,n} \frac{C}{|E^cu|(Q(x_0,\e_k))}
\int_{Q(x_0,\e_k)}f(\chi_n(x),\mathcal E u_{n}(x)) \, dx \nonumber \\
&\leq  C \mu^c(x_0) < + \infty. \nonumber
\end{align}
Using a diagonalisation argument, let $\chi_k := \chi_{n(k),k}$, 
$u_k := u_{n(k),k}$ be such that $\chi_k \to 0$ in $L^1(Q)$ and
\begin{equation}\label{Ikn3}
\limsup_{k,n}C \delta_k 
\int_Q |\chi_{n,k}(y)| \, |\mathcal E u_{n,k}(y)| \, dy =
\lim_{k}C \delta_k 
\int_Q |\chi_{k}(y)| \, |\mathcal E u_{k}(y)| \, dy < + \infty.
\end{equation}
Therefore, the sequence $\{\delta_k \chi_k \, \mathcal E u_{k}\}$ is bounded in $L^1(Q;\Rb^{N \times N}_s)$
so, by the Biting Lemma, there exists a subsequence (not relabeled) and there exist
sets $D_r \subset Q$ such that 
$\displaystyle \lim_{r \to + \infty}{\mathcal L}^N(D_r) = 0$ and the sequence 
$\{\delta_k \chi_k \, \mathcal Eu_k\}$ is equiintegrable in $Q\setminus D_r$, 
for any $r \in \mathbb N$. Following the reasoning in the proof of 
Proposition~\ref{lbbulk} (see \eqref{77}), for any $j \in \Nb$ there exist $k(j), r(j) \in \Nb$ such that
\begin{equation}\label{Ikn4}
\delta_k(j)
\int_{D_{r(j)}} |\chi_{k(j)}(y)| \, |\mathcal E u_{k(j)}(y)| \, dy \leq \frac{1}{j}.
\end{equation}
The fact that $\chi_{k(j)} \to 0$, as $j \to + \infty$, in $L^1(Q)$ and the equiintegrability of $\{\delta_{k(j)} \chi_{k(j)} \, \mathcal Eu_{k(j)}\}$ in 
$Q\setminus D_{r(j)}$ ensures that, for any $\e > 0$, 
\begin{equation}\label{Ikn5}
\delta_k(j)
\int_{Q \setminus D_{r(j)}} |\chi_{k(j)}(y)| \, |\mathcal E u_{k(j)}(y)| \, dy < \e,
\end{equation}
provided $j$ is large enough (see the argument used to obtain \eqref{11}).
Hence, from \eqref{lbmuc2}, \eqref{Ikn2}, \eqref{Ikn3}, \eqref{Ikn4} and \eqref{Ikn5}
we conclude that
$$\mu^c(x_0) \geq f^\infty \Big(1,\frac{d E^c u}{d|E^c u|}(x_0)\Big),$$
which completes the proof.
\end{proof}

\begin{proposition}
Let $u \in BD(\Omega)$, $\chi \in BV(\Omega;\{0,1\})$ and let $W_0$ and $W_1$ be continuous functions satisfying \eqref{growth}. 
Assume that $f$ given by \eqref{density} is symmetric quasiconvex.
Then, for $|E^cu|$ a.e. $x_0 \in \O$,
\begin{equation}\label{ubCantor}
\mu^c(x_0) = \dfrac{d\mathcal F(\chi, u; \cdot)}{d |E^cu|}(x_0) \leq f^{\infty}\left(\chi(x_0), \frac{d E^c u}{d |E^c u|}(x_0)\right).
\end{equation} 
\end{proposition}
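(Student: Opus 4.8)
The plan is to mirror the proof of Proposition~\ref{lbCantor}, again exploiting the auxiliary functionals $\mathcal F_i(u;\cdot)$ of \eqref{auxf}, which by \cite{CFVG} are restrictions of Radon measures with $\frac{d\mathcal F_i(u;\cdot)}{d|E^cu|}(x_0)=f^\infty\big(i,\frac{dE^cu}{d|E^cu|}(x_0)\big)$ (see \eqref{CdensFi}), and handling the coupling between $\chi$ and $u$ by a Chacón Biting Lemma argument. First I would fix a point $x_0\in\O$ at which \eqref{uac}, \eqref{chiad} and \eqref{CdensFi} ($i=0,1$) hold, at which $\mu^c(x_0)=\lim_{\e\to0^+}\mathcal F(\chi,u;Q(x_0,\e))/|E^cu|(Q(x_0,\e))$ exists and is finite, and at which, since $\L^N$ and $|D\chi|$ are both singular with respect to $|E^cu|$ (the former trivially, the latter because $|D\chi|$ is carried by the $\cH^{N-1}$-finite set $J_\chi$, on which $|E^cu|$ vanishes),
$$\lim_{\e\to0^+}\frac{\L^N(Q(x_0,\e))+|D\chi|(Q(x_0,\e))}{|E^cu|(Q(x_0,\e))}=0;$$
all these properties hold for $|E^cu|$-a.e.\ $x_0$. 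As in Proposition~\ref{lbCantor} I assume $\chi(x_0)=1$, the case $\chi(x_0)=0$ being symmetric with $W_0$, $\mathcal F_0$ in place of $W_1$, $\mathcal F_1$.

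Next, for small $\e$ with $|Eu|(\partial Q(x_0,\e))=0$ and $Q(x_0,\e)\subset\subset\O$, I would take a recovery sequence $\{u_n^\e\}_n\subset W^{1,1}(Q(x_0,\e);\Rb^N)$ for $\mathcal F_1(u;Q(x_0,\e))$, so $u_n^\e\to u$ in $L^1$ and $\int_{Q(x_0,\e)}f_1(\E u_n^\e)\,dx\to\mathcal F_1(u;Q(x_0,\e))$, and test $\mathcal F(\chi,u;Q(x_0,\e))$ with the admissible pair $(\chi,u_n^\e)$. Writing $f(\chi,\xi)=f_1(\xi)+(1-\chi)\big(f_0(\xi)-f_1(\xi)\big)$ and using $|f_0(\xi)-f_1(\xi)|\le\beta(1+|\xi|)$ from \eqref{G}, this gives
\begin{align*}
\mathcal F(\chi,u;Q(x_0,\e))&\le\mathcal F_1(u;Q(x_0,\e))+|D\chi|(Q(x_0,\e))+\beta\,\L^N(\{\chi=0\}\cap Q(x_0,\e))\\
&\quad+\beta\limsup_{n\to+\infty}\int_{\{\chi=0\}\cap Q(x_0,\e)}|\E u_n^\e(x)|\,dx.
\end{align*}
Dividing by $|E^cu|(Q(x_0,\e))$ and letting $\e\to0^+$, the first term on the right tends to $f^\infty\big(1,\frac{dE^cu}{d|E^cu|}(x_0)\big)$ by \eqref{CdensFi}, the second tends to $0$ by the choice of $x_0$, and the third is $\le\beta\,\e^N/|E^cu|(Q(x_0,\e))\to0$; so everything reduces to showing that the last term is $o\big(|E^cu|(Q(x_0,\e))\big)$.

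This concentration estimate is the heart of the matter and I would treat it exactly as the cross term in Proposition~\ref{lbCantor}. Setting $v_n^\e(y):=u_n^\e(x_0+\e y)/\e$, so $\E v_n^\e(y)=\E u_n^\e(x_0+\e y)$, $\delta_\e:=\e^N/|E^cu|(Q(x_0,\e))\to0$, and $B_\e:=\{y\in Q:\chi(x_0+\e y)=0\}$ with $\L^N(B_\e)=\e^{-N}\int_{Q(x_0,\e)}|\chi-1|\,dx\to0$ by \eqref{chiad}, the quantity in question is $\delta_\e\int_{B_\e}|\E v_n^\e|\,dy$, whose renormalised total mass $\delta_\e\int_Q|\E v_n^\e|\,dy$ stays bounded as $\e\to0^+$ (after $n\to+\infty$) thanks to $f_1\ge\alpha|\cdot|$. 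A diagonalisation then produces $\e_k\downarrow0$, $n_k\uparrow+\infty$ for which $g_k:=\delta_{\e_k}\E v_{n_k}^{\e_k}$ is bounded in $L^1(Q;\Rb^{N\times N}_s)$ and $\limsup_n\delta_{\e_k}\int_{B_{\e_k}}|\E v_n^{\e_k}|\,dy\le\int_{B_{\e_k}}|g_k|\,dy+1/k$. Chacón's Biting Lemma, applied to $\{g_k\}$ along a further subsequence, yields Borel sets $D_r\downarrow$ with $\L^N(D_r)\to0$ and $\{g_k\}$ equiintegrable on $Q\setminus D_r$ for each $r$; then, reproducing verbatim the diagonal argument of \eqref{77}--\eqref{11} — choosing $k(j)\uparrow+\infty$ and $r(j)$ with $\int_{D_{r(j)}}(1+|g_{k(j)}|)\,dy\le1/j$, and using $\L^N(B_k)\to0$ together with the equiintegrability off the $D_r$'s — one obtains $\int_{B_k}|g_k|\,dy\to0$. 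Hence the last term above is negligible and $\mu^c(x_0)\le f^\infty\big(\chi(x_0),\frac{dE^cu}{d|E^cu|}(x_0)\big)$. I expect this Biting Lemma step to be the only real obstacle: recovery sequences for $\mathcal F_1(u;Q(x_0,\e))$ must reproduce the Cantor part of $Eu$, so their symmetrised gradients concentrate and $\int_{\{\chi=0\}\cap Q(x_0,\e)}|\E u_n^\e|$ cannot be controlled by the mere smallness of $\L^N(\{\chi=0\}\cap Q(x_0,\e))$; the biting decomposition, combined with the diagonal choice of $(k,r)$ borrowed from Proposition~\ref{lbCantor}, is precisely what forces the cross term to vanish, while the reduction to $\mathcal F_1$, the splitting via \eqref{G}, and the singularities of $\L^N$ and $|D\chi|$ with respect to $|E^cu|$ are routine.
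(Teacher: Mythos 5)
Your proof is correct and follows essentially the same route as the paper's: both test $\mathcal F$ with the constant sequence $\chi_n=\chi$ and a recovery sequence for the auxiliary functional $\mathcal F_1$ of \eqref{auxf}, invoke \eqref{CdensFi} from \cite{CFVG} together with the singularity of $\L^N$ and $|D\chi|$ with respect to $|E^cu|$, and kill the cross term coming from \eqref{G} by the same rescaling, Chac\'on Biting Lemma and diagonal equiintegrability argument already used in Proposition \ref{lbCantor}. The only cosmetic difference is that you apply the biting decomposition to $\delta_\e\,\E v^\e_n$ itself rather than to its product with the indicator of $\{\chi=0\}$, which changes nothing in the estimate.
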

\begin{proof}
Let $x_0 \in \Omega$ be a point satisfying \eqref{Cdens}, \eqref{uac}, \eqref{chiad} (which hold for $|E^cu|$ a.e. $x \in \Omega$, as observed in the proof of Proposition \ref{lbCantor}) and, in addition,
\begin{equation}\label{Dchi}
\lim_{\e \to 0^+}\frac{|D\chi|(Q(x_0,\e))}{|E^cu|(Q(x_0,\e))} = 0,
\end{equation}
 
Assuming, once again, that $\chi(x_0) = 1$, we also require that $x_0$ satisfies \eqref{CdensFi}. 
Choosing the sequence $\e_k \to 0^+$ in such a way that 
$\mu(\partial Q(x_0,\e_k)) = 0$ and $Q(x_0,\e_k) \subset \Omega$, 
let $u_n \in W^{1,1}(Q(x_0,\e_k);\Rb^N)$ be such that $u_n \to u$ in $L^1(Q(x_0,\e_k);\Rb^N)$ and
\begin{equation}\label{F1Cdens}
\dfrac{d\mathcal F_1(u; \cdot)}{d |E^cu|}(x_0) =
\lim_{k \to +\infty}\frac{\mathcal F_1(u;Q(x_0,\e_k))}{|E^cu|(Q(x_0,\e_k))} 
= \lim_{k,n}\frac{1}{|E^cu|(Q(x_0,\e_k))}
\int_{Q(x_0,\e_k)}f_1(\mathcal E u_n(x)) \, dx.
\end{equation}
Then, as the constant sequence $\chi_n = \chi$ is admissible for 
$\mathcal F(\chi,u;Q(x_0,\e_k))$, from \eqref{Dchi}, \eqref{F1Cdens} and \eqref{CdensFi} with $i=1$, it follows that
\begin{align*}
\mu^c(x_0) &= \lim_{k \to +\infty}
\frac{\mathcal F(\chi,u;Q(x_0,\e_k))}{|E^cu|(Q(x_0,\e_k))} \\
&\leq \liminf_{k,n}\left[\frac{1}{|E^cu|(Q(x_0,\e_k))}\int_{Q(x_0,\e_k)}
f(\chi(x),\mathcal E u_n(x))\,dx  +  |D \chi|(Q(x_0,\e_k))\right] \\
&\leq \lim_{k,n}\frac{1}{|E^cu|(Q(x_0,\e_k))}\int_{Q(x_0,\e_k)}
f(1,\mathcal E u_n(x))\,dx  \\
&\hspace{3cm} + \limsup_{k,n}\frac{1}{|E^cu|(Q(x_0,\e_k))}\int_{Q(x_0,\e_k)}
f(\chi(x),\mathcal E u_n(x)) - f(1,\mathcal E u_n(x)) \,dx  \\
& = f^{\infty}\left(\chi(x_0), \frac{d E^c u}{d |E^c u|}(x_0)\right) + \limsup_{k,n}\frac{1}{|E^cu|(Q(x_0,\e_k))}\int_{Q(x_0,\e_k)}
\hspace{-0,4cm}f(\chi(x),\mathcal E u_n(x)) - f(1,\mathcal E u_n(x)) \,dx.
\end{align*}
The same argument used in the proof of Proposition~\ref{lbCantor}, now applied to the sequences 
$$\chi_{k}(y) = \chi(x_0 + \e_ky) - 1, \; \;   
u_{n,k}(y) := \frac{u_n(x_0+\e_k y) - u(x_0)}{\e_k},$$ yields
$$\limsup_{k,n}\frac{1}{|E^cu|(Q(x_0,\e_k))}\int_{Q(x_0,\e_k)}
f(\chi(x),\mathcal E u_n(x)) - f(1,\mathcal E u_n(x)) \,dx = 0$$
from which the conclusion follows.
\end{proof}

\subsection{The Surface Term}\label{surface}

Given $x_0 \in J_\chi \cup J_u$ we denote by $\nu(x_0)$ the vector $\nu_u(x_0)$, 
if $x_0 \in J_u \setminus J_\chi$, whereas $\nu(x_0) := \nu_{\chi}(x_0)$ if
$x_0 \in J_\chi \setminus J_u$, these vectors are well defined as Borel measurable functions for $\cH^{N-1}$ a.e. $x_0 \in J_\chi \cup J_u$. Due to the rectifiability of both $J_\chi$ and $J_u$ (cf. \cite[Theorems 3.77 and 3.78]{AFP} and \cite[Proposition 3.5 and Remark 3.6]{ACDM}),
for $\cH^{N-1}$ a.e. $x_0 \in J_\chi \cap J_u$ we may select 
$\nu(x_0) := \nu_{\chi}(x_0) = \nu_u(x_0)$ where the orientation of $\nu_{\chi}(x_0)$ is
chosen so that $\chi^+(x_0) = 1$, $\chi^-(x_0) = 0$ and then $u^+(x_0)$ and $u^-(x_0)$ are selected according to this orientation.

Thus, in the sequel for $\mathcal H^{N-1}$ a.e. $x_0 \in J_{\chi} \cup J_u$,  the vector $\nu(x_0)$ is defined according to the above considerations.

Given that $\mathcal H^{N-1}(S_\chi \setminus J_\chi) = 0$ and that all points in $\Omega \setminus S_\chi$ are Lebesgue points of $\chi$, in what follows we take  
$\chi^+(x_0) = \chi^-(x_0) = \widetilde\chi(x_0)$ for
$\mathcal H^{N-1}$- a.e. $x_0 \in J_u \setminus J_\chi$, where $\tilde v$ denotes the precise representative of a field $v$ in $BV$, cf. Section \ref{BV}. On the other hand, for a $BD$ function $u$ it is not known whether 
$\mathcal H^{N-1}(S_u \setminus J_u) = 0$. However, given that all points in 
$\Omega \setminus S_u$ are Lebesgue points of $u$ and that, by \cite[Remark 6.3]{ACDM} and the $\mathcal H^{N-1}$ rectifiability of $J_\chi$, 
$\mathcal H^{N-1}(S_u \setminus J_u)\cap J_\chi)=0$, we may consider 
$u^+(x_0) = u^-(x_0) = \widetilde u(x_0)$ for $\mathcal H^{N-1}$ a.e.
$x_0 \in J_\chi \setminus J_u$, where $\tilde v$ denotes the Lebesgue representative of a field $v$ in $BD$ (cf. \cite[page 206]{ACDM}), see also \cite{B}.

In order to describe $\mu^j$ we will follow the ideas of the global method for relaxation introduced in \cite{BFM} (see also \cite{BFT} and \cite{CFVG}), the sequential characterisation of $K(a,b,c,d, \nu)$, obtained in Proposition \ref{Ktilde}, will also be used.

Given $u \in BD(\O)$, $\chi \in BV(\O;\{0,1\})$ and $V \in \mathcal O_{\infty}(\O)$
we define
\begin{equation}\label{m}
	m(\chi,u;V) := \inf\left\{\mathcal F (\theta,v;V) : 
\theta \in  BV(\O;\{0,1\}), v \in BD(\O), \theta = \chi \mbox{ on } \partial V,
v = u \mbox{ on } \partial V\right\}.
\end{equation}

Our goal is to show the following result.

\begin{proposition}\label{mg}
Let $f$ be given by \eqref{density}, where $W_0$ and $W_1$ are continuous functions satisfying \eqref{growth}.
Given $u \in SBD(\Omega)$ and $\chi \in BV(\Omega;\{0,1\})$,  we have
$$\mathcal{F}( \chi, u; V \cap (J_\chi \cup J_u)) =
\int_{V \cap (J_\chi \cup J_u)} 
g(x,\chi^+(x),\chi^-(x),u^+(x),u^-(x),\nu(x))\, d\cH^{N-1}(x),$$
where 
\begin{equation}\label{defg}
g(x_0,a,b,c,d,\nu) := \limsup_{\varepsilon \to 0^+}
\frac{m(\chi_{a,b,\nu}(\cdot - x_0),u_{c,d,\nu}(\cdot - x_0);Q_\nu(x_0,\e))}
{\e^{N-1}}
\end{equation}
and $\chi_{a,b,\nu}$, $u_{c,d,\nu}$ were defined in \eqref{targets}.
\end{proposition}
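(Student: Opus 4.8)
The plan is to apply the global method for relaxation of Bouchitt\'e, Fonseca \& Mascarenhas \cite{BFM} to the measure $\mu = \mathcal F(\chi,u;\cdot)$, whose existence is guaranteed by Proposition~\ref{traceRm}, together with the auxiliary minimisation problem $m(\chi,u;\cdot)$ defined in \eqref{m}. The general framework of \cite{BFM} asserts that if $\mu$ is the restriction to $\mathcal O(\Omega)$ of a Radon measure, if the functional is local and admits the minimal-value function $m$, and if certain comparison estimates between $\mu$ and $m$ hold, then the Radon--Nikod\'ym derivative of $\mu$ with respect to $\mathcal H^{N-1}\lfloor (J_\chi\cup J_u)$ at a point $x_0$ coincides with the blow-up limit $\limsup_{\e\to 0^+} m(\chi,u;Q_\nu(x_0,\e))/\e^{N-1}$. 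Thus the first step is to verify the abstract hypotheses of \cite{BFM} in our setting: that $\mathcal F$ is local, that it is a measure (done), that $m(\chi,u;V)\le \mathcal F(\chi,u;V)$ trivially and, conversely, that the two agree asymptotically under blow-up. The growth conditions \eqref{growth} and Proposition~\ref{firstprop}~$i)$ provide the uniform bounds $C^{-1}(|Eu|+|D\chi|)\le \mathcal F\le C(\mathcal L^N+|Eu|+|D\chi|)$ needed to control the measure.

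Next I would carry out the blow-up analysis at an $\mathcal H^{N-1}$-a.e.\ jump point $x_0\in J_\chi\cup J_u$. By the rectifiability of $J_\chi$ and $J_u$ and the results of \cite{ACDM} recalled above, for $\mathcal H^{N-1}$-a.e.\ such $x_0$ the traces $\chi^\pm(x_0)$, $u^\pm(x_0)$ and the normal $\nu(x_0)$ are well defined, and the rescaled functions $\chi(x_0+\e\cdot)$, $u(x_0+\e\cdot)$ converge in $L^1_{\rm loc}$ to the planar profiles $\chi_{a,b,\nu}$, $u_{c,d,\nu}$ with $(a,b,c,d,\nu)=(\chi^+(x_0),\chi^-(x_0),u^+(x_0),u^-(x_0),\nu(x_0))$. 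One shows, using a standard change of variables and the boundedness of $m(\chi,u;Q_\nu(x_0,\e))$ by $C\e^{N-1}$ (a consequence of choosing as competitor an explicit one-dimensional transition between the two phases, costing $O(\e^{N-1})$), that
$$\frac{d\mathcal F(\chi,u;\cdot)}{d\,\mathcal H^{N-1}\lfloor(J_\chi\cup J_u)}(x_0)
= \limsup_{\e\to 0^+}\frac{m(\chi,u;Q_\nu(x_0,\e))}{\e^{N-1}}
= \limsup_{\e\to 0^+}\frac{m(\chi_{a,b,\nu}(\cdot-x_0),u_{c,d,\nu}(\cdot-x_0);Q_\nu(x_0,\e))}{\e^{N-1}},$$
the last equality being the content of the comparison between the minimisation problem for $(\chi,u)$ and for its blow-up limit; this requires a Lipschitz-type continuity of $m$ with respect to perturbations of the boundary datum in $L^1$, which in turn follows from the growth conditions and a cut-off/gluing construction (of the type used in Proposition~\ref{nestedsa} and Proposition~\ref{newslicing}) to interpolate between admissible competitors without creating extra interfaces. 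The identification of this blow-up limit with $g(x_0,a,b,c,d,\nu)$ in \eqref{defg} is then immediate.

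The hard part will be the precise verification that the asymptotic comparison between $m(\chi,u;Q_\nu(x_0,\e))$ and $m(\chi_{a,b,\nu}(\cdot-x_0),u_{c,d,\nu}(\cdot-x_0);Q_\nu(x_0,\e))$ holds after normalisation by $\e^{N-1}$, i.e.\ that replacing $(\chi,u)$ by its vertical/planar limit profile on the boundary of the small cube changes the minimal value by $o(\e^{N-1})$. This is where the subtlety of the $BD$ setting enters: unlike in $BV$, one does not have the full approximate differentiability property \eqref{apdif} for $u$, only the weaker \eqref{apsymdif}, and the Cantor part of $Eu$ must be shown to contribute negligibly to the surface measure. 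The key inputs are that $|E^c u|$ charges no set of finite $\mathcal H^{N-1}$ measure, that $|D\chi|$ is concentrated on $J_\chi$ up to an $\mathcal H^{N-1}$-null set, and that the traces of $\chi$ and $u$ on $\partial Q_\nu(x_0,\e)$ are, for a.e.\ $\e$, close in $L^1(\partial Q_\nu(x_0,\e))$ to the traces of the planar profiles (by a Fubini/coarea slicing argument combined with \eqref{uac}-type estimates valid at jump points). Once these are in place, a gluing argument inserting a thin transition layer near $\partial Q_\nu(x_0,\e)$, whose energetic cost is controlled by $C\e^{N-1}$ times the $L^1$-discrepancy of the traces plus $o(1)$, closes the estimate. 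Finally, the full statement $\mathcal F(\chi,u;V\cap(J_\chi\cup J_u))=\int_{V\cap(J_\chi\cup J_u)} g(\cdots)\,d\mathcal H^{N-1}$ follows by integrating the pointwise Radon--Nikod\'ym identity over $V\cap(J_\chi\cup J_u)$, using that, as noted after Proposition~\ref{traceRm}, $\mu^j$ is absolutely continuous with respect to $\mathcal H^{N-1}\lfloor(J_\chi\cup J_u)$ and that the bulk and Cantor parts of $\mu$, identified in Subsections~\ref{bulk} and \ref{Cantor}, do not charge $J_\chi\cup J_u$.
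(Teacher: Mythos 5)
Your plan is correct and follows the same overall architecture as the paper: the global method of \cite{BFM} adapted to $BD$, i.e.\ (i) the measure property of $\mathcal F(\chi,u;\cdot)$ from Proposition~\ref{traceRm}, (ii) the identification of $\mathcal F$ with the Vitali-type envelope of $m$ and the resulting equality of the blow-up ratios of $\mathcal F$ and $m$ (the paper's Lemmas~\ref{BFT311} and~\ref{BFT312}, which are exactly the ``abstract hypotheses'' you invoke), (iii) a Lipschitz estimate for $m$ in terms of the $L^1(\partial V;\cH^{N-1})$ distance of the boundary traces (Lemma~\ref{BFT310}, proved by precisely the cut-off/gluing you describe), and (iv) integration of the Radon--Nikod\'ym derivative using that the bulk and Cantor parts do not charge $J_\chi\cup J_u$. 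The one place where you diverge is the step you correctly single out as the hard one: how to show that replacing $(\chi,u)$ by the planar profiles on $\partial Q_\nu(x_0,\e)$ changes $m$ by $o(\e^{N-1})$. You propose a Fubini/coarea slicing argument, which only yields trace closeness along selected radii and would then force extra bookkeeping (passing between nearby radii, or a transition layer) to control the $\limsup$ over all $\e$ in \eqref{defg}. The paper avoids this entirely: at $\cH^{N-1}$-a.e.\ $x_0\in J_\chi\cup J_u$ one also has \eqref{symjump} and \eqref{perchi0}, so the rescaled functions $u_\e$, $\chi_\e$ converge to $u_0$, $\chi_0$ not merely in $L^1$ but in the intermediate (strict) topology of $BD$ and $BV$; since the trace operator is continuous for that topology, the trace discrepancy in \eqref{trcont} tends to zero for the full family $\e\to 0^+$, and Lemma~\ref{BFT310} then gives the comparison directly, with no selection of good slices and no additional gluing layer. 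So your route is viable in principle but leaves that uniformity-in-$\e$ issue open, whereas the strict-convergence/trace-continuity device is what closes it cleanly; if you flesh out your version, you should either adopt that device or explain how the a.e.-$\e$ slicing information is upgraded to control the $\limsup$ over all scales.
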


The proof of the above proposition relies on a series of auxiliary results, based on Lemmas 3.1, 3.3 and 3.5 in \cite{BFM} and which were adapted to the $BD$ case in \cite{BFT}[Lemmas 3.10, 3.11 and 3.12]. The properties of $\mathcal F(\chi,u;A)$ established in Proposition \ref{firstprop}, and the fact that 
$\mathcal F(\chi,u;\cdot)$ is a Radon measure, ensure that we can apply the reasoning given in their respective proofs.

\begin{lemma}\label{BFT310}
Let $f$ be given by \eqref{density}, where $W_0$ and $W_1$ are continuous functions satisfying \eqref{growth}.
Then there exists a positive constant $C$ such that
$$|m(\chi_1,u_1;V) - m(\chi_2,u_2;V)| \leq 
C\left[\int_{\partial V}|{\rm tr } \, \chi_1(x) - {\rm tr}\,\chi_2(x)| +  
|{\rm tr}\,u_1(x) - {\rm tr}\,u_2(x)| \, d\cH^{N-1}(x) \right],$$
for every $\chi_1,\chi_2 \in BV(\O;\{0,1\})$, $u_1,u_2 \in BD(\O)$ and any
$V \in \mathcal O_{\infty}(\O).$
\end{lemma}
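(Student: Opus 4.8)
The plan is to mimic the argument of \cite[Lemma 3.1]{BFM} and its $BD$ adaptation \cite[Lemma 3.10]{BFT}. By symmetry in the indices $1,2$ it suffices to prove the one-sided bound
$$m(\chi_1,u_1;V) \leq m(\chi_2,u_2;V) + C\int_{\partial V}\bigl(|{\rm tr}\,\chi_1 - {\rm tr}\,\chi_2| + |{\rm tr}\,u_1 - {\rm tr}\,u_2|\bigr)\, d\cH^{N-1}.$$
First I would fix $\eta>0$ and choose, by definition \eqref{m}, a competitor $(\theta_2,v_2)$ with $\theta_2 = \chi_2$, $v_2 = u_2$ on $\partial V$ and $\mathcal F(\theta_2,v_2;V)\leq m(\chi_2,u_2;V)+\eta$. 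The idea is then to modify $(\theta_2,v_2)$ in a thin collar near $\partial V$ so that the new pair attains the boundary traces $\chi_1,u_1$ instead; this gives an admissible competitor for $m(\chi_1,u_1;V)$, and one must control the extra energy the modification creates.

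The key step is the construction of this transition. I would pick, for small $\delta>0$, cut-off functions and a diffeomorphism (or simply the nearest-point projection / dilation) that squeezes $V$ slightly and glues $(\theta_2,v_2)$ on an inner region $V_{\delta}\subset\subset V$ to the boundary data $(\chi_1,u_1)$ on the collar $V\setminus V_{\delta}$. The bulk contribution of the collar is estimated by the growth condition \eqref{growth} from above together with the $BD$ trace theorem and Gauss--Green formula \eqref{GGBD}: the symmetrised gradient of the glued field picks up a term of order $\delta^{-1}|{\rm tr}\,u_1 - {\rm tr}\,u_2|$ on a set of measure $O(\delta)$, hence an $O(1)$ contribution proportional to $\int_{\partial V}|{\rm tr}\,u_1 - {\rm tr}\,u_2|\,d\cH^{N-1}$ in the limit $\delta\to 0^+$, plus a term $O(\delta)$ coming from $|Eu_1|+|Eu_2|$ near $\partial V$ which vanishes. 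Likewise the perimeter term $|D\theta|$ of the glued characteristic function gains, across the two gluing interfaces, a contribution bounded by $C\int_{\partial V}|{\rm tr}\,\chi_1 - {\rm tr}\,\chi_2|\,d\cH^{N-1}$, using that $\chi_1,\chi_2$ take values in $\{0,1\}$ so that the jump of the glued function is exactly $|{\rm tr}\,\chi_1 - {\rm tr}\,\chi_2|\in\{0,1\}$ $\cH^{N-1}$-a.e. on the gluing surface. One should also observe that the diffeomorphism distorts the inner energy $\mathcal F(\theta_2,v_2;V_\delta)$ by a factor $1+O(\delta)$, which is harmless. Using the lower semicontinuity/subadditivity properties of $\mathcal F$ from Proposition~\ref{firstprop} and the fact that $\mathcal F(\theta,v;\cdot)$ is a Radon measure (Proposition~\ref{traceRm}), one then lets $\delta\to 0^+$ and $\eta\to 0^+$ to conclude.

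The main obstacle will be carrying out the collar construction cleanly in the $BD$ setting: unlike $W^{1,1}$, a $BD$ field cannot be modified by arbitrary smooth interpolation without creating an uncontrolled singular part, so the gluing must be done along a level surface $\partial V_\rho$ chosen (as in the proof of Proposition~\ref{nestedsa}) so that $|Eu_i|(\partial V_\rho)=0$ and $|D\chi_i|(\partial V_\rho)=0$, and the jump created on $\partial V_\rho$ must be estimated via the traces of $v_2$ and $u_i$ from the two sides. One must also ensure the modified $\theta$ remains $\{0,1\}$-valued, which is automatic since we only ever glue $\theta_2$ to $\chi_1$ (both characteristic functions) and never form convex combinations. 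Once the trace estimates from Subsection~\ref{BD} are invoked, the remaining computations are the routine change-of-variables and growth-bound estimates sketched above.
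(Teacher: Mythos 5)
Your proposal is correct and follows essentially the same route as the paper: the paper's proof takes a near-optimal competitor $(\theta,v)$ for $m(\chi_2,u_2;V)$, replaces it by $(\chi_1,u_1)$ outside $V_\delta=\{x\in V:\ \mathrm{dist}(x,\partial V)>\delta\}$ (the sharp gluing you settle on in your final paragraph, with no diffeomorphism or cut-off interpolation needed since the competitor is already defined on all of $\Omega$), and concludes via the additivity/locality of $\mathcal F$, the upper bound in Proposition~\ref{firstprop}~$i)$, and the convergence of the interfacial trace terms to $\int_{\partial V}\bigl(|{\rm tr}\,\chi_1-{\rm tr}\,\chi_2|+|{\rm tr}\,u_1-{\rm tr}\,u_2|\bigr)\,d\cH^{N-1}$ as $\delta\to0^+$. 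The squeezing diffeomorphism and the $\delta^{-1}$-layer interpolation in your first sketch are superfluous, but your argument as finally stated matches the paper's.
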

\begin{proof}
The proof follows that of Lemma 3.10 in \cite{BFT}. Given $\delta > 0$ let
$V_\delta := \{x \in V : {\rm dist}(x,\partial V) > \delta\}$ and select
$\theta \in BV(\O;\{0,1\})$ and $v \in BD(\O)$ such that $\theta = \chi_2$ and
$v = u_2$ on $\partial V$. Now define $\theta_\delta \in BV(\O;\{0,1\})$ and 
$v_\delta \in BD(\O)$ by
$$\theta_\delta : = \begin{cases}
\theta, & {\rm in} \; V_\delta\\
\chi_1, & {\rm in} \; \O \setminus V_\delta
\end{cases} \; \; \; {\rm and} \; \; \; 
v_\delta : = \begin{cases}
v, & {\rm in} \; V_\delta\\
u_1, & {\rm in} \; \O \setminus V_\delta.
\end{cases}
$$
The definition of $m(\cdot,\cdot;\cdot)$ and the additivity and locality of 
$\mathcal F(\cdot,\cdot;\cdot)$, as well as the inequality from above in Proposition
\ref{firstprop} $i)$, lead to the conclusion.
\end{proof}

Fixing $\chi \in BV(\O;\{0,1\})$, $u \in BD(\O)$ and 
$\nu \in S^{N-1}$, we define $\lambda := \LL^N + |E^s u| + |D\chi|$ and, following \cite{BFM}, we let
$$\mathcal O^*:= \left\{Q_\nu(x,\e) : x \in \O,\,  \e > 0\right\}$$
and, for $\delta > 0$ and $V \in \mathcal O(\O)$, set
\begin{align*}
m^\delta(\chi,u;V) &:= \inf\Big\{\sum_{i = 1}^{+\infty}m(\chi,u;Q_i) : 
Q_i \in \mathcal O^*, Q_i \cap Q_j = \emptyset \; {\rm if} \; i \neq j,\\
& \hspace{4cm} Q_i \subset V, {\rm diam} \, Q_i < \delta, 
\lambda\left(V \setminus \displaystyle\bigcup_{i=1}^{+\infty}Q_i\right) = 0\Big\}.
\end{align*}
Clearly, $\delta \mapsto m^\delta(\chi,u;V)$ is a decreasing function, so we define
$$m^*(\chi,u;V) := \sup \left\{m^\delta(\chi,u;V) : \delta > 0\right\}
= \lim_{\delta \to 0^+}m^\delta(\chi,u;V).$$

\begin{lemma}\label{BFT311}
Let $f$ be given by \eqref{density}, where $W_0$ and $W_1$ are continuous functions satisfying \eqref{growth}.
Given  $\chi \in BV(\O;\{0,1\})$, $u \in BD(\O)$, we have
$$\mathcal F(\chi,u;V) = m^*(\chi,u;V), \; \mbox{ for every } V \in \mathcal O(\O).$$
\end{lemma}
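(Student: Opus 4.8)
The plan is to establish the two inequalities $\mathcal F(\chi,u;V) \le m^*(\chi,u;V)$ and $m^*(\chi,u;V) \le \mathcal F(\chi,u;V)$ separately, following the scheme of the global method for relaxation as in \cite[Lemma 3.1]{BFM} and its $BD$ adaptation \cite[Lemma 3.11]{BFT}. Throughout I would use freely that $\mathcal F(\chi,u;\cdot)$ is (the restriction to $\mathcal O(\O)$ of) a Radon measure $\mu = \mu(\chi,u)$ by Proposition \ref{traceRm}, that it satisfies the growth bounds of Proposition \ref{firstprop} $i)$, and that $m(\chi,u;\cdot) \le \mathcal F(\chi,u;\cdot)$ on $\mathcal O_\infty(\O)$ directly from the definition \eqref{m} (taking $\theta = \chi$, $v = u$ as competitors).

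For the inequality $m^*(\chi,u;V) \le \mathcal F(\chi,u;V)$: fix $\delta > 0$ and an arbitrary countable disjoint family $\{Q_i\} \subset \mathcal O^*$ with $Q_i \subset V$, $\operatorname{diam} Q_i < \delta$ and $\lambda(V \setminus \bigcup_i Q_i) = 0$ — such families exist by a Vitali–Besicovitch covering argument applied to the finite measure $\lambda = \mathcal L^N + |E^su| + |D\chi|$, since the cubes $Q_\nu(x,\e)$ form a fine cover. For each $i$, $m(\chi,u;Q_i) \le \mathcal F(\chi,u;Q_i)$, and since $\mu$ is a measure and $\mu(V \setminus \bigcup Q_i) = 0$ (because $\mu \ll \lambda$ by Proposition \ref{firstprop} $i)$ together with the decomposition $\mu = \mu^a\mathcal L^N + \mu^j + \mu^c$), we get $\sum_i m(\chi,u;Q_i) \le \sum_i \mathcal F(\chi,u;Q_i) = \mu(\bigcup_i Q_i) = \mu(V) = \mathcal F(\chi,u;V)$. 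Taking the infimum over admissible families and then the supremum over $\delta$ yields $m^* \le \mathcal F$.

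For the reverse inequality $\mathcal F(\chi,u;V) \le m^*(\chi,u;V)$: the idea is to show that $V \mapsto m^\delta(\chi,u;V)$ is a superadditive set function bounded below by a genuine content, then pass to the limit $\delta \to 0^+$. Given a disjoint family $\{Q_i\}$ nearly exhausting $V$ for $\lambda$, for each $i$ pick $(\theta_i, v_i)$ with $\theta_i = \chi$, $v_i = u$ on $\partial Q_i$ and $\mathcal F(\theta_i,v_i;Q_i) \le m(\chi,u;Q_i) + \eta 2^{-i}$; glue them into a global pair $(\theta,v)$ on $V$ (which matches $(\chi,u)$ across all the $\partial Q_i$, so no spurious interfaces are created and $(\theta,v) \to (\chi,u)$ in the relevant $L^1$ sense as $\delta \to 0$, using the uniform bounds from Proposition \ref{firstprop} $i)$ to control $|E v|$ and $|D\theta|$). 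By the measure property and locality of $\mathcal F$, $\mathcal F(\chi,u;V) \le \liminf_\delta \mathcal F(\theta_\delta, v_\delta; V) \le \liminf_\delta \sum_i \mathcal F(\theta_i,v_i;Q_i) \le \liminf_\delta \sum_i m(\chi,u;Q_i) + \eta$; taking the infimum over families gives $\mathcal F(\chi,u;V) \le m^\delta(\chi,u;V) + \eta \le m^*(\chi,u;V) + \eta$, and $\eta \to 0^+$ concludes. The diagonalisation to recover $(\theta_\delta,v_\delta) \to (\chi,u)$ and the lower semicontinuity step $\mathcal F(\chi,u;V) \le \liminf \mathcal F(\theta_\delta,v_\delta;V)$ rely on Proposition \ref{firstprop} $iii)$.

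The main obstacle I expect is the gluing/compactness step in the second inequality: one must assemble countably many local competitors into one pair that still converges to $(\chi,u)$ as $\delta \to 0^+$ while keeping the total energies summable, and must ensure the boundary-matching on each $\partial Q_i$ does not generate extra jump contributions in $|D\theta|$ or in $E v$ — this is exactly the delicate point in \cite{BFM,BFT} and is handled via the trace-preserving extensions (Theorem \ref{densitysmooth}) and the quantitative estimates of Proposition \ref{firstprop} $i)$, which bound $\sum_i (\mathcal L^N(Q_i) + |Eu|(Q_i) + |D\chi|(Q_i)) = \mathcal L^N(V) + |Eu|(V) + |D\chi|(V) < +\infty$ and thereby make the diagonal limit legitimate.
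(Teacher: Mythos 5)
Your proposal is correct and follows essentially the same route as the paper: the inequality $m^*\le\mathcal F$ comes directly from $m\le\mathcal F$ and the measure property, while the reverse inequality is obtained by choosing near-optimal covers and local competitors, gluing them (with matching traces so that no energy is created on the $\lambda$-null remainder, the point the paper defers to the computations of \cite[Lemma 3.11]{BFT}), and invoking the lower semicontinuity of $\mathcal F(\cdot,\cdot;V)$ from Proposition \ref{firstprop}. The only cosmetic differences are your error terms $\eta 2^{-i}$ in place of the paper's $\delta\,\mathcal L^N(Q_i^\delta)$ and the (unneeded) use of $\mu\ll\lambda$ in the first direction, where $\mu(\bigcup_i Q_i)\le\mu(V)$ already suffices.
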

\begin{proof}
The inequality 
$$m^*(\chi,u;V) \leq \mathcal F(\chi,u;V)$$ is an immediate consequence
of the fact that $m(\chi,u;Q_i) \leq \mathcal F(\chi,u;Q_i)$ and that 
$\mathcal F(\chi,u;\cdot)$ is a Radon measure.

The proof of the reverse inequality relies on the lower semicontinuity of 
$\mathcal F(\cdot,\cdot;V)$ obtained in 
Proposition \ref{firstprop} $iv)$ and on the definitions of $m^\delta(\chi,u;V)$, $m(\chi,u;V)$ and $m^*(\chi,u;V)$. Indeed, fixing $\delta > 0$, we consider $(Q_i^\delta)$ an admissible family for $m^\delta(\chi,u;V)$ such that,
letting $\displaystyle N^\delta := V \setminus \displaystyle\cup_{i=1}^{+\infty}Q_i^\delta$,
$$\sum_{i =1}^{+\infty}m(\chi,u;Q_i^\delta) < m^\delta(\chi,u;V) + \delta 
\; \mbox{ and } \; \lambda(N^\delta) = 0,$$
and we now let $\theta_i^\delta \in BV(\O;\{0,1\})$ and $v_i^\delta \in BD(\O)$
be such that $\theta_i^\delta = \chi$ on $\partial Q_i^\delta$, $v_i^\delta = u$ on
$\partial Q_i^\delta$ and
$$\mathcal F(\theta_i^\delta,v_i^\delta;Q_i^\delta) \leq 
m(\chi,u;Q_i^\delta) + \delta \LL^N(Q_i^\delta).$$
Setting 
$\displaystyle N_0^\delta := \O\setminus\displaystyle\cup_{i=1}^{+\infty}Q_i^\delta$,
we define 
$$\theta^\delta := \sum_{i =1}^{+\infty}\theta_i^\delta \,\chi_{Q_i^\delta} 
+ \chi \,\chi_{N_0^\delta} \;\;\; \mbox{ and } \; \;\;
v^\delta := \sum_{i =1}^{+\infty}v_i^\delta \,\chi_{Q_i^\delta} 
+ u \,\chi_{N_0^\delta}.$$
Following the computations in the proof of \cite[Lemma 3.11]{BFT}, we may show that $\theta^\delta \in BV(\O;\{0,1\})$, $v^\delta \in BD(\O)$, 
$\theta^\delta \to \chi$  in $L^1(V;\{0,1\})$ and
$v^\delta \to u$ in $L^1(V;\Rb^N)$, as $\delta \to 0^+$, and also
$$\mathcal F(\theta^\delta,v^\delta;N^\delta) \leq C \lambda(N^\delta) = 0.$$ 
Using the additivity of $\mathcal F(\theta^\delta,v^\delta;\cdot)$ we have
\begin{align*}
\mathcal F(\theta^\delta,v^\delta;V) &= \sum_{i =1}^{+\infty}
\mathcal F(\theta_i^\delta,v_i^\delta;Q_i^\delta) + 
\mathcal F(\theta^\delta,v^\delta;N^\delta)\\
&\leq \sum_{i =1}^{+\infty}m(\chi,u;Q_i^\delta) + \delta \LL^N(V)
\leq m^\delta(\chi,u;V) + \delta + \delta \LL^N(V),
\end{align*}
so that the lower semicontinuity of $\mathcal F(\cdot,\cdot;V)$ yields
\begin{align*}
\mathcal F(\chi,u;V) &\leq \liminf_{\delta \to 0^+}
\mathcal F(\theta^\delta,v^\delta;V) \\
&\leq \liminf_{\delta \to 0^+}
\left( m^\delta(\chi,u;V) + \delta + \delta \LL^N(V)\right) = m^*(\chi,u;V)
\end{align*}
and this completes the proof.
\end{proof}

Finally, a straightforward adaptation of \cite[Lemma 3.5]{BFM} leads to the following
result.

\begin{lemma}\label{BFT312}
Let $f$ be given by \eqref{density}, where $W_0$ and $W_1$ are continuous functions satisfying \eqref{growth}.
Given  $\chi \in BV(\O;\{0,1\})$, $u \in BD(\O)$, we have
$$\lim_{\e \to 0^+}\frac{\mathcal F(\chi,u;Q_\nu(x_0,\e))}{\lambda(Q_\nu(x_0,\e))} = \lim_{\e \to 0^+}\frac{m(\chi,u;Q_\nu(x_0,\e))}{\lambda(Q_\nu(x_0,\e))},$$
for $\lambda$ a.e. $x_0 \in \O$ and for every $\nu \in S^{N-1}$.
\end{lemma}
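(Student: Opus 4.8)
The plan is to follow the scheme of [Lemma 3.5]{BFM} adapted to the pair $(\chi,u)$, the two essential inputs being Lemma \ref{BFT311} (the identity $\mathcal{F}(\chi,u;\cdot)=m^{*}(\chi,u;\cdot)$) and Lemma \ref{BFT310} (continuity of $m$ in the boundary data, which in particular makes $x_{0}\mapsto m(\chi,u;Q_{\nu}(x_{0},\e))$ Borel, hence $\overline{g},\underline{g}$ below $\lambda$-measurable). Write $\mu:=\mathcal{F}(\chi,u;\cdot)$, a Radon measure by Proposition \ref{traceRm}; from $\mu=\mu^{a}\mathcal{L}^{N}+\mu^{j}+\mu^{c}$, with $\mu^{j}\ll|E^{j}u|+|D\chi|$, $\mu^{c}\ll|E^{c}u|$, and since $\lambda=\mathcal{L}^{N}+|E^{j}u|+|E^{c}u|+|D\chi|$, one has $\mu\ll\lambda$; thus $\mu(A)=\int_{A}\frac{d\mu}{d\lambda}\,d\lambda$ for every Borel $A$, and by the Besicovitch differentiation theorem $\frac{d\mu}{d\lambda}(x_{0})=\lim_{\e\to0^{+}}\mu(Q_{\nu}(x_{0},\e))/\lambda(Q_{\nu}(x_{0},\e))$ for $\lambda$-a.e.\ $x_{0}$ (the fixed $\nu$ being admissible there), so the left-hand limit in the statement exists $\lambda$-a.e. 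Since moreover $(\chi,u)$ is admissible in \eqref{m}, $m(\chi,u;Q_{\nu}(x_{0},\e))\le\mathcal{F}(\chi,u;Q_{\nu}(x_{0},\e))=\mu(Q_{\nu}(x_{0},\e))$, hence, setting
\[
\overline{g}(x_{0}):=\limsup_{\e\to0^{+}}\frac{m(\chi,u;Q_{\nu}(x_{0},\e))}{\lambda(Q_{\nu}(x_{0},\e))},\qquad\underline{g}(x_{0}):=\liminf_{\e\to0^{+}}\frac{m(\chi,u;Q_{\nu}(x_{0},\e))}{\lambda(Q_{\nu}(x_{0},\e))},
\]
we get $0\le\underline{g}\le\overline{g}\le\frac{d\mu}{d\lambda}$ $\lambda$-a.e.; in particular $\underline{g},\overline{g}\in L^{1}(\lambda)$ and $\int_{A}\overline{g}\,d\lambda\le\mu(A)$ for every Borel $A$.

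The core is the reverse inequality $\mu(A)\le\int_{A}\underline{g}\,d\lambda$ for open $A\subset\subset\Omega$. Fix such $A$ and $\eta,\delta>0$. For $\lambda$-a.e.\ $x_{0}\in A$ one can pick a sequence $\e_{k}\downarrow0$ along which simultaneously $Q_{\nu}(x_{0},\e_{k})\subset A$, $\lambda(\partial Q_{\nu}(x_{0},\e_{k}))=0$, $m(\chi,u;Q_{\nu}(x_{0},\e_{k}))\le(\underline{g}(x_{0})+\eta)\lambda(Q_{\nu}(x_{0},\e_{k}))$, and $\frac{1}{\lambda(Q_{\nu}(x_{0},\e_{k}))}\int_{Q_{\nu}(x_{0},\e_{k})}|\underline{g}(x)-\underline{g}(x_{0})|\,d\lambda(x)<\eta$ ($x_{0}$ a $\lambda$-Lebesgue point of $\underline{g}$). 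The cubes so obtained with ${\rm diam}\,Q_{\nu}(x_{0},\e_{k})<\delta$ form a fine cover of $A$, so the Vitali covering theorem for the Radon measure $\lambda$ gives a countable disjoint subfamily $\{Q_{i}=Q_{\nu}(x_{i},\e_{i})\}\subset\mathcal{O}^{*}$ with ${\rm diam}\,Q_{i}<\delta$, $Q_{i}\subset A$, $\lambda(\partial Q_{i})=0$, $\lambda(A\setminus\bigcup_{i}Q_{i})=0$. This family is admissible in the definition of $m^{\delta}(\chi,u;A)$, whence, using $\underline{g}\ge0$ and the Lebesgue-point choices,
\[
m^{\delta}(\chi,u;A)\le\sum_{i}m(\chi,u;Q_{i})\le\sum_{i}\bigl(\underline{g}(x_{i})+\eta\bigr)\lambda(Q_{i})\le\sum_{i}\int_{Q_{i}}\underline{g}\,d\lambda+2\eta\,\lambda(A)\le\int_{A}\underline{g}\,d\lambda+2\eta\,\lambda(A).
\]
Letting $\delta\to0^{+}$ and invoking Lemma \ref{BFT311}, $\mu(A)=m^{*}(\chi,u;A)=\lim_{\delta\to0^{+}}m^{\delta}(\chi,u;A)\le\int_{A}\underline{g}\,d\lambda+2\eta\,\lambda(A)$; letting $\eta\to0^{+}$ yields $\mu(A)\le\int_{A}\underline{g}\,d\lambda$, which passes to all open, then all Borel, $A$ by inner/outer regularity.

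Combining the two inequalities gives $\int_{A}\overline{g}\,d\lambda\le\mu(A)\le\int_{A}\underline{g}\,d\lambda\le\int_{A}\overline{g}\,d\lambda$ for every Borel $A$, so $\overline{g}=\underline{g}=\frac{d\mu}{d\lambda}$ $\lambda$-a.e.; consequently $\lim_{\e\to0^{+}}m(\chi,u;Q_{\nu}(x_{0},\e))/\lambda(Q_{\nu}(x_{0},\e))$ exists and equals $\frac{d\mu}{d\lambda}(x_{0})=\lim_{\e\to0^{+}}\mathcal{F}(\chi,u;Q_{\nu}(x_{0},\e))/\lambda(Q_{\nu}(x_{0},\e))$ for $\lambda$-a.e.\ $x_{0}$ and every $\nu\in S^{N-1}$, which is the assertion. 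I expect the reverse inequality to be the main obstacle: passing from the single cube $Q_{\nu}(x_{0},\e)$ to the fine disjoint covering underlying $m^{\delta}$ forces one to control the boundary data on the newly created interfaces, and it is precisely the identity $\mathcal{F}=m^{*}$ of Lemma \ref{BFT311} — whose $BD$ proof already absorbs the trace-gluing difficulties by means of Lemma \ref{BFT310} — together with the choice of covering cubes having $\lambda$-negligible boundary, that makes this step go through.
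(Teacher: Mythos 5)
Your proposal is correct and follows essentially the paper's own route: the paper proves this lemma only by invoking "a straightforward adaptation of \cite[Lemma 3.5]{BFM}", and that is precisely the scheme you reconstruct (the inequality $m\leq\mathcal{F}$ plus Besicovitch differentiation of $\mathcal{F}(\chi,u;\cdot)$ with respect to $\lambda$ for one direction, and a Vitali--Besicovitch fine covering combined with the definition of $m^{\delta}$ and the identity $\mathcal{F}=m^{*}$ of Lemma \ref{BFT311} for the reverse). The only points you leave implicit --- the $\lambda$-measurability of $\underline{g},\overline{g}$ and the selection of radii that simultaneously realise the liminf bound and have $\lambda$-negligible boundary (fixable by slightly enlarging a good cube and gluing $(\chi,u)$ on the thin frame, which creates no extra interfacial energy since the traces match, in the spirit of Lemma \ref{BFT310}) --- are exactly the technicalities the cited BFM argument absorbs.
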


We now proceed with the proof of Proposition \ref{mg}.

\begin{proof}[Proof of Proposition \ref{mg}]
In the sequel, for simplicity of notation, we will write $\nu = \nu(x_0)$.

Let $x_0 \in \O \cap (J_\chi \cup J_u)$ be a point satisfying 
\begin{equation}\label{chitilde}
\lim_{\varepsilon \rightarrow 0^+}\frac{1}{\e^N}
\int_{Q_{\nu}(x_0,\e)}|\chi(x) - \widetilde{\chi}(x_0)| \, dx = 0, 
\; {\rm if} \;x_0 \in \Omega \setminus J_{\chi},
\end{equation}
\begin{equation}\label{chipm}
\lim_{\varepsilon \rightarrow 0^+}\frac{1}{\e^N}
\int_{Q^+_{\nu}(x_0,\e)}\hspace{-0,2cm}|\chi(x) - {\chi}^+(x_0)| \, dx = 
\lim_{\varepsilon \rightarrow 0^+}\frac{1}{\e^N}
\int_{Q^-_{\nu}(x_0,\e)}\hspace{-0,2cm}|\chi(x) - {\chi}^-(x_0)| \, dx =0, 
\; {\rm if} \;x_0 \in \Omega \cap J_{\chi},
\end{equation}
\begin{equation}\label{utilde}
\lim_{\varepsilon \rightarrow 0^+}\frac{1}{\e^N}
\int_{Q_{\nu}(x_0,\e)}|u(x) - \widetilde{u}(x_0)| \, dx = 0, 
\; {\rm if} \;x_0 \in \Omega \setminus J_{u},
\end{equation}
\begin{equation}\label{upm}
\lim_{\varepsilon \rightarrow 0^+}\frac{1}{\e^N}
\int_{Q^+_{\nu}(x_0,\e)}\hspace{-0,2cm}|u(x) - {u}^+(x_0)| \, dx = 
\lim_{\varepsilon \rightarrow 0^+}\frac{1}{\e^N}
\int_{Q^-_{\nu}(x_0,\e)}\hspace{-0,2cm}|u(x) - {u}^-(x_0)| \, dx =0, 
\; {\rm if} \;x_0 \in \Omega \cap J_{u},
\end{equation}
where 
$$Q^{\pm}_{\nu}(x_0,\e) = 
\left\{x \in Q_{\nu}(x_0,\e) : (x - x_0) \cdot (\pm \nu) > 0\right\},$$
and
\begin{equation}\label{muj}
\mu^j(x_0) = \lim_{\varepsilon \rightarrow 0^+}
\frac{\mathcal F(\chi,u;Q_{\nu}(x_0,\e))}
{\mathcal H^{N-1} \lfloor (J_\chi \cup J_u)(Q_{\nu}(x_0,\e))} =
\lim_{\varepsilon \rightarrow 0^+}\frac{1}{\e^{N-1}}\int_{Q_{\nu}(x_0,\e)}d\mu(x) \;
\mbox{ exists and is finite}.
\end{equation}
In view of the considerations made at the beginning of this subsection, these properties hold for $\mathcal H^{N-1}$ a.e. 
$x_0 \in \Omega\cap (J_\chi \cup J_u)$.
Furthermore, we require that $x_0$ also satisfies
\begin{equation}\label{symjump}
\lim_{\e \to 0^+}\frac{1}{\e^{N-1}}|Eu|(Q_{\nu}(x_0,\e)) = 
|([u]\odot \nu)(x_0)| = |Eu_0|(Q_\nu)
\end{equation}
and
\begin{equation}\label{perchi0}
\lim_{\e \to 0^+}\frac{1}{\e^{N-1}}|D\chi|(Q_{\nu}(x_0,\e)) = 1 = |D\chi_0|(Q_\nu),
\end{equation}
where we are denoting by $\chi_0$ and $u_0$ the functions given by 
\eqref{targets} with $\nu = \nu(x_0)$ and $a=\chi^+(x_0)$, 
$b=\chi^-(x_0)$, $c=u^+(x_0)$ and  $d=u^-(x_0)$.
Letting $\sigma := \cH^{N-1}\lfloor (J_\chi \cup J_u)$, by Lemma \ref{BFT312}
it follows that, for $\sigma$ a.e. $x_0 \in \O$,
\begin{equation}\label{msigma}
\frac{d\mathcal F(\chi,u;\cdot)}{d \sigma}(x_0) =
\lim_{\varepsilon \to 0^+}
\frac{\mathcal F(\chi,u;Q_\nu(x_0,\e))}{\sigma(Q_\nu(x_0,\e))} =
\lim_{\varepsilon \to 0^+}
\frac{m(\chi,u;Q_\nu(x_0,\e))}{\sigma(Q_\nu(x_0,\e))}.
\end{equation}
Let $\chi_{\e}: Q_\nu \to \{0,1\}$ and $u_\e: Q_\nu \to \Rb^N$ be defined by
$\chi_{\e}(y) := \chi(x_0 + \e y)$, $u_\e(y) :=u(x_0 + \e y)$. Properties \eqref{chitilde} or \eqref{chipm}, and \eqref{utilde} or \eqref{upm}, respectively,
guarantee that $\chi_{\e} \to \chi_0$ in $L^1(Q_\nu;\{0,1\})$ and
$u_\e \to u_0$ in $L^1(Q_\nu;\Rb^N)$. On the other hand, by \eqref{symjump} and
\eqref{perchi0} we have
$$\lim_{\e \to 0^+}|Eu_\e|(Q_\nu) = 
\lim_{\e \to 0^+}\frac{1}{\e^{N-1}}|Eu|(Q_\nu(x_0,\e)) 
= |([u]\odot \nu)(x_0)| = |Eu_0|(Q_\nu)$$
and
$$\lim_{\e \to 0^+}|D\chi_\e|(Q_\nu) = 
\lim_{\e \to 0^+}\frac{1}{\e^{N-1}}|D\chi|(Q_\nu(x_0,\e))  = |D\chi_0|(Q_\nu).$$
Due to the continuity of the trace operator with respect to the intermediate topology
we conclude that
\begin{align}\label{trcont}
& \lim_{\e \to 0^+}\frac{1}{\e^{N-1}}\int_{\partial Q_{\nu}(x_0,\e)}
|{\rm tr } \, \chi(x) - {\rm tr}\,\chi_0(x-x_0)| +  
|{\rm tr}\,u(x) - {\rm tr}\,u_0(x-x_0)| \, d\cH^{N-1}(x) \nonumber \\
& = \lim_{\e \to 0^+}\int_{\partial Q_{\nu}}
|{\rm tr } \, \chi_\e(y) - {\rm tr}\,\chi_0(y)| +  
|{\rm tr}\,u_\e(y) - {\rm tr}\,u_0(y)| \, d\cH^{N-1}(y) = 0.
\end{align}
Hence, from \eqref{msigma}, \eqref{muj}, Lemma \ref{BFT310} and \eqref{trcont}, we obtain
\begin{align*}
&\frac{d\mathcal F(\chi,u;\cdot)}{d \sigma}(x_0) = 
\lim_{\varepsilon \to 0^+}
\frac{m(\chi,u;Q_\nu(x_0,\e))}{\sigma(Q_\nu(x_0,\e))} \\
& = \lim_{\varepsilon \to 0^+}\frac{m(\chi,u;Q_\nu(x_0,\e))- 
m(\chi_0(\cdot - x_0),u_0(\cdot - x_0);Q_\nu(x_0,\e)) 
+ m(\chi_0(\cdot - x_0),u_0(\cdot - x_0);Q_\nu(x_0,\e))}{\e^{N-1}}\\
& = \lim_{\varepsilon \to 0^+}
\frac{m(\chi_0(\cdot - x_0),u_0(\cdot - x_0);Q_\nu(x_0,\e))}{\e^{N-1}}
\end{align*}
and, therefore,
\begin{align*}
\mathcal{F}( \chi, u; V \cap (J_\chi \cup J_u)) &=
\int_{V \cap (J_\chi \cup J_u)} 
\frac{d\mathcal F(\chi,u;\cdot)}{d \sigma}(x) \, d\sigma(x) \\
&=\int_{V \cap (J_\chi \cup J_u)} 
g(x,\chi^+(x),\chi^-(x),u^+(x),u^-(x),\nu(x))\, d\cH^{N-1}(x).
\end{align*}
\end{proof}

In the final two propositions we will show that, under assumption \eqref{finfty}, the surface energy density $g(x_0,a,b,c,d,\nu)$
may be more explicitly characterised. For this purpose we need an additional lemma
which states that more regular functions can be considered in the definition of the
Dirichlet functional
$m(\chi,u;V)$ in \eqref{m}. In what follows, for
$u \in BD(\O)$, $\chi \in BV(\O;\{0,1\})$ and $V \in \mathcal O_{\infty}(\O)$
we define
$$m_0(\chi,u;V) := \inf\left\{F (\theta,v;V) : 
\theta \in  BV(\O;\{0,1\}), v \in W^{1,1}(\O;\Rb^N), \theta = \chi \mbox{ on } \partial V,
v = u \mbox{ on } \partial V\right\}.$$

\begin{lemma}\label{BFT314}
Let $f$ be given by \eqref{density}, where $W_0$ and $W_1$ are continuous functions satisfying \eqref{growth}.
Given  $\chi \in BV(\O;\{0,1\})$, $u \in BD(\O)$, we have
$$m(\chi,u;V) = m_0(\chi,u;V), \; \mbox{ for every } V \in \mathcal O_{\infty}(\O).$$
\end{lemma}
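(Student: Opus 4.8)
The plan is to prove the double inequality $m(\chi,u;V) \le m_0(\chi,u;V)$ and $m_0(\chi,u;V) \le m(\chi,u;V)$. The first inequality is immediate: any competitor $(\theta,v)$ for $m_0(\chi,u;V)$, with $v \in W^{1,1}(\O;\Rb^N) \subset BD(\O)$ and the prescribed boundary values, is in particular a competitor for $m(\chi,u;V)$, and since $F(\theta,v;V) \ge \mathcal F(\theta,v;V)$ by the very definition \eqref{calF} of the relaxed functional (taking the constant sequence), we get $\mathcal F(\theta,v;V) \le F(\theta,v;V)$, hence $m(\chi,u;V) \le m_0(\chi,u;V)$.

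For the reverse inequality, fix $\eta > 0$ and pick a competitor $(\theta,v)$ for $m(\chi,u;V)$ with $\theta \in BV(\O;\{0,1\})$, $v \in BD(\O)$, $\theta = \chi$, $v = u$ on $\partial V$, and $\mathcal F(\theta,v;V) \le m(\chi,u;V) + \eta$. By Proposition~\ref{firstprop}~$ii)$ (applied on $V$), there exist sequences $\{v_n\} \subset W^{1,1}(V;\Rb^N)$, $\{\theta_n\} \subset BV(V;\{0,1\})$ with $v_n \to v$ in $L^1(V;\Rb^N)$, $\theta_n \to \theta$ in $L^1(V;\{0,1\})$ and $F(\theta_n,v_n;V) \to \mathcal F(\theta,v;V)$. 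These sequences need not match the boundary data of $v$ and $\theta$ on $\partial V$, so the core of the argument is a boundary-adjustment (fattening) step: I will use Proposition~\ref{newslicing} applied to the pairs $\{v_n\}$, $\{v\}$ (regularised near $\partial V$ as in Theorem~\ref{densitysmooth} so that a smooth approximation $\hat v$ of $v$ preserving its trace is available, or more simply applied directly with $v_n$ playing the role of $u_n$ and $v$ the role of $v_n$ in the statement) and $\{\theta_n\}$, $\{\theta\}$, to produce modified sequences $\{w_k\} \subset W^{1,1}(V;\Rb^N)$ and $\{\eta_k\} \subset BV(V;\{0,1\})$ which agree with $v$ and $\theta$ near $\partial V$ and satisfy $\limsup_k F(\eta_k, w_k; V) \le \liminf_n F(\theta_n, v_n; V) = \mathcal F(\theta,v;V)$. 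Since $v = u$ and $\theta = \chi$ on $\partial V$, each pair $(\eta_k, w_k)$ — extended by $(\chi,u)$ outside $V$, which is legitimate because the traces match — is admissible for $m_0(\chi,u;V)$, giving $m_0(\chi,u;V) \le F(\eta_k, w_k; V)$ for each $k$, hence $m_0(\chi,u;V) \le \mathcal F(\theta,v;V) \le m(\chi,u;V) + \eta$; letting $\eta \to 0^+$ concludes.

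One technical point deserves care: Proposition~\ref{newslicing} requires the hypothesis $\sup_n |E^s v_n|(V) < +\infty$, which is automatic here since $v_n \in W^{1,1}$ gives $E^s v_n = 0$, together with $|Ev| \overset{\ast}{\rightharpoonup} \mu$ and $|Ev_n|(V) \to \mu(V)$ for the ``outer'' sequence; to arrange this I would first replace $v$ by a smooth approximation $\hat v \in W^{1,1}(\O;\Rb^N)$ with $\hat v = v$ on $\partial V$ and $|E\hat v|(V)$ close to $|Ev|(V)$ — available from Theorem~\ref{densitysmooth} — or alternatively observe that the slicing construction only uses convex combinations with cut-off functions and the convergence $v_n - \hat v \to 0$ in $L^1$. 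The uniform bounds $\sup_n |D\theta_n|(V) < +\infty$ and $\sup_n |D\theta|(V) < +\infty$ follow from $F(\theta_n,v_n;V)$ being bounded (it converges) together with the growth bound \eqref{growth}, and from $\theta \in BV$.

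The main obstacle is thus entirely the boundary-matching step — ensuring that the recovery sequences can be modified to carry the exact trace of $(\chi,u)$ on $\partial V$ without increasing the energy in the limit. This is precisely what the refined slicing Proposition~\ref{newslicing} is designed for (it produces sequences agreeing with the target near $\partial A$ while controlling both the bulk energy and the perimeter term, by inserting the transition in a thin shell where the measures $|Ev|$ and $|D\theta|$ put no mass and the $L^1$-errors are negligible), so once its hypotheses are verified the proof is a direct assembly. The only mild subtlety is that Proposition~\ref{newslicing} is stated for sequences in $BD(A)$, but as remarked just after it, when the input sequences lie in $W^{1,1}$ the output $\{w_k\}$ does too, since it is built from convex combinations using smooth cut-off functions; this is exactly what we need for admissibility in $m_0$.
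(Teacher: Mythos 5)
Your proposal is correct and follows essentially the same route as the paper: the trivial inequality, then for the reverse one a near-optimal competitor $(\theta,v)$ for $m$, recovery sequences from Proposition~\ref{firstprop}~$ii)$, the trace-preserving smooth approximation of $v$ from Theorem~\ref{densitysmooth}, and Proposition~\ref{newslicing} (together with the remark that the construction preserves $W^{1,1}$ regularity) to restore the boundary data $\theta=\chi$, $v=u$ on $\partial V$. Only drop the ``more simply'' variant in which $v$ itself plays the role of the outer sequence: there the modified functions coincide near $\partial V$ with the merely $BD$ field $v$ and are convex combinations involving it, so they need not belong to $W^{1,1}(\O;\Rb^N)$ and would not be admissible for $m_0$ (also, a single fixed regularisation $\hat v$ fails the hypothesis $u_n-v_n\to 0$ in $L^1$), whereas the trace-preserving sequence from Theorem~\ref{densitysmooth}, as in the paper and in your main route, works.
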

\begin{proof}
The inequality $m(\chi,u;V) \leq m_0(\chi,u;V)$ is clear since, given any
$\theta \in  BV(\O;\{0,1\})$ such that $\theta = \chi \mbox{ on } \partial V$ and any $v \in W^{1,1}(\O;\Rb^N)$ such that 
$v = u \mbox{ on } \partial V$, we have
$$m(\chi,u;V) \leq \mathcal F(\theta,v;V) \leq F(\theta,v;V).$$
To show the reverse inequality, we fix $\e > 0$ and let $\theta \in  BV(\O;\{0,1\}), 
v \in BD(\O)$ be such that 
$\theta = \chi \mbox{ on } \partial V, v = u \mbox{ on } \partial V$ and
$$m(\chi,u;V) + \e \geq \mathcal F(\theta,v;V).$$
By Proposition~\ref{firstprop} $ii)$, let $\chi_n \in  BV(\O;\{0,1\}), u_n \in W^{1,1}(\O;\Rb^N)$ satisfy $\chi_n \to \theta$ in $L^1(\O;\{0,1\})$, 
$u_n \to v$ in $L^1(\O;\Rb^N)$ and
$$\mathcal F(\theta,v;V) = \lim_{n \to + \infty}F(\chi_n,u_n;V).$$
Theorem~\ref{densitysmooth} ensures the existence of a sequence
$v_n \in W^{1,1}(\O;\Rb^N)$ such that $v_n \to v$ in $L^1(\O;\Rb^N)$,
$v_n = v = u \mbox{ on } \partial V$ and $|Ev_n|(V) \to |Ev|(V)$.
We now apply Proposition~\ref{newslicing} to conclude that there exists a subsequence
$\{v_{n_k}\}$ of $\{v_n\}$ and there exist sequences $w_k \in W^{1,1}(\O;\Rb^N)$,
$\eta_k \in BV(\O;\{0,1\})$ verifying $w_k = v_{n_k} = u$ on $\partial V$,
$\eta_k = \theta = \chi$ on $\partial V$ and
$$\limsup_{k \to +\infty}F(\eta_k,w_k;V) \leq 
\liminf_{n \to +\infty}F(\chi_n,u_n;V).$$
Therefore,
$$m_0(\chi,u;V) \leq \limsup_{k \to +\infty}F(\eta_k,w_k;V) \leq 
\liminf_{n \to +\infty}F(\chi_n,u_n;V) = \mathcal F(\theta,v;V) \leq 
m(\chi,u;V) + \e,$$
so the desired inequality follows by letting $\e \to 0^+.$
\end{proof}

\begin{proposition}
Let $f$ be given by \eqref{density}, where $W_0$ and $W_1$ are continuous functions satisfying \eqref{growth}. 
Assume that $f$ is symmetric quasiconvex and that \eqref{finfty} holds.
Given $u \in BD(\Omega)$ and $\chi \in BV(\Omega;\{0,1\})$,  
for $\mathcal H^{N-1}$ a.e. $x_0 \in \Omega \cap (J_\chi \cup J_u)$, we have
$$g(x_0,\chi^+(x_0),\chi^-(x_0),u^+(x_0),u^-(x_0),\nu(x_0)) \geq
K(\chi^+(x_0),\chi^-(x_0),u^+(x_0),u^-(x_0),\nu(x_0)),$$
where $\chi^+(x_0) = \chi^-(x_0) = \widetilde\chi(x_0)$ if 
$x_0 \in J_u \setminus J_\chi$ and $u^+(x_0) = u^-(x_0) = \widetilde u(x_0)$ if
$x_0 \in J_\chi \setminus J_u$, and $K$ is given by \eqref{K}.
\end{proposition}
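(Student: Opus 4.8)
The plan is to identify $g(x_0,\ldots)$ with the Radon--Nikod\'ym density of the measure $\mathcal F(\chi,u;\cdot)$ at $x_0$, then to produce, by a two-fold diagonalisation, a family of blow-ups of almost-minimisers of $\mathcal F(\chi,u;\cdot)$ that converges in $L^1(Q_\nu)$ to the target pair $(\chi_{a,b,\nu},u_{c,d,\nu})$, and finally to recognise this family as admissible in the sequential cell formula $\widetilde K$ of Proposition~\ref{Ktilde}. Throughout write $a=\chi^+(x_0)$, $b=\chi^-(x_0)$, $c=u^+(x_0)$, $d=u^-(x_0)$, $\nu=\nu(x_0)$, $\chi_0=\chi_{a,b,\nu}$, $u_0=u_{c,d,\nu}$ (interpreted via the conventions of the proposition when $x_0$ lies in only one jump set), and fix $x_0$ in the $\cH^{N-1}$-full subset of $\Omega\cap(J_\chi\cup J_u)$ at which \eqref{chitilde}--\eqref{upm}, \eqref{symjump}, \eqref{perchi0} hold, which is a density-$1$ point of $J_\chi\cup J_u$, and which lies in the full-measure set where Lemma~\ref{BFT312} applies. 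Arguing exactly as in the proof of Proposition~\ref{mg} (which uses only Lemmas~\ref{BFT310}, \ref{BFT312} and the estimate \eqref{trcont}, hence applies for $u\in BD(\Omega)$), the limit
\[
g(x_0,a,b,c,d,\nu)=\lim_{\e\to0^+}\frac{m(\chi_0(\cdot-x_0),u_0(\cdot-x_0);Q_\nu(x_0,\e))}{\e^{N-1}}=\lim_{\e\to0^+}\frac{\mathcal F(\chi,u;Q_\nu(x_0,\e))}{\e^{N-1}}
\]
exists, and Proposition~\ref{firstprop}\,$i)$ with \eqref{symjump}, \eqref{perchi0} gives $\mathcal F(\chi,u;Q_\nu(x_0,\e))\le C\e^{N-1}$ for all small $\e$.

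Next I would build the competitors. For each small $\e$ choose, via Proposition~\ref{firstprop}\,$ii)$, pairs $(\chi^\e_n,u^\e_n)\in BV(Q_\nu(x_0,\e);\{0,1\})\times W^{1,1}(Q_\nu(x_0,\e);\Rb^N)$ converging to $(\chi,u)$ in $L^1(Q_\nu(x_0,\e))$ with $F(\chi^\e_n,u^\e_n;Q_\nu(x_0,\e))\to\mathcal F(\chi,u;Q_\nu(x_0,\e))$, and pick $n(\e)$ so large that $\tilde\chi_\e:=\chi^\e_{n(\e)}$, $\tilde u_\e:=u^\e_{n(\e)}$ satisfy $\|\tilde\chi_\e-\chi\|_{L^1(Q_\nu(x_0,\e))}+\|\tilde u_\e-u\|_{L^1(Q_\nu(x_0,\e))}\le\e^{N+1}$ and $F(\tilde\chi_\e,\tilde u_\e;Q_\nu(x_0,\e))\le\mathcal F(\chi,u;Q_\nu(x_0,\e))+\e^N$. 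Define the rescalings $\hat\chi_\e(y):=\tilde\chi_\e(x_0+\e y)$, $\hat u_\e(y):=\tilde u_\e(x_0+\e y)$ on $Q_\nu$. The threshold $\e^{N+1}$ forces $\|\hat\chi_\e-\chi(x_0+\e\,\cdot)\|_{L^1(Q_\nu)}\le\e$ and likewise for $\hat u_\e$, so by \eqref{chitilde}--\eqref{upm} one has $\hat\chi_\e\to\chi_0$ in $L^1(Q_\nu;\{0,1\})$ and $\hat u_\e\to u_0$ in $L^1(Q_\nu;\Rb^N)$. Since $\mathcal E\hat u_\e(y)=\e\,(\mathcal E\tilde u_\e)(x_0+\e y)$, a change of variables gives
\[
\frac{F(\tilde\chi_\e,\tilde u_\e;Q_\nu(x_0,\e))}{\e^{N-1}}=\e\int_{Q_\nu}f\!\Big(\hat\chi_\e(y),\tfrac{1}{\e}\,\mathcal E\hat u_\e(y)\Big)\,dy+|D\hat\chi_\e|(Q_\nu),
\]
and the lower bound in \eqref{growth} together with the first paragraph yields $\alpha\|\mathcal E\hat u_\e\|_{L^1(Q_\nu)}=\alpha\,\e^{1-N}\|\mathcal E\tilde u_\e\|_{L^1(Q_\nu(x_0,\e))}\le\e^{1-N}F(\tilde\chi_\e,\tilde u_\e;Q_\nu(x_0,\e))\le C$, a bound uniform in $\e$.

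To conclude I would replace $f$ by $f^\infty$ quantitatively. Hypothesis \eqref{finfty}, used with $t=1/\e$ on $\{|\xi|>L\e\}$, together with the growth bounds \eqref{growth}, \eqref{finftygr} on the complementary set, gives the pointwise estimate $|\e f(q,\xi/\e)-f^\infty(q,\xi)|\le C(\e^\gamma|\xi|^{1-\gamma}+\e)$ for all $q\in\{0,1\}$, $\xi\in\Rb^{N\times N}_s$ and small $\e$. Integrating over $Q_\nu$ with $\xi=\mathcal E\hat u_\e(y)$, using H\"older's inequality and the uniform bound on $\|\mathcal E\hat u_\e\|_{L^1(Q_\nu)}$, the error is $O(\e^{\gamma'})$ for some $\gamma'>0$, whence
\[
\int_{Q_\nu}f^\infty(\hat\chi_\e,\mathcal E\hat u_\e)\,dy+|D\hat\chi_\e|(Q_\nu)\le\frac{\mathcal F(\chi,u;Q_\nu(x_0,\e))}{\e^{N-1}}+\e+O(\e^{\gamma'}).
\]
Taking the $\liminf$ as $\e\to0^+$ along a suitable sequence and using the first paragraph, the left-hand side is $\le g(x_0,a,b,c,d,\nu)$; since $(\hat\chi_\e,\hat u_\e)$ is admissible in the definition \eqref{tilK} of $\widetilde K(a,b,c,d,\nu)$, this gives $\widetilde K(a,b,c,d,\nu)\le g(x_0,a,b,c,d,\nu)$, and Proposition~\ref{Ktilde}, i.e. $K=\widetilde K$, then yields the claimed inequality.

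The main obstacle is that almost-minimisers of $m$ with the \emph{target} boundary data $\chi_0(\cdot-x_0)$, $u_0(\cdot-x_0)$ need not blow up to $(\chi_0,u_0)$, so they cannot be fed directly into the sequential formula $\widetilde K$. The route above circumvents this by passing through the density of the measure $\mathcal F(\chi,u;\cdot)$ (Lemmas~\ref{BFT310} and \ref{BFT312}) and by diagonalising the relaxing sequences of $\mathcal F(\chi,u;Q_\nu(x_0,\e))$ fast enough that, after the $1/\e$ rescaling, $L^1$-closeness to $\chi(x_0+\e\,\cdot)$, $u(x_0+\e\,\cdot)$ survives and the genuine blow-up relations \eqref{chitilde}--\eqref{upm} of $\chi$ and $u$ at $x_0$ take over. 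The other delicate point is the passage from $f$ to $f^\infty$, which is exactly what \eqref{finfty} is designed for; the vanishing of the error term relies on the uniform $L^1$ control of $\mathcal E\hat u_\e$ coming from coercivity and the linear bound $\mathcal F(\chi,u;Q_\nu(x_0,\e))\le C\e^{N-1}$.
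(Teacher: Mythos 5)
Your proof is correct, but it takes a genuinely different route from the paper's. The paper argues directly from the definition \eqref{defg} of $g$ as a rescaled limsup of the Dirichlet values $m$: using Lemma~\ref{BFT314} ($m=m_0$) it selects near-minimisers with the \emph{exact} pure-jump boundary data $\chi_0(\cdot-x_0)$, $u_0(\cdot-x_0)$ on $\partial Q_\nu(x_0,\e)$, rescales them to $Q_\nu$, where (precisely because of those boundary values) they belong to the admissible class $\mathcal{A}(a,b,c,d,\nu)$ and can be fed straight into the periodic cell formula \eqref{K}; the passage from $f$ to $f^\infty$ via \eqref{finfty}, coercivity and H\"older is then the same computation as yours (the paper's \eqref{limit0}). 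You instead identify $g(x_0,\dots)$ with $\lim_{\e\to0^+}\mathcal F(\chi,u;Q_\nu(x_0,\e))/\e^{N-1}$ through Lemmas~\ref{BFT310}, \ref{BFT312} and the blow-up properties \eqref{chitilde}--\eqref{perchi0} (i.e.\ the content of Proposition~\ref{mg}), diagonalise genuine recovery sequences of $\mathcal F$ on the shrinking cubes fast enough (the $\e^{N+1}$ threshold) that their rescalings converge in $L^1(Q_\nu)$ to $(\chi_0,u_0)$, and then invoke the sequential formula $\widetilde K$ and Proposition~\ref{Ktilde} ($K=\widetilde K$). What each route buys: the paper's argument needs Lemma~\ref{BFT314} but not Proposition~\ref{Ktilde} (which it reserves for the opposite inequality $g\le K$) and needs no blow-up information at $x_0$ beyond the definition of $(\chi^\pm,u^\pm,\nu)$; your argument dispenses with Lemma~\ref{BFT314} and with boundary-value bookkeeping altogether, at the price of relying on the density identification of Proposition~\ref{mg} (stated in the paper for $u\in SBD$, though, as you note, its proof uses nothing beyond what holds for $BD$, consistently with the rest of the subsection) and on $K=\widetilde K$. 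Your quantitative pointwise estimate $|\e f(q,\xi/\e)-f^\infty(q,\xi)|\le C(\e^\gamma|\xi|^{1-\gamma}+\e)$ and the uniform $L^1$ bound on $\mathcal E\hat u_\e$ coming from $\mathcal F(\chi,u;Q_\nu(x_0,\e))\le C\e^{N-1}$ are exactly the ingredients the paper also uses, so no gap there.
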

\begin{proof}

As before, for simplicity of notation, we write $\nu = \nu(x_0)$.

By Lemma~\ref{BFT314} we have
\begin{align*}
&g(x_0,\chi^+(x_0),\chi^-(x_0),u^+(x_0),u^-(x_0),\nu(x_0)) \\
& \qquad =\limsup_{\e \to 0^+}\frac{1}{\e^{N-1}}
\inf\Big\{F (\theta,v;Q_\nu(x_0,\e)) : 
\theta \in  BV(\O;\{0,1\}), v \in W^{1,1}(\O;\Rb^N), \\
& \hspace{3,5cm} \theta = \chi_0(\cdot - x_0) \mbox{ on } \partial Q_\nu(x_0,\e),
v = u_0(\cdot - x_0) \mbox{ on } \partial Q_\nu(x_0,\e)\Big\},
\end{align*}
where $\chi_0$ and $u_0$ are given by \eqref{targets} with $\nu = \nu(x_0)$ and $a=\chi^+(x_0)$, 
$b=\chi^-(x_0)$, $c=u^+(x_0)$ and $d=u^-(x_0)$, respectively. Thus, for every 
$n \in \Nb$, there exist $\theta_{n,\e} \in  BV(\O;\{0,1\})$, 
$v_{n,\e} \in W^{1,1}(\O;\Rb^N)$ such that $\theta_{n,\e} = \chi_0(\cdot - x_0)$ on $\partial Q_\nu(x_0,\e)$, $v_{n,\e} = u_0(\cdot - x_0)$ on $\partial Q_\nu(x_0,\e)$
and 
\begin{align}\label{g}
& g(x_0,\chi^+(x_0),\chi^-(x_0),u^+(x_0),u^-(x_0),\nu(x_0)) + \frac{1}{n} \nonumber\\
&\hspace{2cm}\geq \limsup_{\e \to 0^+}\frac{1}{\e^{N-1}}
\left[\int_{Q_{\nu}(x_0,\e)}f(\theta_{n,\e}(x), \E v_{n,\e}(x)) \, dx + |D\theta_{n,\e}|(Q_{\nu}(x_0,\e))\right] \nonumber\\
& \hspace{2cm}= \limsup_{\varepsilon \rightarrow 0^+}
\left[\int_{Q_{\nu}}\e f(\theta_{n,\e}(x_0+\e y), \E v_{n,\e}(x_0+\e y)) \, dy + 
\int_{Q_{\nu}\cap \frac{1}{\e}(J_{\theta_{n,\e}}-x_0)}d\mathcal H^{N-1}(y)\right] \nonumber\\
& \hspace{2cm}= \limsup_{\varepsilon \rightarrow 0^+}
\left[\int_{Q_{\nu}}\e f(\chi_{n,\e}(y), \frac{1}{\e}\E u_{n,\e}(y)) \, dy + 
|D\chi_{n,\e}|(Q_{\nu})\right]  \nonumber\\
& \hspace{2cm} \geq  \liminf_{\varepsilon \rightarrow 0^+}
\left[\int_{Q_{\nu}}f^{\infty}(\chi_{n,\e}(y),\E u_{n,\e}(y)) \, dy + 
|D\chi_{n,\e}|(Q_{\nu})\right]  \nonumber\\
& \hspace{2cm} +  \liminf_{\varepsilon \rightarrow 0^+}
\int_{Q_{\nu}}\left[\e f(\chi_{n,\e}(y), \frac{1}{\e}\E u_{n,\e}(y))  -
f^{\infty}(\chi_{n,\e}(y),\E u_{n,\e}(y))\right]  dy,
\end{align}
where $\chi_{n,\e}(y) = \theta_{n,\e}(x_0 + \e y)$ and 
$u_{n,\e}(y) = v_{n,\e}(x_0+ \e y)$.
We claim that 
\begin{equation}\label{limit0}
\liminf_{\e \to 0^+}
\int_{Q_{\nu}}\left[\e f(\chi_{n,\e}(y), \frac{1}{\e}\E u_{n,\e}(y))- f^{\infty}(\chi_{n,\e}(y),\E u_{n,\e}(y))\right]  dy  = 0.
\end{equation}
If so, noticing that $(\chi_{n,\e},u_{n,\e}) \in \mathcal{A}(\chi^+(x_0),\chi^-(x_0),u^+(x_0),u^-(x_0),\nu(x_0))$, we have
from \eqref{g}, \eqref{limit0} and the definition of $K(a,b,c,d,\nu)$,
\begin{align*}
&g(x_0,\chi^+(x_0),\chi^-(x_0),u^+(x_0),u^-(x_0),\nu(x_0)) + \frac{1}{n} \\
&\hspace{3cm}
\geq \liminf_{\e \to 0^+}
\left[\int_{Q_{\nu}}f^{\infty}(\chi_{n,\e}(y),\E u_{n,\e}(y)) \, dy + 
|D\chi_{n,\e}(Q_{\nu})\right] \\
&\hspace{3cm}\geq K(\chi^+(x_0),\chi^-(x_0),u^+(x_0),u^-(x_0),\nu(x_0)),
\end{align*}
hence the result follows by letting $n \to +\infty$.

It remains to prove \eqref{limit0}. We write
\begin{align*}
& \int_{Q_{\nu}}\e f(\chi_{n,\e}(y), \frac{1}{\e}\E u_{n,\e}(y))- f^{\infty}(\chi_{n,\e}(y),\E u_{n,\e}(y)) \, dy \\
& = \int_{Q_{\nu}\cap \{\frac{1}{\e}|\E u_{n,\e}(y)| \leq L\}}\e f(\chi_{n,\e}(y), \frac{1}{\e}\E u_{n,\e}(y))- f^{\infty}(\chi_{n,\e}(y),\E u_{n,\e}(y)) \, dy \\
& + \int_{Q_{\nu}\cap \{\frac{1}{\e}|\E u_{n,\e}(y)| > L\}}\e f(\chi_{n,\e}(y), \frac{1}{\e}\E u_{n,\e}(y))- f^{\infty}(\chi_{n,\e}(y),\E u_{n,\e}(y)) \, dy
= : I_1 + I_2.
\end{align*}
By the growth hypothesis \eqref{growth} and \eqref{finftygr} we have
\begin{align*}
|I_1| &\leq \int_{Q_{\nu}\cap \{|\E u_{n,\e}(y)| \leq \e L\}}
\e \, C \left(1 + \frac{1}{\e}|\E u_{n,\e}(y)|\right) + C \, |\E u_{n,\e}(y)| \, dy \\
& \leq \int_{Q_{\nu}}\e \, C \, dy = O(\e)
\end{align*}
and, by hypothesis \eqref{finfty} with $t = \frac{1}{\e}$, H\"older's inequality and
\eqref{growth},
\begin{align*}
|I_2| &\leq \int_{Q_{\nu}\cap \{\frac{1}{\e}|\E u_{n,\e}(y)| > L\}}
\left|\e f(\chi_{n,\e}(y), \frac{1}{\e}\E u_{n,\e}(y))- 
f^{\infty}(\chi_{n,\e}(y),\E u_{n,\e}(y))\right| \, dy \\
& \leq \int_{Q_{\nu}\cap \{\frac{1}{\e}|\E u_{n,\e}(y)| > L\}}
C \, \e^{\gamma} \, |\E u_{n,\e}(y)|^{1-\gamma} \, dy \\
& \leq C \, \e^{\gamma}
\left( \int_{Q_{\nu}}|\E u_{n,\e}(y)| \, dy\right)^{1-\gamma}\\
& \leq C \, \e^{\gamma}
\left( \int_{Q_{\nu}}\e \,f(\chi_{n,\e}(y),\frac{1}{\e}\E u_{n,\e}(y)) \, dy\right)^{1-\gamma} = O(\e^{\gamma}),
\end{align*}
since the integral in the last expression is uniformly bounded by \eqref{g}. The above estimates yield \eqref{limit0} and complete the proof.
\end{proof}

\begin{proposition}
Let $f$ be given by \eqref{density}, where $W_0$ and $W_1$ are continuous functions satisfying \eqref{growth}, and assume that $f$ is symmetric quasiconvex
and that \eqref{finfty} holds.
Given $u \in BD(\Omega)$ and $\chi \in BV(\Omega;\{0,1\})$, 
for $\mathcal H^{N-1}$ a.e. $x_0 \in \Omega \cap (J_\chi \cup J_u)$ we have
$$g(x_0,\chi^+(x_0),\chi^-(x_0),u^+(x_0),u^-(x_0),\nu(x_0)) \leq
K(\chi^+(x_0),\chi^-(x_0),u^+(x_0),u^-(x_0),\nu(x_0)).$$
\end{proposition}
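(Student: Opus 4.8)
The plan is to establish the bound $g(x_0,a,b,c,d,\nu)\le K(a,b,c,d,\nu)$ for an arbitrary admissible quintuple $(a,b,c,d,\nu)$ — in particular for $a=\chi^+(x_0),\,b=\chi^-(x_0),\,c=u^+(x_0),\,d=u^-(x_0),\,\nu=\nu(x_0)$, which are well defined for $\mathcal H^{N-1}$ a.e. $x_0\in\O\cap(J_\chi\cup J_u)$ by the discussion opening this subsection. Write $\chi_0:=\chi_{a,b,\nu}$, $u_0:=u_{c,d,\nu}$ and fix $\eta>0$. First I would use Proposition~\ref{Ktilde} ($K=\widetilde K$) to pick $\chi_n\in BV(Q_\nu;\{0,1\})$ and $u_n\in W^{1,1}(Q_\nu;\Rb^N)$ with $\chi_n\to\chi_0$, $u_n\to u_0$ in $L^1(Q_\nu)$ and, after extracting a subsequence, $\int_{Q_\nu}f^\infty(\chi_n,\E u_n)\,dy+|D\chi_n|(Q_\nu)\le K(a,b,c,d,\nu)+2\eta$ for every $n$; by \eqref{finftygr} this makes $\{u_n\}$ bounded in $LD(Q_\nu)$ and $\{\chi_n\}$ bounded in $BV(Q_\nu)$.

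Next — and this is the crux — I would adjust these competitors near $\partial Q_\nu$ so that they coincide there with the target datum. Applying Theorem~\ref{densitysmooth} to $u_0\in BD(Q_\nu)$ gives $v_n\in C^\infty(Q_\nu;\Rb^N)\cap W^{1,1}(Q_\nu;\Rb^N)$ with $v_n\stackrel{i}{\to}u_0$ and ${\rm tr}_{\partial Q_\nu}v_n={\rm tr}_{\partial Q_\nu}u_0$; then $v_n\to u_0$ in $L^1(Q_\nu)$, $|Ev_n|(Q_\nu)\to|Eu_0|(Q_\nu)$ and $|Ev_n|\overset{\ast}{\rightharpoonup}|Eu_0|$. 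Since $f^\infty$ is continuous (by \eqref{finfty2}) and satisfies \eqref{growth} by \eqref{finftygr}, the argument of Proposition~\ref{newslicing} applies to the recession functional $F^\infty(\chi,u;A):=\int_A f^\infty(\chi,\E u)\,dx+|D\chi|(A)$; I would invoke it (so adapted) on $A=Q_\nu$ with the sequences $\{\chi_n\}$, $\{\theta_n\equiv\chi_0\}$, $\{u_n\}$, $\{v_n\}$ — all its hypotheses hold, since $u_n-v_n\to0$ and $\chi_n-\theta_n\to0$ in $L^1$, $E^su_n=0$, $|Ev_n|\overset{\ast}{\rightharpoonup}|Eu_0|$ with convergence of masses, and $\{D\chi_n\},\{D\theta_n\}$ are bounded — obtaining $w_k\in W^{1,1}(Q_\nu;\Rb^N)$, $\eta_k\in BV(Q_\nu;\{0,1\})$ with $w_k=v_{n_k}$ and $\eta_k=\chi_0$ in a neighbourhood of $\partial Q_\nu$ (hence ${\rm tr}_{\partial Q_\nu}w_k={\rm tr}_{\partial Q_\nu}u_0$ and ${\rm tr}_{\partial Q_\nu}\eta_k={\rm tr}_{\partial Q_\nu}\chi_0$), with $w_k\to u_0$, $\eta_k\to\chi_0$ in $L^1(Q_\nu)$, and $\limsup_k F^\infty(\eta_k,w_k;Q_\nu)\le\liminf_n F^\infty(\chi_n,u_n;Q_\nu)\le K(a,b,c,d,\nu)+2\eta$. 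I fix $k$ with $F^\infty(\eta_k,w_k;Q_\nu)\le K(a,b,c,d,\nu)+3\eta$ and set $(\zeta,z):=(\eta_k,w_k)$.

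Then I would rescale and glue. For $\e>0$ small enough that $Q_\nu(x_0,\e)\subset\O$, define on $\O$
$$\hat\theta_\e:=\begin{cases}\zeta\big(\tfrac{\,\cdot\,-x_0}{\e}\big)&\text{in }Q_\nu(x_0,\e),\\ \chi_0(\,\cdot\,-x_0)&\text{in }\O\setminus Q_\nu(x_0,\e),\end{cases}\qquad\hat z_\e:=\begin{cases}z\big(\tfrac{\,\cdot\,-x_0}{\e}\big)&\text{in }Q_\nu(x_0,\e),\\ u_0(\,\cdot\,-x_0)&\text{in }\O\setminus Q_\nu(x_0,\e).\end{cases}$$
Because $\zeta=\chi_0$ near $\partial Q_\nu$ and ${\rm tr}_{\partial Q_\nu}z={\rm tr}_{\partial Q_\nu}u_0$, the two pieces match on $\partial Q_\nu(x_0,\e)$ — pointwise for $\hat\theta_\e$, in the trace sense for $\hat z_\e$ — so $\hat\theta_\e\in BV(\O;\{0,1\})$, $\hat z_\e\in BD(\O)$, no interface arises on $\partial Q_\nu(x_0,\e)$, and $(\hat\theta_\e,\hat z_\e)$ is admissible for $m(\chi_0(\,\cdot\,-x_0),u_0(\,\cdot\,-x_0);Q_\nu(x_0,\e))$. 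Since $z\in W^{1,1}$ forces $\hat z_\e\in W^{1,1}(Q_\nu(x_0,\e);\Rb^N)$, one has $\mathcal F(\hat\theta_\e,\hat z_\e;Q_\nu(x_0,\e))\le F(\hat\theta_\e,\hat z_\e;Q_\nu(x_0,\e))$, and the change of variables $x=x_0+\e y$ gives
$$\frac{m(\chi_0(\,\cdot\,-x_0),u_0(\,\cdot\,-x_0);Q_\nu(x_0,\e))}{\e^{N-1}}\le\int_{Q_\nu}\e\,f\big(\zeta(y),\tfrac1\e\E z(y)\big)\,dy+|D\zeta|(Q_\nu).$$
As $\e\to0^+$ I would split $Q_\nu$ according to whether $\tfrac1\e|\E z(y)|\le L$ or $>L$ and argue exactly as for the terms $I_1,I_2$ in the proof of the previous proposition — using \eqref{growth}, \eqref{finftygr} on the first set and \eqref{finfty} with $t=1/\e$ together with H\"older's inequality on the second, recalling $\int_{Q_\nu}|\E z|<+\infty$ — so that $\int_{Q_\nu}\e f(\zeta,\tfrac1\e\E z)\,dy\to\int_{Q_\nu}f^\infty(\zeta,\E z)\,dy$. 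Hence $g(x_0,a,b,c,d,\nu)=\limsup_{\e\to0^+}\e^{1-N}\,m(\chi_0(\,\cdot\,-x_0),u_0(\,\cdot\,-x_0);Q_\nu(x_0,\e))\le\int_{Q_\nu}f^\infty(\zeta,\E z)\,dy+|D\zeta|(Q_\nu)\le K(a,b,c,d,\nu)+3\eta$, and letting $\eta\to0^+$ finishes the proof.

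I expect the boundary adjustment of the second paragraph to be the main obstacle: the free $\widetilde K$-competitor must be replaced by one that literally equals $(\chi_0,u_0)$ near $\partial Q_\nu$, for otherwise the gluing with the Dirichlet datum after rescaling would create an $(N-1)$-dimensional interface on which neither $f^\infty$ nor the perimeter vanishes, spoiling the estimate; doing this at no cost in the recession energy, and simultaneously for the two fields, is precisely what the combination of Theorem~\ref{densitysmooth} and the slicing Proposition~\ref{newslicing} (applied to $F^\infty$) delivers. The remaining passage from $f^\infty$ to the $\e$-rescaled $f$ is routine, being word-for-word the computation already carried out in the preceding proposition.
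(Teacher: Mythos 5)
Your argument is correct, but it reaches the inequality by a genuinely different mechanism than the paper. You build an explicit competitor for $m(\chi_0(\cdot-x_0),u_0(\cdot-x_0);Q_\nu(x_0,\varepsilon))$: you adjust the $\widetilde K$-recovery sequence near $\partial Q_\nu$ (Theorem~\ref{densitysmooth} for $u_0$ plus Proposition~\ref{newslicing} applied with the densities $W_i^\infty$ in place of $W_i$ --- a legitimate application, since $W_i^\infty$ are continuous, indeed Lipschitz under the standing symmetric quasiconvexity, and satisfy \eqref{growth}), then rescale, glue with $(\chi_0(\cdot-x_0),u_0(\cdot-x_0))$ outside the small cube, and pass from $\varepsilon f(\cdot,\tfrac1\varepsilon\,\cdot)$ to $f^\infty$ by the same $I_1/I_2$ estimate based on \eqref{finfty}. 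The paper instead avoids any boundary modification: it rescales the $\widetilde K$-sequence into $Q_\nu(x_0,\varepsilon)$, uses positive homogeneity of $f^\infty$ and the recession estimate to pass to $f$, and then observes that, since the rescaled pairs converge in $L^1$ to $(\chi_0(\cdot-x_0),u_0(\cdot-x_0))$, the $\liminf$ of their energies dominates $\mathcal{F}(\chi_0(\cdot-x_0),u_0(\cdot-x_0);Q_\nu(x_0,\varepsilon))$ by the very definition of the relaxed functional (no boundary constraint enters), which in turn dominates $m(\chi_0(\cdot-x_0),u_0(\cdot-x_0);Q_\nu(x_0,\varepsilon))$ because the target pair is itself admissible for its own Dirichlet datum. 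So the step you identify as "the main obstacle" --- matching the competitor to $(\chi_0,u_0)$ near $\partial Q_\nu$ at no cost in recession energy --- is in fact unnecessary in the paper's route; your construction handles it correctly, at the price of invoking the slicing machinery for $F^\infty$, and buys in exchange a slightly more transparent statement (an explicit quasi-minimiser for each $m$, with the $\varepsilon$-limit taken along a single fixed pair $(\zeta,z)$ rather than interchanged with a $\liminf$ in $n$). Both proofs rest on Proposition~\ref{Ktilde} and on the computation already used for \eqref{limit0}, so the overlap in ingredients is substantial; only the passage from the $K$-competitor to $m$ differs.
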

\begin{proof}
Using the sequential characterisation of $K$ given in Proposition \ref{Ktilde}, let 
$\chi_n \in BV(Q_\nu;\{0,1\})$, $u_n \in W^{1,1}(Q_\nu;\Rb^N)$ be such that
$\chi_n \to \chi_0$ in $L^1(Q_\nu;\{0,1\})$, $u_n \to u_0$ in $L^1(Q_\nu;\Rb^N)$
and 
$$K(\chi^+(x_0),\chi^-(x_0),u^+(x_0),u^-(x_0),\nu(x_0)) = 
\lim_{n\rightarrow+\infty}
\left[ \int_{Q_\nu} f^\infty(\chi_n(y),\E u_n(y)) \, dy + |D\chi_n|(Q_\nu)\right],$$
where $\chi_0$, $u_0$ are as in the proof of Proposition~\ref{mg}. 

For $x \in Q_\nu(x_0,\e)$, set 
$\displaystyle \theta_n(x) := \chi_n\left(\frac{x - x_0}{\e}\right)$ and 
$\displaystyle v_n(x) := u_n\left(\frac{x - x_0}{\e}\right)$. Then, changing variables and using the positive homogeneity of $f^\infty(q,\cdot)$, we have
\begin{align}\label{Kg1}
&K(\chi^+(x_0),\chi^-(x_0),u^+(x_0),u^-(x_0),\nu(x_0)) =
\lim_{n\rightarrow+\infty}
\left[ \int_{Q_\nu} f^\infty(\chi_n(y),\E u_n(y)) \, dy + |D\chi_n|(Q_\nu)\right] \nonumber \\
& \hspace{2cm}= \frac{1}{\e^{N-1}}\lim_{n\rightarrow+\infty}
\left[ \int_{Q_\nu(x_0,\e)} f^\infty(\theta_n(x),\E v_n(x)) \, dx + |D\theta_n|(Q_\nu(x_0,\e))\right] \nonumber \\
& \hspace{2cm}\geq \frac{1}{\e^{N-1}}\liminf_{n\rightarrow+\infty}
\left[ \int_{Q_\nu(x_0,\e)} f(\theta_n(x),\E v_n(x)) \, dx + |D\theta_n|(Q_\nu(x_0,\e))\right] \nonumber \\
& \hspace{2cm} + \frac{1}{\e^{N-1}}\liminf_{n\rightarrow+\infty}
\int_{Q_\nu(x_0,\e)} \Big(f^\infty(\theta_n(x),\E v_n(x)) - f(\theta_n(x),\E v_n(x))\Big)\, dx =: I_1 + I_2.
\end{align}
Given that $\chi_n \to \chi_0$ in $L^1(Q_\nu;\{0,1\})$ and $u_n \to u_0$ in $L^1(Q_\nu;\Rb^N)$, it follows that $\theta_n \to \chi_0(\cdot - x_0)$ in $L^1(Q_\nu(x_0,\e);\{0,1\})$ and $v_n \to u_0(\cdot - x_0)$ in $L^1(Q_\nu(x_0,\e);\Rb^N)$. Thus, 
\begin{equation}\label{Kg2}
I_1 \geq \frac{1}{\e^{N-1}}
\mathcal F(\chi_0(\cdot - x_0),u_0(\cdot - x_0);Q_\nu(x_0,\e)) 
\geq \frac{1}{\e^{N-1}}
m(\chi_0(\cdot - x_0),u_0(\cdot - x_0);Q_\nu(x_0,\e)). 
\end{equation}
On the other hand, the same calculations that were used to prove \eqref{limit0} by means
of hypothesis \eqref{finfty} allow us to conclude that
\begin{equation}\label{Kg3}
\limsup_{\varepsilon \to 0^+}I_2 = 0.
\end{equation}
Hence, from \eqref{Kg1}, \eqref{Kg2} and \eqref{Kg3} we obtain
$$K(\chi^+(x_0),\chi^-(x_0),u^+(x_0),u^-(x_0),\nu(x_0)) \geq 
g(x_0,\chi^+(x_0),\chi^-(x_0),u^+(x_0),u^-(x_0),\nu(x_0)).$$
\end{proof}

\medskip

\noindent\textbf{Acknowledgements}.
The research of ACB was partially supported by National Funding from FCT -
Funda\c c\~ao para a Ci\^encia e a Tecnologia through project UIDB/04561/2020.
JM acknowledges support from CAMGSD and from the Fundação para a Ciência e a Tecnologia through the grant UID/MAT/04459/2020.
EZ is a member of INdAM GNAMPA, whose support is gratefully acknowledged, also through the GNAMPA project 2020, coordinated by Prof. Marco Bonacini. JM and EZ thank the Isaac
Newton Institute for Mathematical Sciences (Cambridge) for its support and hospitality during the programme DNM (Design of New Materials), where this research project started,
supported by EPSRC grant no EP/R014604/1.

\color{black}


\begin{thebibliography}{bib}



\bibitem{AL}
\newblock \textsc{G. Allaire \& V. Lods},
\newblock Minimizers for a double-well problem with affine boundary conditions, 
\newblock Proc. Royal Soc. Edinburgh {\bf 129 A},  n. 3, (1999), 439-466.

\bibitem{ABper}
\newblock \textsc{L. Ambrosio \& G. Buttazzo},
\newblock An optimal design problem with perimeter penalization, 
\newblock Calc. Var. Partial Differential Equations {\bf 1} (1993), 55-69.


\bibitem{AFP}
\newblock \textsc{L. Ambrosio, N. Fusco \& D. Pallara},
\newblock {\em Functions of Bounded Variation and Free
Discontinuity Problems}, Oxford University Press, 2000.

\bibitem{ACDM} \textsc{L. Ambrosio, A. Coscia \& G. Dal Maso}, Fine properties of functions in BD, Arch. Rational Mech. Anal. {\bf 139} (1997), 201-238.



\bibitem{ARDPR}
\textsc{A. Arroyo-Rabasa, G. De Philippis \& F. Rindler},
Lower semicontinuity and relaxation of linear-growth integral
functionals under PDE's constraints,
Adv. Calc. Var. {\bf 13}, n. 3, (2020), 219-255.

\bibitem{B}
\textsc{J.-F. Babadjian},
Traces of functions of bounded deformation,
Indiana Univ. Math. J. {\bf 64}, n. 4, (2015), 1271-1290.

\bibitem{BIR} \textsc{J.-F. Babadjian, F. Iurlano \& F. Rindler},
Concentration versus oscillation effects in brittle damage, 
Comm. Pure Appl.  {\bf 74}, n. 9, (2021), 1803-1854.

\bibitem{BIR2} \textsc{J.-F. Babadjian, F. Iurlano \& F. Rindler},
Shape optimization of light structures and the vanishing mass conjecture, arXiv:2102.09911v1.


\bibitem{BA} \textsc{S. Baldo},
Minimal interface criterion for phase transitions in mixtures of Cahn-Hilliard fluids, Ann. Inst. Henri Poincar\'e. Anal. Non Lin\'{e}aire {\bf 7} (1990), 67-90.

\bibitem{BM}
\newblock \textsc{J. M. Ball \& F. Murat},
\newblock $W^{1,p}$-quasiconvexity and variational problems for multiple integrals,
\newblock Journal of Functional Analysis {\bf 58} (1984), 225-253.

\bibitem{BBBF} \textsc{A. C. Barroso, G. Bouchitté, G. Buttazzo \& I. Fonseca},
Relaxation of bulk and interfacial energies,  Arch. Rational Mech. Anal. {\bf 135} (1996), 107-173.
	
\bibitem{BFT} \textsc{A. C. Barroso, I. Fonseca \& R. Toader}, A relaxation theorem in the space of functions of bounded deformation, Ann. Scuola Norm. Sup. Pisa Cl. Sci. {\bf 29} (2000), 19-49.

\bibitem{BMMO}
\textsc{A. C. Barroso, J. Matias, M. Morandotti \& D. Owen},
\newblock Second-order structured deformations: relaxation, integral representation and applications,
\newblock Arch. Rational Mech. Anal. {\bf 225} (2017), 1025-1072.
	
\bibitem{BZ} \textsc{A. C. Barroso \& E. Zappale}, Relaxation for optimal design problems with non-standard growth, Appl. Math. Optim.
{\bf 80},  n. 2, (2019), 515-546.

\bibitem{BZ2} \textsc{A. C. Barroso \& E. Zappale},
An optimal design problem with non-standard growth and no concentration effects, Asympt. Anal. pre-press (2021), doi:10.3233/ASY-211711.

\bibitem{BCDM} \textsc{G. Bellettini, A. Coscia \& G. Dal Maso}, Special functions of bounded deformation,
Math. Z. {\bf 228} (1998), 337-351. 

\bibitem{BFM}
\textsc{G. Bouchitt\'e, I. Fonseca \& M. L. Mascarenhas},
\newblock  A global method for relaxation, 
\newblock Arch. Rational Mech. Anal. {\bf 145} (1998), 51-98.
		
\bibitem{BDV} \textsc{A. Braides, A. Defranceschi \& E. Vitali}, A relaxation approach to Hencky’s plasticity,
Appl. Math. Optimization {\bf 35} (1997), 45-68.


\bibitem{BDG}
\textsc{D. Breit, L. Diening \& F. Gmeineder},
On the trace operator for functions of bounded {$\Bbb
	A$}-variation, Anal. PDE.
 {\bf 13}, n. 2, (2020), 559-594.



\bibitem{CZ0}
\textsc{G. Carita \& E. Zappale},
\newblock $3{D}-2{D}$ dimensional reduction for a nonlinear optimal design problem with perimeter penalization, C. R. Math. Acad. Sci. Paris  {\bf 350}, 
n. 23-24, (2012), 1011-1016.

\bibitem{CZ}
\textsc{G. Carita \& E. Zappale},
\newblock Relaxation for an optimal design problem with linear growth and perimeter penalization,
\newblock Proc. Royal Soc. Edinburgh A \textbf{145} (2015), 223-268.

\bibitem{CZasy}
\textsc{G. Carita \& E. Zappale},
\newblock A relaxation result in $ BV \times L^p$ for integral functionals depending on chemical composition and elastic strain.
\newblock Asymp. Anal. \textbf{100},  1-2, (2016), 1-20.

\bibitem{CFVG} \textsc{M. Caroccia, M. Focardi \& N. Van Goethem}, 
On integral representation of va\-ri\-a\-tional func\-tionals on $BD$,	
SIAM J. Math. Anal. {\bf 52}, n. 4, (2020), 4022-4067. 

\bibitem{CFI} \textsc{S. Conti, M. Focardi \& F. Iurlano}, 
Integral representation for func\-tionals defined on $SBD^p$ in dimension two,	
Arch. Rational Mech. Anal. {\bf 223} (2017), 1337-1374.

\bibitem{DPR0}\textsc{G. De Philippis \& F. Rindler}, 
On the structure of $\mathcal A$-free measures and applications, Annals of Mathematics, Second Series, {\bf 184}, n. 3, (2016), 1017-1039.
	

\bibitem{DPR2}\textsc{G. De Philippis \& F. Rindler},
Fine properties of functions of bounded deformation - an
approach via linear PDEs,
Math. Eng. {\bf 2}, n. 3, (2020), 386-422.
	
\bibitem{E} \textsc{F. Ebobisse}, On lower semicontinuity of integral functionals in $LD(\Omega)$, 
Ricerche Mat. {\bf 49} (2000), 65-76.

\bibitem{E2} \textsc{F. Ebobisse}, A lower semicontinuity result for some integral functionals in the space $SBD$,
Nonlinear Anal. {\bf 62} (2005), 1333-1351.

\bibitem{ET} \textsc{F. Ebobisse \& R. Toader}, A note on the integral representation of functionals in the space $SBD(\Omega)$,
Rendiconti di Matematica, Serie VII, {\bf 23} (2003), 189-201.

\bibitem{EG} 
\textsc{L. C. Evans \& R. Gariepy},
\newblock {\em Measure Theory and Fine Properties of Functions}, CRC Press, 1992.

\bibitem{FM1} \textsc{I. Fonseca \& S. M\"uller},  Quasiconvex integrands and lower semicontinuity in $L^1$, 
SIAM J. Math. Anal. {\bf 23} (1992), 1081-1098.

\bibitem{FM} \textsc{I. Fonseca \& S. M\"uller},  Relaxation of quasiconvex functionals in $BV(\Omega, \Rb^p)$ for integrands $f(x,u,\nabla u)$, 
Arch. Ration. Mech. Anal. {\bf 123} (1993), 1-49.
		


\bibitem{JS}
\textsc{M. Jesenko \& B. Schmidt},
Homogenization and the limit of vanishing hardening in
{H}encky plasticity with non-convex potentials,
{Calc. Var. Partial Differential Equations}
{\bf 57}, n. 1, (2018),  Paper No. 2, 43.

\bibitem{K} \textsc{R. V. Kohn}, {\em New Estimates for Deformations in Terms of Their Strains}, Ph.D. Thesis, Princeton University, 1979.

\bibitem{KL}
\newblock \textsc{R.V. Kohn \& F. H. Lin},
\newblock Partial regularity for optimal design problems involving both
bulk and surface energies,
\newblock Chinese Ann. Math. Ser. B, {\bf 20}, no. 2,
(1999), 137-158.

\bibitem{KS1}
\textsc{R.V. Kohn \& G. Strang},
Optimal design and relaxation of variational problems I,
Commun. Pure Appl. Math. {\bf 39} (1986), 113-137.

\bibitem{KS2}
\textsc{R.V. Kohn \& G. Strang},
Optimal design and relaxation of variational problems II,
Commun. Pure Appl. Math. {\bf 39} (1986), 139-182.

\bibitem{KS3}
\textsc{R.V. Kohn \& G. Strang},
Optimal design and relaxation of variational problems III,
Commun. Pure Appl. Math. {\bf 39} (1986), 353-377.

\bibitem{KR}
\textsc{K. Kosiba \& F. Rindler},
On the relaxation of integral functionals depending on the symmetrized gradient, {Proc. Roy. Soc. Edinburgh Sect. A} {\bf 151},  n. 2, (2021),
473-508.


\bibitem{MMS}
\textsc{J. Matias, M. Morandotti \& P. M. Santos},
{Homogenization of Functionals with Linear Growth in the Context of A-quasiconvexity},
{Applied Mathematics and Optimization} {\bf 72}, n. 3, (2015),
 {523-547}.

\bibitem{MMZ}
\textsc{J. Matias, M. Morandotti \& E. Zappale},
{Optimal design of fractured media with prescribed macroscopic strain}, {Journal of Mathematical Analysis and Applications} {\bf 449},
n. 2, (2017), {1094-1132}.

\bibitem{Mo}
\textsc{M. G. Mora},
Relaxation of the {H}encky model in perfect plasticity,
{J. Math. Pures Appl. (9)}, {\bf 106}, n. 4, (2016), 725-743.


\bibitem{MT}
\textsc{F. Murat \& L. Tartar},
Calcul des variations et homog\'en\'eisation,
Homogenization methods: theory and applications in physics, 
Collect. Dir. \'Etudes Rech. \'Elec. France,
{\bf 57} (1985) 319-369,
\newblock Eyrolles, Paris.

\bibitem{O} \textsc{D. A. Ornstein}, Non-inequality for differential operators in the $L^1$-norm,
Arch. Rat. Mech. Anal. {\bf 11} (1962), 40-49.

\bibitem{RZ}
\textsc{A. M. Ribeiro \& E. Zappale},
Relaxation of certain integral functionals depending on strain
and chemical composition,
Chin. Ann. Math. Ser. B
{\bf 34}, n. 4, (2013), 491-514.

\bibitem{R}
\textsc{F. Rindler},
Lower semicontinuity for integral functionals in the space of
functions of bounded deformation via rigidity and Young
measures, Arch. Ration. Mech. Anal.
{\bf 202}, n. 1, (2011), 63-113.
	
\bibitem{S1}
\textsc{P.- M.	Suquet},
{Sur les \'{e}quations de la plasticit\'{e}: existence et r\'{e}gularit\'{e}
des solutions}, {J. M\'{e}canique} {\bf 20}, n. 1,
(1981), {3-39}.

\bibitem{S2}
\textsc{P.- M.	Suquet}, {Un espace fonctionnel pour les \'{e}quations de la plasticit\'{e}},
{Ann. Fac. Sci. Toulouse Math. (5)}, {\bf 1},  n. 1,
(1979), {77-87}.

\bibitem{S3}
\textsc{P.- M.	Suquet},
{Existence et r\'{e}gularit\'{e} des solutions des \'{e}quations de la
plasticit\'{e}},{C. R. Acad. Sci. Paris S\'{e}r. A-B} {\bf 286}, n. 24, (1978),
{A1201-A1204}.

\bibitem{S4}
\textsc{P.- M.	Suquet},
{Sur un nouveau cadre fonctionnel pour les \'{e}quations de la
plasticit\'{e}},  {C. R. Acad. Sci. Paris S\'{e}r. A-B}
{\bf 286}, n. 23, (1978), {A1129-A1132}.

\bibitem{T} \textsc{R. Temam}, {\it Probl\`emes Math\'ematiques en Plasticit\'e}, Gauthier-Villars, 1983.
	
\bibitem{TS} \textsc{R. Temam \& G. Strang}, Functions of bounded  deformation, Arch. Rat. Mech. Anal. {\bf 75} (1980), 7-21.
	
\end{thebibliography}
\end{document}